\tikzstyle{EDR}=[draw=lightgray,line width=0pt,preaction={clip, postaction={pattern=north east lines, pattern color=gray}}]
\tikzstyle{EDR1}=[draw=lightgray,line width=0pt,preaction={clip, postaction={pattern=north west lines, pattern color=gray}}]
\newcommand{\dist}{\operatorname{dist}}
\definecolor{mygray}{gray}{0.95}
\definecolor{mypink1}{rgb}{1.2,1.1,0.9}
\definecolor{mypink2}{rgb}{1.0,0.95 ,0.9}
\definecolor{mypink3}{rgb}{1.0,0.6,0.7}
\numberwithin{equation}{section}
\newtheorem{theorem}{Theorem}[section]
\newtheorem{definition}[theorem]{Definition}
\newtheorem{lemma}[theorem]{Lemma}
\newtheorem{corollary}[theorem]{Corollary}
\newtheorem{conjecture}[theorem]{Conjecture}
\newtheorem{proposition}[theorem]{Proposition}
\newtheorem{example}[theorem]{Example}
\theoremstyle{remark}
\newtheorem{remark}{Remark}
\numberwithin{equation}{section}
\newcommand{\R}{\mathbb R}
\newcommand{\dif}{\mathrm{d}}
\newcommand{\beq}{\begin{equation}}
	\newcommand{\eeq}{\end{equation}}
\newcommand{\beqq}{\begin{equation*}}
	\newcommand{\eeqq}{\end{equation*}}
\newcommand{\ben}{\begin{eqnarray}}
	\newcommand{\een}{\end{eqnarray}}
\newcommand{\beno}{\begin{eqnarray*}}
	\newcommand{\eeno}{\end{eqnarray*}}
\def \f{\frac}
\begin{document}

	\title[A type  of oscillatory integral operator and its applications]
	{A type  of oscillatory integral operator and its applications}

	\author[C. Gao]{Chuanwei Gao}
	\address{Beijing International Center for Mathematical Research, Peking University, Beijing, China}
	\email{cwgao@pku.edu.cn}
	
	\author[J.Li]{Jingyue Li}
\address{Institute of Applied Physics and Computational Mathematics, Beijing 100088, P. R. China}
\email{m\_lijingyue@163.com}

	\author[L. Wang]{Liang Wang}
	\address{
		School of Mathematics and Statistics\\
		Wuhan University\\
		Wuhan 430072\\
		China}
	\email{wlmath@whu.edu.cn}

	\subjclass[2010]{Primary:42B10, Secondary: 42B20}
	
	\keywords{H\"ormander-type operator; Polynomial partitioning; $k$-broad ``norm"}

	\begin{abstract}
		In this paper, we consider  $L^p$- estimate for a class of oscillatory integral operators satisfying the Carleson-Sj\"olin conditions with further convex  and straight assumptions. As applications, the multiplier problem related to a general class of hypersurfaces with nonvanishing Gaussian curvature, local smoothing estimates for the fractional Schr\"odinger equation  and the sharp resolvent estimates outside of the uniform boundedness range  are discussed.
	\end{abstract}
	
	\maketitle
	
\section{Introduction}	
Let $n \geq 2$, $a \in C_c^{\infty}(\R^n \times \R^{n-1})$ be non-negative and supported in $B^n_1(0) \times B^{n-1}_1(0)$ and $\phi \colon B^n_1(0) \times B^{n-1}_1(0) \to \R$ be a smooth function which satisfies the following Carleson-Sj\"olin conditions:
\begin{itemize}
	\item[H1)] $\mathrm{rank}\, \partial_{\xi x}^2 \phi(x,\xi) = n-1$ for all $(x,\xi) \in B^n _1(0)\times B^{n-1}_1(0)$;
	\item[H2)] Defining the map $G \colon B^n_1(0) \times B^{n-1}_1(0) \to S^{n-1}$ by $G(x,\xi) := \frac{G_0(x,\xi)}{|G_0(x,\xi) |}$ where
	\begin{equation*}
		G_0(x,\xi) := \bigwedge_{j=1}^{n-1} \partial_{\xi_j} \partial_x\phi(x,\xi),
	\end{equation*}
	the curvature condition
	\begin{equation*}
		\det \partial^2_{\xi \xi} \langle \partial_x\phi(x,\xi),G(x, \xi_0)\rangle|_{\xi = \xi_0} \neq 0
	\end{equation*}
	holds for all $(x, \xi_0) \in \mathrm{supp}\,a$.

\end{itemize}
For any $\lambda\geq 1$, define the operator $T^{\lambda}$ by
\begin{equation}\label{eq:00}
	T^{\lambda}f(x) :=  \int_{B^{n-1}_1(0)} e^{2 \pi i \phi^{\lambda}(x, \xi)}a^{\lambda}(x,\xi) f(\xi)\,d \xi
\end{equation}
where  $f \colon B^{n-1}_1(0) \to \mathbb{C}, a(x,\xi)\in C_c^\infty(B_{1}^{n}(0)\times B_1^{n-1}(0))$ and $$a^{\lambda}(x, \xi) := a(x/\lambda, \xi), \phi^{\lambda}(x,\xi) :=\lambda\phi(x/\lambda,\xi).$$
We say $T^\lambda$ is a H\"ormander type operator if $\phi$ satisfies the conditions H1) and H2).  A typical example for the H\"ormander-type operator is the following extension operator $E$ defined by 
\beq\label{eq:02a}
Ef(x):=\int_{B_1^{n-1}(0)} e^{2\pi i(x'\cdot \xi+x_n\psi(\xi))}f(\xi)d\xi,
\eeq
with 
\beqq
{\rm rank}\Big(\frac{\partial^2 \psi}{\partial \xi_i\partial \xi_j}\Big)_{(n-1)\times (n-1)}=n-1.
\eeqq
H\"ormander conjectured that  if $\phi$ satisfies conditions ${\rm H}_1,{\rm H}_2$, then 
\beq\label{eq:01a}
\|T^\lambda f\|_{L^p(\R^n)}\lesssim \|f\|_{L^p(B_1^{n-1}(0))},
\eeq 
for $p>\frac{2n}{n-1}$. H\"ormander \cite{Hor} proved the above conjecture for $n=2$. For the higher dimensional case, Stein \cite{stein1} proved \eqref{eq:01a}  for $p\geq 2\frac{n+1}{n-1}$ and  $n\geq 3$. Later, Bourgain \cite{Bourgain1} disproved H\"ormander's conjecture  by constructing a kind of counterexample. Furthermore, he showed that Stein's result is sharp in the odd dimensions. For the even dimensions, up to the endpoint case, Bourgain, Guth \cite{BG} proved the sharp result. In summary, we  may state  the results as follows.
\begin{theorem}[\cite{stein1},\cite{BG}]\label{theo1}
	Let $n\geq 3$ and  $T^\lambda$ be a H\"ormander type operator.   For all $\varepsilon>0,\lambda\geq 1$,
	\beq
	\|T^\lambda f\|_{L^p(\R^n)}\lesssim_{\varepsilon,\phi,a} \lambda^{\varepsilon}\|f\|_{L^p(B^{n-1}_1(0))}
	\eeq
	holds whenever 
	 \begin{equation}\label{eq:mainaa}p\geq \left\{\begin{aligned}
			&2\tfrac{n+1}{n-1}\quad \text{\rm for $n$ odd},\\
			&2\tfrac{n+2}{n}\quad \text{\rm for $n$ even}.
		\end{aligned}\right.\end{equation}
	\end{theorem}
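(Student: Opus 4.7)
The plan is to establish Theorem~\ref{theo1} uniformly for both parities of $n$ via the $k$-broad inequality and polynomial partitioning framework, rather than treating Stein's odd-dimensional result and the Bourgain-Guth even-dimensional improvement separately. After a standard wave packet decomposition of $T^\lambda f$ at the Knapp scale $\lambda^{-1/2}$ in the $\xi$-variable, each piece is spatially concentrated on a tube of thickness $\lambda^{1/2+\delta}$ and length $\lambda$; hypothesis H1) ensures the tube directions parametrise an $(n-1)$-dimensional family, while H2) guarantees the quantitative transversality of $k$ caps whose associated tube directions lie in general position.

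The first main step is to prove a $k$-broad inequality
\begin{equation*}
\|T^\lambda f\|_{\mathrm{BL}^p_{k,A}(B_\lambda)} \lesssim_\varepsilon \lambda^{\varepsilon}\|f\|_{L^p(B_1^{n-1}(0))}
\end{equation*}
at the exponents in \eqref{eq:mainaa}, with $k$ chosen to match the parity of $n$ (namely $k=(n+1)/2$ for $n$ odd and $k=(n+2)/2$ for $n$ even). This is carried out by a double induction on $\lambda$ and on $\|f\|_{L^2}$ via Guth's polynomial partitioning: for a fixed ball $B_\lambda$, one selects a polynomial $P$ of degree $D$ whose zero set distributes the broad $L^p$-mass of $T^\lambda f$ evenly among $\sim D^n$ cells. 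The cellular contribution is absorbed into the induction hypothesis at a finer scale, while the wall contribution — wave packets whose tubes cluster near $Z(P)$ — is controlled by a transverse equidistribution estimate bounding the $L^2$ mass of such packets, combined with a variable-coefficient multilinear Kakeya input to handle the remaining transverse packets.

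Second, I would deduce the linear bound from the $k$-broad bound via Bourgain-Guth's narrow/broad trichotomy. Partitioning the $\xi$-support into $K^{-1}$-caps and localising $T^\lambda f$ to balls $B_{K^2}$, one either has $k$ significant caps whose tube directions are transverse, in which case the broad estimate applies, or the significant caps cluster near a $(k-1)$-dimensional affine subspace (the narrow case). The narrow piece is handled recursively by the linear theorem in one lower effective dimension, together with decoupling-type estimates, and the resulting geometric series in $K$ closes exactly at the threshold in \eqref{eq:mainaa}.

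The principal obstacle is the variable-coefficient character of $\phi$: wave packets of $T^\lambda$ curve rather than travel along genuine straight tubes, so the Bennett-Carbery-Tao multilinear Kakeya theorem cannot be invoked directly, and the notion of tangency to the variety $Z(P)$ becomes scale-dependent. To address this I would freeze $\phi$ on balls of radius $\lambda^{1-\delta}$ where it is effectively constant-coefficient at the relevant interior scales, and invoke Guth's variable-coefficient multilinear Kakeya up to a $\lambda^{\varepsilon}$ loss. Reconciling the curvature error with the polynomial cell size is the most delicate part of the argument and ultimately dictates the critical exponents recorded in \eqref{eq:mainaa}.
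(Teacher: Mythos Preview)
The paper does not prove Theorem~\ref{theo1} at all: it is stated as a known result and attributed to Stein~\cite{stein1} and Bourgain--Guth~\cite{BG}. Stein's argument for $p\ge 2\tfrac{n+1}{n-1}$ is a $TT^*$/oscillatory-integral argument, and the Bourgain--Guth improvement in even dimensions proceeds via the broad/narrow decomposition combined directly with multilinear Kakeya, without any polynomial partitioning. So there is no ``paper's own proof'' to compare against, and your proposal is an independent attempt to reprove a cited result by later technology.

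More importantly, your outline contains a genuine gap. You invoke a transverse equidistribution estimate to control the wall contribution in the polynomial partitioning step. But transverse equidistribution is precisely the ingredient that \emph{requires the convex condition} ${\rm H3)}$; the paper says this explicitly in the paragraph following Theorem~\ref{theo2} (``in the convex setting, such concentration lies in an at least $\lambda^{1/2}$ neighborhood of a submanifold which can be manifested by the transverse equidistribution property, while for the general phase, it can be further squeezed into a $1$-neighborhood''). Theorem~\ref{theo1} assumes only ${\rm H1)}$ and ${\rm H2)}$, so for a general H\"ormander phase the wave packets tangent to $Z(P)$ need not spread transversally at all, and the $L^2$ bound you need simply fails. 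If you want to run polynomial partitioning in this generality you must replace that step by the weaker algebraic/tangential analysis that Guth--Hickman--Iliopoulou use in the non-convex case, which is exactly what produces the exponents in~\eqref{eq:mainaa} rather than the stronger ones in~\eqref{eq:maina}. As written, your argument would, if it worked, prove Theorem~\ref{theo2} for phases that do not satisfy its hypotheses.
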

 Lee \cite{LS} observed  that if we further impose the following  \emph{convex  condition}
\begin{itemize}
	\item[H3)] The eigenvalues of the Hessian 
	\begin{equation*}
		\partial^2_{\xi \xi} \langle \partial_x\phi(x,\xi),G(x, \xi_0)\rangle|_{\xi = \xi_0} 
	\end{equation*}
	are all positive for $(x, \xi_0) \in \mathrm{supp}\,a$;
	\end{itemize}
on the phase, the range of $p$  can be obtained beyond that in \eqref{eq:mainaa}. Recently, Guth-Hickman-Iliopoulou \cite{GHI} proved the sharp results for the operator $T^\lambda$ with a  convex  phase. To be more precise, they showed 
\begin{theorem}[\cite{GHI}]\label{theo2}
Let $n\geq 3$ and $T^\lambda$ be a H\"ormander type operator satisfying the convex condition.  For all $\varepsilon>0,\lambda\geq 1$,
	\beq
	\|T^\lambda f\|_{L^p(\R^n)}\lesssim_{\varepsilon,\phi,a} \lambda^{\varepsilon}\|f\|_{L^p(B^{n-1}_1(0))}
	\eeq
	holds whenever 
	\begin{equation}\label{eq:maina}p\geq \left\{\begin{aligned}
			&2\tfrac{3n+1}{3n-3}\quad \text{\rm for $n$ odd},\\
			&2\tfrac{3n+2}{3n-2}\quad \text{\rm for $n$ even}.
		\end{aligned}\right.\end{equation}
\end{theorem}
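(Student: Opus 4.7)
The plan is to prove the above theorem by combining Guth's polynomial partitioning with the $k$-broad ``norm'' framework of Bourgain--Guth, while exploiting the convex condition H3) at the crucial step of counting tubes tangent to an algebraic variety. First I would reduce the linear bound to a $k$-broad estimate. For a large integer $K$ and an integer $k\in\{2,\dots,n\}$, define the broad norm $\|T^\lambda f\|_{BL^p_{k,A}(B_R)}$ by decomposing $f=\sum_\tau f_\tau$ over $K^{-1}$-caps and, on each $K^2$-ball covering $B_R$, discarding the contributions of at most $A$ caps whose normals span less than a $k$-dimensional cone. A Bourgain--Guth type iteration, using $L^2$-orthogonality together with induction on scale, shows that a broad bound
\beqq
\|T^\lambda f\|_{BL^p_{k,A}(\R^n)}\lesssim_\varepsilon \lambda^\varepsilon\|f\|_{L^p}
\eeqq
at the threshold exponent yields the linear inequality at the same $p$.

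Next I would establish this broad estimate by induction on $\lambda$, via polynomial partitioning. Using the polynomial ham-sandwich theorem one constructs a polynomial $P$ of degree $D\sim\lambda^\delta$ whose zero set $Z(P)$ bisects the relevant broad mass of $T^\lambda f$; this produces two regimes. In the \emph{cellular case}, most of the broad mass lives in the $\sim D^n$ open cells of $\R^n\setminus Z(P)$, and since each wave packet enters only a bounded number of cells, applying the inductive hypothesis inside each cell yields a gain. In the \emph{algebraic case}, the mass concentrates in the $\lambda^{1/2+\delta}$-neighborhood of $Z(P)$; here one further splits the wave packets into those whose tubes are transverse to $Z(P)$, controlled by a multilinear Kakeya-type bound, and those tangent to $Z(P)$, whose analysis proceeds by an induction on the dimension of the tangent variety.

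The decisive use of H3) appears in the tangent subcase and is precisely what improves the range from \eqref{eq:mainaa} to \eqref{eq:maina}. Under the convex assumption, the positive-definiteness of the Hessian forces the physical-space tubes of $T^\lambda$ to behave as small perturbations of tubes tangent to the graph of a strictly convex function; consequently, the tubes tangent to an $m$-dimensional subvariety of $Z(P)$ are essentially contained in a $\lambda^{1/2+\delta}$-neighborhood of an $m$-plane, yielding a quantitative ``polynomial Wolff axiom'' for $T^\lambda$. Establishing this sharp tangent-tube count is the main obstacle: for merely Carleson--Sj\"olin phases it is known to fail, and Bourgain's counterexamples saturate \eqref{eq:mainaa}. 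Once the count is in place, one balances the parameters $\delta$, $K$, $A$, the degree $D$, and the number of algebraic recursion levels---whose optimal choice depends on the parity of $n$ and explains the bifurcation in \eqref{eq:maina}---so that the $\lambda^\varepsilon$-loss closes the induction and the final exponent matches \eqref{eq:maina}.
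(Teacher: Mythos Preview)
The paper does not prove this theorem; it is quoted from \cite{GHI} as background, and the paper's own main result (Theorem~\ref{theo}) is an adaptation of the \cite{GHI} machinery to phases satisfying the extra straight condition H4). Your outline has the right overall architecture---broad-narrow reduction to a $k$-broad estimate, then polynomial partitioning into cellular, transverse-algebraic, and tangent-algebraic cases---but your account of how H3) enters is wrong in a way that matters.

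You say that the convex hypothesis forces tubes tangent to an $m$-dimensional subvariety to lie in a $\lambda^{1/2+\delta}$-neighborhood of an $m$-plane, and you call this a ``polynomial Wolff axiom.'' That is not the mechanism in \cite{GHI}. The role of H3) is the \emph{transverse equidistribution property} (Section~\ref{tran} and Lemma~\ref{trans} of this paper): positive-definiteness of the Hessian makes $\xi\mapsto G(\xi)$ a local diffeomorphism, so wave packets tangent to an $m$-variety $Z$ have their $L^2$ mass genuinely spread out at scale $r^{1/2}$ in the $n-m$ directions normal to $Z$. Quantitatively, passing from a ball of radius $r$ to a sub-ball of radius $\rho$ and restricting to wave packets tangent to a translate of $Z$ yields an $L^2$ gain of $(r/\rho)^{-(n-m)/2}$. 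It is this equidistribution gain---not a tube-counting Wolff axiom---that drives the induction on the dimension of the tangent variety and produces the exponents in \eqref{eq:maina}. The paper states this explicitly in the paragraph following Theorem~\ref{theo2}: without H3) the Gauss directions can collapse and the mass can concentrate in a width-$1$ neighborhood of a submanifold, whereas H3) forces any such concentration to occur at width at least $\lambda^{1/2}$. Your ``contained in a plane-neighborhood'' picture is, if anything, closer to what the straight condition H4) provides in the paper's own theorem, not to what H3) alone yields in \cite{GHI}.
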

The primary difference between the translation invariant case \eqref{eq:02a} and \eqref{eq:00} is that the main contribution of $T^\lambda f$ may be  concentrated in  a small neighborhood of a lower dimensional submanifold  which features slightly  differently between the odd and even dimensions. However, such phenomena can not happen for the extension operator $E$ if the Kakeya conjecture holds. The difference between Theorem \ref {theo1} and \ref{theo2} arises from  the fact that in the convex setting, such concentration lies in an at least  $\lambda^{1/2}$ neighborhood of a submanifold which can be manifested by the transverse equidistribution property, while for the general phase, it can be further squeezed into an $1$-neighborhood of a submainifold. 

As one can see, the Kakeya compression phenomena prohibit the sharp range of $p$ in \eqref{eq:mainaa},\eqref{eq:maina} to be matched with the conjectured range $p>\frac{2n}{n-1}$. Therefore, it is natural to conjecture the potentially possible range of $p$ in \eqref{eq:01a} will be $p>\frac{2n}{n-1}$ if the Kakeya compression phenomena does not happen. A  probable way to preclude the Kakeya compression phenomena is to impose the following \emph{straight condition} on the phase.
\begin{itemize}
	\item[H4)]  For given $\xi$,  $ G(x,\xi)$ keeps invariant when $x$ changes.
	\end{itemize}
 Formally,  we may formulate the following conjecture.

\begin{conjecture}\label{coj}
	Let $n\geq 3$ and $T^{\lambda}$ be a H\"ormander type operator with the  straight condition. For all $\varepsilon >0$,  the estimate
	\begin{equation}\label{gee}
		\|T^{\lambda}f\|_{L^p(\R^n)} \lesssim_{\phi, a}\|f\|_{L^{p}(B^{n-1}_1(0))}
	\end{equation}
	holds uniformly for $\lambda \geq 1$ whenever $p>\frac{2n}{n-1}$. 

\end{conjecture}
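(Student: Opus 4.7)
The natural approach is Guth's polynomial partitioning / $k$-broad framework, as refined by Guth--Hickman--Iliopoulou for the convex case. First, apply the Bourgain--Guth reduction: pass from \eqref{gee} to a $k$-broad $L^p$ estimate of the schematic form
\beqq
\|T^\lambda f\|_{\mathrm{BL}^p_{k,A}(B_R)}\lesssim_\varepsilon \lambda^\varepsilon\|f\|_{L^p(B_1^{n-1}(0))},
\eeqq
by writing the ``narrow'' part of $T^\lambda f$ on each $B_{K^2}$ as a sum over $O(K^{n-2})$ hyperplane slabs of frequencies and iterating an induction on the radius $R$ together with parabolic rescaling for caps of radius $K^{-1}$. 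If such a broad estimate can be established at an exponent $p>\tfrac{2n}{n-1}$ for a suitable $k$, Conjecture \ref{coj} follows in the corresponding range, with no $\lambda^{\varepsilon}$ loss by the standard $\varepsilon$-removal argument.

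The second step is to prove the broad estimate by polynomial partitioning on each $B_R$, combined with the wave packet decomposition $T^\lambda f=\sum_\theta T^\lambda f_\theta$ at scale $\lambda^{1/2}$. Here condition H4) plays the decisive role. In the convex case the wave packet tubes $T_\theta$ are only approximately straight on scale $\lambda^{1/2}$, and the transverse equidistribution of \cite{GHI} can squeeze many tubes into a $\lambda^{1/2}$-neighborhood of an $(n-1)/2$-dimensional submanifold, forcing the range \eqref{eq:maina}. Under H4) the Gauss map $G(x,\xi)$ is independent of $x$; a stationary phase computation along $\partial_\xi \phi^\lambda=0$ then shows that the center map of each wave packet is affine in the long direction, so the $T_\theta$ are \emph{genuine} $\lambda^{1/2}\times\cdots\times \lambda^{1/2}\times \lambda$ tubes, exactly as for the translation-invariant extension operator $E$ in \eqref{eq:02a}. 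Consequently the transverse equidistribution can be iterated all the way down to the $O(1)$ scale, eliminating the compression obstruction, and one may feed the straight tubes into the multilinear Kakeya inequality of Bennett--Carbery--Tao to close the induction on scales.

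\textbf{Main obstacle.} Once H4) neutralises the variable-coefficient pathology, the operator $T^\lambda$ behaves essentially like an extension operator for the fixed hypersurface $\Sigma=\{G(0,\xi):\xi\in B_1^{n-1}(0)\}$, and reaching the full range $p>\tfrac{2n}{n-1}$ in \eqref{gee} is morally equivalent to Stein's restriction conjecture for $\Sigma$. Since the restriction conjecture is open for every $n\geq 4$, Conjecture \ref{coj} cannot be settled unconditionally by the scheme above; a realistic target is to match the best current extension bound in each dimension (Guth's sharp range in $\R^3$, and Hickman--Rogers / Wang--Wu--Wang type improvements in higher dimensions), which would enter as the Kakeya-type input at the base of the partitioning induction. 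The genuine difficulty of Conjecture \ref{coj} is therefore that, having killed off the variable-coefficient pathology with H4), what remains is exactly the full strength of the linear restriction problem.
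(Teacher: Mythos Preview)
Your analysis is essentially correct, and in fact there is nothing to compare against: the statement is a \emph{conjecture}, and the paper does not prove it. The paper explicitly observes, immediately after stating Conjecture~\ref{coj}, that it implies the restriction conjecture and hence the Kakeya conjecture; this is exactly the obstruction you isolate in your ``Main obstacle'' paragraph. So your conclusion that the conjecture cannot be settled unconditionally by the broad/partitioning scheme is on target and matches the paper's own stance.

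What the paper actually proves is the weaker Theorem~\ref{theo}: under the additional convex hypothesis H3) together with H4), the estimate \eqref{eq:101} holds for $p>p_n$, an exponent strictly larger than $\tfrac{2n}{n-1}$. The proof follows precisely the architecture you sketch---wave packets, transverse equidistribution adapted to the straight condition, $k$-broad estimates via polynomial partitioning, and a broad-to-linear passage using decoupling and parabolic rescaling---but it does not (and cannot, as you note) reach the conjectured endpoint range. One technical point the paper emphasises that your outline underplays: H4) is not preserved under the spatial changes of variables used in the standard reductions, so one must work in a class $\mathbf{\Phi}_{\rm cs}$ of phases that satisfy H4) only up to a diffeomorphism in $x$, and carry this through the induction.

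One small correction: the restriction conjecture is open already for $n=3$ (Guth's range $p>3.25$ is not the full $p>3$), so your sentence ``open for every $n\geq 4$'' should read $n\geq 3$.
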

Obviously, Conjecture \ref{coj} implies the restriction conjecture,  and thus the Kakeya conjecture. Furthermore, we may see later Conjecture \ref{coj} also has many other applications. For example, Conjecture  \ref{coj} implies the Bochner-Riesz conjecture related to a class of general hypersurfaces with nonvanishing Gaussian curvature  and the local smoothing conjecture for the fractional Schr\"odinger equation and the sharp resolvent estimates outside of the uniform boundedness range.

In this paper,  we prove certain $L^p$ estimate for  $T^\lambda$ being a H\"ormander type operator with  the convex  and straight conditions.   Let $p_{n}$ be an  exponent which will be defined in Section \ref{fp}.  We may state our main results as follows.
\begin{theorem}\label{theo}
	Let $n\geq 3$ and $T^{\lambda}$ be a H\"ormander type operator with the convex and straight conditions. For all $\varepsilon >0$ the estimate
	\begin{equation}\label{eq:101}
		\|T^{\lambda}f\|_{L^p(\R^n)} \lesssim_{\varepsilon, \phi, a} \lambda^{\varepsilon}\|f\|_{L^{p}(B^{n-1}_1(0))}
	\end{equation}
	holds uniformly for $\lambda \geq 1$ whenever $p>p_{n}$. 
	\end{theorem}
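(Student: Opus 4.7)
The plan is to run the polynomial partitioning framework of Guth, in the refined form developed by Guth--Hickman--Iliopoulou \cite{GHI}, and to extract the gain over Theorem \ref{theo2} from the straight condition H4). The first step is the usual reduction: rescale to a ball of radius $R \sim \lambda$, decompose $T^\lambda f$ into wave packets $f_T$ indexed by tubes $T$ of dimensions $R^{1/2+\delta}\times \cdots \times R^{1/2+\delta}\times R$, and reduce the desired linear estimate \eqref{eq:101} to a $k$-broad inequality
\begin{equation*}
\|T^\lambda f\|_{BL^p_{k,A}(B_R)} \lesssim_{\varepsilon} R^{\varepsilon}\|f\|_{L^p(B^{n-1}_1(0))}
\end{equation*}
for a suitable $k$ and large enough parameter $A$. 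The conversion from $k$-broad to linear is then handled by the Bourgain--Guth broad/narrow dichotomy, and the resulting exponent $p_n$ will be the output of balancing the combinatorial exponents arising in this scheme.

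The second step is to prove the $k$-broad inequality by induction on $R$ (and on the depth of algebraic nesting). At each scale one applies polynomial partitioning to the function $|T^\lambda f|^p$ at degree $D$, producing either a cellular case (where the contribution to many cells is geometrically controlled and one closes the induction directly) or an algebraic case, where most of the mass of $|T^\lambda f|^p$ sits in the $R^{1/2+\delta}$-neighborhood of an algebraic variety $Z$ of some dimension $m\in\{1,\ldots,n-1\}$. For the algebraic case, one further splits the tubes into those \emph{tangent} to $Z$ and those \emph{transverse} to $Z$. The transverse case is reduced to a lower-dimensional instance and closed by the inductive hypothesis.

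The heart of the argument, and the main source of improvement over \cite{GHI}, lies in the tangential case. In \cite{GHI}, the convex condition H3) is used to prove the transverse equidistribution property, which forces tangential wave packets to fill at least an $R^{1/2}$-neighborhood of $Z$; however, Kakeya-type compression into such an $R^{1/2}$-slab is not ruled out, and this is what caps the range of $p$ in \eqref{eq:maina}. The straight condition H4), which asserts that the normal direction $G(x,\xi)$ is independent of $x$, should be combined with H3) to upgrade this geometric picture: the tube directions in the tangential set are now determined solely by frequency, so the tube family effectively ``translates'' in the $x$-variable in the same way as for the model extension operator $E$. This allows one to invoke a multilinear Kakeya or Brascamp--Lieb bound of the same strength as in the translation-invariant setting and thereby forbid the secondary compression exploited by Bourgain's counterexamples. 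Combined with the GHI transverse equidistribution input, this yields a strictly stronger tangential estimate, which is what drives $p_n$ below the exponent in \eqref{eq:maina}.

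The main obstacle, as I see it, is to turn the straight condition into a quantitative statement usable inside the induction: one must verify that after rescaling to the algebraic cell at each step the rescaled phase still satisfies (a suitable approximate form of) H4), so that H4) propagates through the induction on scales exactly as H3) does in \cite{GHI}. Once this stability is established, the remaining work is bookkeeping: tracking how the improved tangential bound interacts with the transverse equidistribution estimate of \cite{GHI} and with the broad-to-linear passage, and computing the exponent $p_n$ from the optimization that results.
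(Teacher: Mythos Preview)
Your overall strategy matches the paper's: reduce to a $k$-broad estimate via the Bourgain--Guth mechanism, prove the broad estimate by polynomial partitioning with transverse equidistribution (as in \cite{GHI}), and exploit the straight condition H4) to gain over the GHI range. You have also correctly identified the main obstacle: H4) is \emph{not} preserved under the changes of spatial variables that parabolic rescaling forces on you, so the induction does not close if you insist on literal H4) at every scale.

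However, you have not proposed how to overcome this obstacle, and this is where the paper's main structural device lies. The paper's solution is to abandon the straight condition as a hypothesis tracked literally through the induction and instead work in a larger class $\mathbf{\Phi}_{\rm cs}$ of phases that satisfy H4) only \emph{after composition with some diffeomorphism in $x$}. One then introduces a normalized ``reduced form'' for such phases and runs the induction on the quantity $Q_p(\lambda,R)$ defined as a supremum over all reduced-form operators. The parabolic rescaling step (Lemma~\ref{rescaling}) then closes because the rescaled phase, while not literally straight, remains in $\mathbf{\Phi}_{\rm cs}$ and can be put back in reduced form. Without this device your induction on scales will not close, so this is a genuine gap in the proposal rather than mere bookkeeping.

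Second, your attribution of the gain to ``a multilinear Kakeya or Brascamp--Lieb bound of the same strength as in the translation-invariant setting'' is imprecise. The actual mechanism is that H4) makes the wave-packet tubes genuinely \emph{straight} line segments rather than curved, so they satisfy the (nested) polynomial Wolff axioms; this is what yields the improved broad-norm exponent $p_n(k)=2+\tfrac{6}{2(n-1)+(k-1)\prod_{i=k}^{n-1}\frac{2i}{2i+1}}$ of Theorem~\ref{theo5}, following \cite{GOW,HRZ}. The convex condition H3) enters separately through the transverse equidistribution estimates of Section~\ref{tran}, where an additional subtlety arises because the set $\{\xi: G(\xi)\in V\}$ is now a curved submanifold rather than an affine subspace, and this has to be linearized at the scale of the small ball. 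The final exponent $p_n$ then comes from optimizing $k$ between this broad-norm range and the decoupling constraint $p>2\frac{2n-k+2}{2n-k}$ in Proposition~\ref{prop reduction}.
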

The proof of Theorem \ref{theo} relies on  the polynomial partitioning method which was  introduced by Guth\cite{Guth,Guth18} to handle the restriction problem. Since then, it has been also used to study the  pointwise convergence problem for the Schr\"odigner operator, Bochner-Riesz conjecture, Kakeya conjecture and local smoothing conjecture for the wave equation and  the fractional Schr\"odinger equation, one may refer to \cite{DGL,Wu,HRZ,GLMX,GOW} and references therein for more details. Technically speaking, the straight condition can not be kept under the change of variables in the spatial space. To overcome this obstacle, we need to work with a more general class of functions which satisfy the straight condition up to a diffeomorphism in the spatial variables. It should be noted that the proof of Theorem \ref{theo} is obtained by adapting the arguments  in \cite{GHI,GOW}. Thus  we only streamline the structure of the proof  when there are too many overlaps.

The rest of this paper is organized as follows: In Section \ref{app}, we will show the  applications of conjecture \ref{coj} to the multiplier problem, local smoothing estimates for the fractional Schr\"odinger equation and  the sharp resolvent estimates outside of the uniform boundedness range.  In Section \ref{reduc}, we perform some reductions. In particular, we introduce a special class of  functions to make the induction arguments completed. In Section \ref{wav}, we introduce the wave packet decomposition which is an important tool. In Section  \ref{tran}, we prepare some useful ingredients which play important roles in the proof of the broad ``norm" estimate  in Section \ref{broad}.  With the above preparations, finally, we prove Theorem \ref{theo}  in Section \ref{fp}.

{\bf Acknowledgment:}This project originates from  a summer school program held by Shaoming Guo, Bochen Liu and Yakun  Xi at Zhejiang University and is partially supported by NSF China grant NO. 12171424. We are grateful for all  organizers for providing  an opportunity to discuss the problem. The authors thank Prof.Shaoming Guo for constructive discussion and  valuable suggestions. The first author is grateful for Prof.Gang Tian's support.  C.Gao was supported by Chinese Postdoc Foundation Grant 8206300279.

 \indent{\bf Notations.} For nonnegative quantities $X$ and $Y$, we will write $X\lesssim Y$ to denote the inequality $X\leq C Y$ for some $C>0$. If $X\lesssim Y\lesssim X$, we will write $X\sim Y$. Dependence of implicit constants on the spatial dimensions or integral exponents such as $p$ will be suppressed; dependence on additional parameters will be indicated by subscripts. For example, $X\lesssim_u Y$ indicates $X\leq CY$ for some $C=C(u)$. We write $A(R)\leq {\rm RapDec}(R) B$ to mean that for any power $\beta$, there is a constant $C_\beta$ such that
\beqq
| A(R)|\leq C_\beta R^{-\beta}B \,\,\quad\text{for all}\,\; R\geq 1.
\eeqq
We will also often abbreviate $\|f\|_{L_x^r(\R^n)}$ to $\|f\|_{L^r}$.  For $1\leq r\leq\infty$,
we use $r'$ to denote the dual exponent to $r$ such that $\tfrac{1}{r}+\tfrac{1}{r'}=1$. Throughout the paper, $\chi_E$ is the characteristic function of the set $E$.
We usually denote by $B_r^n(a)$ a ball in $\R^n$ with center $a$ and radius $r$. We will also denote by $B_R^n$ a ball of radius $R$ and arbitrary center in $\R^n$.  Denote by $A(r):=B_{2r}^n(0)\setminus B_{r/2}^n (0)$. We denote $w_{B^{n}_{R}(x_0)}$ to be a nonnegative weight function adapted to the ball $B^{n}_{R}(x_0)$ such that
$$ w_{B^{n}_{R}(x_0)}(x)\lesssim (1+R^{-1}|x-x_0|)^{-M},$$
for some large constant $M\in \mathbb{N}$.

We define the Fourier transform on $\mathbb{R}^n$ by
\begin{equation*}
	\aligned \hat{f}(\xi):= \int_{\mathbb{R}^n}e^{-2\pi  ix\cdot \xi}f(x)\,\dif x:=\mathcal{F}f(\xi).
	\endaligned
\end{equation*}
and the inverse Fourier transform by
\beqq
\check{g}(x):=\int_{\R^n} e^{2\pi ix\cdot \xi}g(\xi)\dif \xi:=(\mathcal{F}^{-1}g)(x).
\eeqq
These help us to define the fractional differentiation operators $\vert\nabla\vert^s$ and $\langle\nabla\rangle^s$ for $s\in \R$ via
$$\vert\nabla\vert^s f(x):=\mathcal{F}^{-1}\big\{\vert\xi\vert^s\hat{f}(\xi)\big\}(x)\quad \text{and}\quad \langle\nabla\rangle^s f(x):=
\mathcal{F}^{-1}\big\{ (1+|\xi|^2)^\frac{s}{2}\hat{f}(\xi)\big\}(x).$$
In this manner, we define the  Sobolev norm of the space $L^p_{\alpha}(\R^n)$  by
$$\|f\|_{L^p_{\alpha}(\R^n)}:=\big\|\langle\nabla\rangle^\alpha f\big\|_{L^p(\R^n)}.$$

\section{Applications}\label{app}
In this section, we talk about the relations of Conjecture \ref{coj} to other associated  problems. In particular, the multiplier problem with respect to a  general class of hypersurface with non-vanishing Gaussian curvature, local smoothing conjecture  for the fractional Schr\"odinger and the sharp resovent estimate outside of uniform boundedness range  will be discussed.

Let $\psi:\R^{n-1}\rightarrow \R$ be a smooth function with
\beqq
{\rm rank}\Big(\frac{\partial^2\psi}{\partial \xi_i\partial \xi_j}\Big)_{(n-1)\times (n-1)}=n-1,
\eeqq
and 
\beqq
|\partial^\alpha \psi(\xi)|\leq  1, \quad \alpha \in \mathbb{Z}^{n-1}, |\alpha|\leq N,
\eeqq
where $N$ is a large constant.
Therefore, by inverse function theorem,  there exists  locally  a function $g:\R^{n-1}\rightarrow \R^{n-1}$ such that 
\beq\label{eq:002}
\partial_{\xi}\psi(g(x'))=-x',\quad x'\in \R^{n-1}.
\eeq
\subsection{Multiplier problem}\label{sub1}
Let $\delta\geq 0,\xi=(\xi',\xi_n)$ and  $m^\delta(\xi):=(\xi_n-\psi(\xi'))^\delta_{+}\chi(\xi')$, where $\chi$ is a smooth compactly supported function with ${\rm supp} \chi \subset B_2^{n-1}(0)$  and
\beqq
t^{\delta}_{+}=\left\{\begin{array}{ccc}
	t^\delta,\quad t\geq 0,\\
	0,\quad t<0.
\end{array}\right.
\eeqq
We consider the following multiplier problem: for which $\delta$ and $p$ such that 
\beq\label{eq:b1}
\big\|m^\delta(D)f\big\|_{L^p(\R^n)}\lesssim_\delta \|f\|_{L^p(\R^n)}.
\eeq
It is conjectured that 
\begin{conjecture}\label{coj1}
	For $\delta\geq 0$ and $1\leq p\leq \infty$, then 
	\beq
	\big\|m^\delta(D)f\big\|_{L^p(\R^n)}\lesssim_{\delta}\|f\|_{L^p(\R^n),\quad} \delta>\delta(p):=\max\Big\{n\big|\frac{1}{2}-\frac{1}{p}\big|-\frac{1}{2}, 0\Big\}.
	\eeq
\end{conjecture}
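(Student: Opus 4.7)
The plan is to derive Conjecture \ref{coj1} from Conjecture \ref{coj} via the classical Carleson--Sj\"olin reduction: dyadically decompose the multiplier $m^\delta$ around the hypersurface $\Sigma := \{\xi_n = \psi(\xi')\}$, realise each piece as an $L^p$-rescaling of a H\"ormander-type extension operator $T^\lambda$ with $\lambda = 2^j$, and verify that the resulting phase satisfies the straight condition H4).

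First, fix $\beta \in C_c^\infty(\R)$ with $\supp \beta \subset [1/2, 2]$ and $\sum_{j \geq 0} \beta(2^j t) = 1$ for $0 < t \leq 1$, and write
\begin{equation*}
m^\delta(\xi) = \sum_{j \geq 0} 2^{-j\delta}\, m_j(\xi), \qquad m_j(\xi) := 2^{j\delta}(\xi_n - \psi(\xi'))_+^\delta\, \beta\bigl(2^j(\xi_n - \psi(\xi'))\bigr)\chi(\xi'),
\end{equation*}
so that each $m_j$ is smooth, uniformly bounded in $j$, and supported in the $2^{-j}$-thick slab around $\Sigma$. It therefore suffices to prove the uniform-in-$j$ bound
\begin{equation*}
\|m_j(D)f\|_{L^p(\R^n)} \lesssim 2^{j\delta(p)}\|f\|_{L^p(\R^n)}, \qquad \delta(p) = n\bigl|\tfrac{1}{2} - \tfrac{1}{p}\bigr| - \tfrac{1}{2};
\end{equation*}
summing the resulting geometric series then yields \eqref{eq:b1} whenever $\delta > \delta(p)$, and the case $p < 2n/(n+1)$ is handled by duality.

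Second, for a fixed $j$, change variables $\xi_n = \psi(\xi') + 2^{-j}\eta$ to factorise $m_j(D)$ as a weighted superposition in $|\eta|\sim 1$ of extension operators attached to the translated graphs $\{\xi_n = \psi(\xi') + 2^{-j}\eta\}$; the extra phase factor $e^{2\pi i x_n 2^{-j}\eta}$ is absorbed by a harmless modulation in $x_n$. A rescaling $x \mapsto 2^j x$ then identifies the $L^p(\R^n)$ norm of each resulting operator, up to a controlled power of $2^j$, with that of the H\"ormander-type operator $T^\lambda$ ($\lambda = 2^j$) whose phase is
\begin{equation*}
\phi(x, \xi) = x' \cdot \xi + x_n \psi(\xi).
\end{equation*}
For this $\phi$, $\partial_x \phi = (\xi, \psi(\xi))$ is independent of $x$; consequently the matrix $\partial_\xi \partial_x \phi$, and hence the vector $G_0$ from H2), depend only on $\xi$. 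Thus $G(x, \xi)$ coincides with the unit normal to $\Sigma$ at the parameter $\xi$ and is independent of $x$, so the straight condition H4) is automatic, while H1) and H2) follow from $\det \mathrm{Hess}\,\psi \neq 0$. Applying Conjecture \ref{coj} for $p > 2n/(n-1)$ yields $\|T^\lambda f\|_{L^p(\R^n)} \lesssim \|f\|_{L^p(B^{n-1}_1(0))}$ uniformly in $\lambda$, which upon unscaling produces precisely the Knapp factor $2^{j\delta(p)}$.

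The main obstacle, in my view, is the bookkeeping in the rescaling step: one must pass from the $L^p(\R^n)$ bound for the multiplier (whose kernel decays rapidly but is not compactly supported) to the $L^p(\R^n)$ bound for $T^\lambda$ supplied by Conjecture \ref{coj}. Because the Schwartz tails of $\check m_j$ decay faster than any polynomial at distances $\gg 2^j$, one may safely truncate to a $2^j$-ball and absorb the remainder in error terms. The fact that Conjecture \ref{coj} is stated \emph{without} any $\lambda^\varepsilon$ loss is precisely what permits the dyadic sum above to converge exactly at the critical index $\delta(p)$; any $\lambda^\varepsilon$ loss, as present in Theorems \ref{theo1} and \ref{theo2}, would only yield $\delta > \delta(p) + \varepsilon$ for every $\varepsilon > 0$ and thereby miss the endpoint.
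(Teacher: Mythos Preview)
Your proposal contains a genuine gap at the crucial step. Writing $m_j(D)f$ as a superposition over $|\eta|\sim 1$ of extension operators is formally correct, but after applying Conjecture \ref{coj} to the translation-invariant phase $\phi(x,\xi)=x'\cdot\xi+x_n\psi(\xi)$ you obtain
\[
\|E g_\eta\|_{L^p(\R^n)}\lesssim \|g_\eta\|_{L^p(\R^{n-1})},\qquad g_\eta(\xi')=\chi(\xi')\,\hat f(\xi',\psi(\xi')+2^{-j}\eta),
\]
and the right-hand side is an $L^p$ norm of $\hat f$ restricted to a hypersurface. There is no way to control this by $\|f\|_{L^p(\R^n)}$; indeed, for the translation-invariant phase Conjecture \ref{coj} \emph{is} the restriction conjecture, and the implication ``restriction $\Rightarrow$ Bochner--Riesz'' is not known to follow by such a direct argument. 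Your rescaling $x\mapsto 2^j x$ does not address this mismatch: it converts the output norm correctly, but the input remains $g_\eta$, not $f$.

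The paper's Carleson--Sj\"olin reduction is set up precisely to avoid this problem. One works on the \emph{kernel} side: after stationary phase (Lemma \ref{station}) and a spatial dyadic decomposition, the operator becomes, for each fixed $y_n$, an oscillatory integral operator
\[
T^{2^\ell}_{y_n} f_{y_n}(x)=\int_{\R^{n-1}} e^{2\pi i 2^\ell\,\Psi(x,y')}\,a(x,y')\,f(y',y_n)\,dy',
\]
whose input $f(\cdot,y_n)$ lives on the \emph{physical} side. Integrating the resulting $L^p$ bound in $y_n$ then recovers $\|f\|_{L^p(\R^n)}$. The price is that the phase $\Psi$ in \eqref{phase1} is genuinely variable-coefficient; the non-trivial content of the argument is the verification that the associated Gauss map $G(x,y')=(y',1)/\sqrt{1+|y'|^2}$ is independent of $x$, so that H4) holds and Conjecture \ref{coj} applies. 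This is exactly the place where the extra strength of Conjecture \ref{coj} over the restriction conjecture is used; your reduction never leaves the translation-invariant world and therefore cannot close.
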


We will show how Conjecture \ref{coj} implies Conjecture \ref{coj1}.  Let $p>\frac{2n}{n-1}$ and $\eta:\R\rightarrow \R$ be a smooth compactly supported function, with ${\rm supp} \eta \subset (1/2,1)$ satisfying 
\beqq
\sum_{j\in \mathbb{Z}}\eta(2^j t)\equiv 1,\quad t>0.
\eeqq
We break $m^\delta$ into pieces
\beqq
m^\delta(\xi)=\sum_{j\geq 1}\eta(2^j(\xi_n-\psi(\xi')))m^\delta(\xi)+r(\xi),
\eeqq
where $r(\xi)$ is a smooth function with ${\rm supp}r\subset B_2^n(0)$.

Define an operator $m_j^\delta(D)$ as follows:
\beqq
m^\delta_j(D)f(x):=\Big(\eta(2^j(\xi_n-\psi(\xi')))m^\delta(\xi)\hat{f}(\xi)\Big)^{\vee}(x).
\eeqq
Let $K_j^\delta(x)$  be the kernel of the multiplier $m_j^\delta(D)$, i.e.
\beqq
K_j^\delta(x)=\int_{\R^n}e^{2\pi ix \cdot \xi}\eta(2^j(\xi_n-\psi(\xi')))m^\delta(\xi)d\xi.
\eeqq 
Through changing of variables, we may reformulate  $K_j^\delta(x)$  as follows:
\beqq
K_j^\delta(x)=2^{-j\delta}\int_{\R} e^{2\pi ix_n \xi_n}\widetilde{\eta}(2^j\xi_n) \int_{\R^{n-1}} e^{2\pi i(x'\cdot \xi'+x_n\psi(\xi'))}\chi(\xi')d\xi' d\xi_n,
\eeqq
where 
\beqq
\widetilde{\eta}(t)=\eta(t)t^\delta_{+}.
\eeqq
For convenience, define 
\beqq
K_j(x):=\int_{\R} e^{2\pi ix_n \xi_n}\widetilde{\eta}(2^j\xi_n) \int_{\R^{n-1}} e^{2\pi i(x'\cdot \xi'+x_n\psi(\xi'))}\chi(\xi')d\xi' d\xi_n.
\eeqq
To handle  the inner part of the integral with respect to $\xi'$, we use the stationary phase method. For this purpose, we borrow the following lemma from \cite{KL} with a slight modification. One may refer to \cite{KL} for the proof.
\begin{lemma}\label{station}
	Define 
	$$I_\psi(x):=\int_{\R^{n-1}} e^{2\pi i(x'\cdot \xi'+x_n\psi(\xi'))}\chi(\xi')d\xi',$$
	then 
	\begin{itemize}
		\item  If $|x_n|\geq 1/2$ and $2^5|x'|\leq |x_n|$, then for every $M\in \mathbb{N}$ satisfying $2M\leq N$ we have 
		\beq
		I_{\psi}(x)=\frac{c_n}{\sqrt{|K|}}e^{2\pi i(x'\cdot g(\frac{x'}{x_n})+x_n\psi(g(\frac{x'}{x_n})))}
		\times \sum_{j=0}^{M-1} \mathcal{D}_j \chi(\xi')|_{\xi'=g(\frac{x'}{x_n})}|x_n|^{-\frac{n-1}{2}-j}+\mathcal{E}_M(x),	\eeq
		where $c_n$ is a constant depending on $n$, $K$ denotes the Gaussian curvature of the hypersurface $(\xi',\psi(\xi'))$ at point $(g(\frac{x'}{x_n}),\psi(g(\frac{x'}{x_n})))$, $\mathcal{D}_0 \chi=\chi$ and $\mathcal{D}_j$ is a differential operator in $\xi'$ of order $2j$. For $\mathcal{E}$, we have the estimate
		\beqq
		|\mathcal{E}_M(x)|\lesssim_{M,\psi} |x_n|^{-M}.
		\eeqq
	\end{itemize}
	\item If $2^6|x'|\geq |x_n|$ or $|x_n|\leq 2$, then for every $0\leq M\leq N$ there exists a constant $C_M$, such that 
	\beqq
	|I_\psi(x)|\leq C_M(1+|x|)^{-M}.
	\eeqq	
\end{lemma}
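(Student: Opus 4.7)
The plan is to treat the two regimes separately as a standard stationary / non-stationary phase analysis of the oscillatory integral $I_\psi(x)=\int e^{2\pi i \Phi_x(\xi')}\chi(\xi')\,d\xi'$ with phase $\Phi_x(\xi'):=x'\cdot\xi'+x_n\psi(\xi')$.

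For the stationary regime $|x_n|\geq 1/2$ and $2^5|x'|\leq|x_n|$, I would factor out the large parameter by writing $\Phi_x(\xi')=x_n\widetilde\Phi(\xi')$ with $\widetilde\Phi(\xi')=(x'/x_n)\cdot\xi'+\psi(\xi')$. The critical point equation $\partial_\xi\widetilde\Phi=0$ reduces to $\partial_\xi\psi(\xi')=-x'/x_n$, whose unique solution on $\mathrm{supp}\,\chi$ is $\xi'=g(x'/x_n)$ by the inverse function theorem applied to $\partial_\xi\psi$; this map is a local diffeomorphism thanks to the full-rank hypothesis on the Hessian of $\psi$, and the constraint $|x'/x_n|\leq 2^{-5}$ keeps us inside its domain. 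The Hessian of $\Phi_x$ at the critical point is $x_n\,\partial^2_{\xi\xi}\psi(g(x'/x_n))$, whose determinant is proportional to the Gaussian curvature $K$ of the graph hypersurface via the classical formula $K=\det(\partial^2_{\xi\xi}\psi)/(1+|\partial_\xi\psi|^2)^{(n+1)/2}$. Applying the standard stationary-phase expansion with large parameter $|x_n|$ (e.g.\ Hörmander, \emph{The Analysis of Linear Partial Differential Operators I}, §7.7, or Stein, \emph{Harmonic Analysis}, Ch.~VIII) then yields the claimed asymptotic, with coefficients $\mathcal{D}_j\chi$ arising from the Morse-lemma normal form of $\widetilde\Phi$ around its critical point; each $\mathcal{D}_j$ is a differential operator of order $2j$ whose coefficients are polynomial in derivatives of $\psi$ (which are uniformly controlled by hypothesis). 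The condition $2M\leq N$ guarantees that enough derivatives of $\psi$ are available to run the expansion to order $M$, and the natural remainder bound $O\bigl(|x_n|^{-(n-1)/2-M}\bigr)$ is stronger than the stated $|x_n|^{-M}$ after a cosmetic relabelling of $M$.

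For the non-stationary regime, I would use repeated integration by parts with the first-order operator $L:=\frac{1}{2\pi i|\nabla_\xi\Phi_x|^{2}}\,\nabla_\xi\Phi_x\cdot\nabla_\xi$, which satisfies $L\bigl(e^{2\pi i\Phi_x}\bigr)=e^{2\pi i\Phi_x}$. Writing $\nabla_\xi\Phi_x=x'+x_n\partial_\xi\psi(\xi')$, the bound $|\partial_\xi\psi|\leq 1$ on $\mathrm{supp}\,\chi$ gives, in the subregime $2^{6}|x'|\geq|x_n|$ with $|x|\gtrsim 1$, the lower bound $|\nabla_\xi\Phi_x|\gtrsim|x|$; in the complementary subregime $|x|\lesssim 1$, the desired estimate $(1+|x|)^{-M}\sim 1$ is trivial. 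Iterating $L^{t}$ exactly $M$ times then produces a factor $(1+|x|)^{-M}$, boundary terms vanish by the compact support of $\chi$, and the constants $C_M$ depend only on $M$ and on $\|\partial^\alpha\psi\|_\infty$ for $|\alpha|\leq M+1$.

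The main technical point is bookkeeping: each application of $L^{t}$ distributes a derivative either onto $\chi$ or onto $\partial_\xi\psi$ inside the coefficient $|\nabla_\xi\Phi_x|^{-2}\nabla_\xi\Phi_x$, so $M$ iterations cost at most $M+1$ derivatives of $\psi$, which is compatible with the assumption $|\alpha|\leq N$ on $\psi$. Identifying the leading constant as $c_n/\sqrt{|K|}$ is a standard determinant computation (the factor $1/\sqrt{\det(x_n\partial^2_{\xi\xi}\psi)}$ produced by stationary phase is rewritten in terms of $K$ using the curvature formula above, the remaining $|x_n|$-dependence being collected into the overall $|x_n|^{-(n-1)/2}$ prefactor), while the explicit description of the operators $\mathcal{D}_j$ follows from formal power-series inversion of $\widetilde\Phi$ about $g(x'/x_n)$.
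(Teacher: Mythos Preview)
The paper does not supply its own proof of this lemma; it is quoted from \cite{KL} (Kwon--Lee) with the remark ``One may refer to \cite{KL} for the proof.'' Your outline---stationary phase in the first regime, repeated integration by parts in the second---is the standard route and is essentially what is done in \cite{KL}. The treatment of the stationary regime is correct.

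There is, however, one genuine gap in your non-stationary argument. The implication ``$|\partial_\xi\psi|\leq 1$ on $\mathrm{supp}\,\chi$ together with $2^6|x'|\geq|x_n|$ yields $|\nabla_\xi\Phi_x|\gtrsim|x|$'' is false as written: when $2^{-6}\leq|x'/x_n|\leq 1$ the critical-point equation $\partial_\xi\psi(\xi')=-x'/x_n$ may well have a solution inside $\mathrm{supp}\,\chi$, and at that point $\nabla_\xi\Phi_x$ vanishes. What is actually needed is a smallness condition of the form $\sup_{\mathrm{supp}\,\chi}|\nabla\psi|\leq 2^{-7}$ (say), which in \cite{KL} comes from a preliminary normalisation $\nabla\psi(0)=0$ combined with $\chi$ having suitably small support. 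Granting this, one gets $|x_n\partial_\xi\psi|\leq 2^{-7}|x_n|\leq\tfrac12|x'|$, hence $|\nabla_\xi\Phi_x|\geq\tfrac12|x'|\gtrsim|x|$, and your integration-by-parts scheme then goes through. Without such a hypothesis the second bullet is actually false in the overlap region $2^{-6}|x_n|\leq|x'|\leq 2^{-5}|x_n|$, since the stationary-phase asymptotic from the first bullet gives only $|I_\psi(x)|\sim|x_n|^{-(n-1)/2}$ there, not arbitrary polynomial decay. You should make this normalisation explicit before invoking the non-stationary bound.
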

Let $\widetilde \chi\in C_c^\infty(\R)$ with ${\rm supp}\widetilde \chi \subset (-2^{-5},2^{-5})$ equaling  to $1$ in $(-2^{-6},2^{-6})$,  $\beta\in C_0^\infty(\R)$ with ${\rm supp} \beta\subset [-9/8,-3/8]\cup [3/8,9/8]$ and 
\beqq
\sum_{\ell=-\infty}^\infty \beta(2^{-\ell}t)=1,\; t\neq 0.
\eeqq
We split  $K_j$ as follows:
\beqq
K_j(x)=K_{j,0}(x)+\sum_{\ell \geq 1}K_{j,\ell}(x),
\eeqq
where 
 \beqq
	K_{j,\ell}(x)=\widetilde\chi\big(\frac{|x'|}{x_n}\big)\beta(2^{-\ell}x_n)K_j(x).
	\eeqq
Using Lemma  \ref{station}, we have 
\beqq
\|K_{j,0}\ast f\|_{L^p}\lesssim \|f\|_{L^p}.
\eeqq
For $\ell\geq 1$, the main contribution to $K_{j,\ell}$ comes from $\widetilde{K}_{j,\ell}$ defined by 
\beqq
\widetilde{K}_{j,\ell}(x)=\widetilde{\chi}(\frac{|x'|}{x_n})
\beta(2^{-\ell}x_n)|x_n|^{-\frac{n-1}{2}}e^{2\pi i(x'\cdot g(\frac{x'}{x_n})+x_n\psi(g(\frac{x'}{x_n})))}\times \int_\R e^{2\pi ix_n\xi_n}\widetilde{\eta}(2^j\xi_n)d\xi_n.
\eeqq
Thus it suffices to show 
\beqq
\|\widetilde{T}_{j,\ell} f\|_{L^p(\R^n)}\lesssim 2^{(\frac{n+1}{2}-\frac{n}{p})\ell}2^{-j}(1+2^{\ell-j})^{-M}\|f\|_{L^p},
\eeqq
where $\widetilde{T}_{j,\ell}$ is defined  by
\begin{align*}
	\widetilde{T}_{j,\ell} f(x)=\int_{\R^n}\widetilde{K}_{j,\ell}(x-y)f(y)dy.
\end{align*}
Then by a standard optimization argument, we have 
\beq
\sum_{\ell=1}^\infty \|\tilde T_{j,\ell}f\|_{L^p}\lesssim 2^{(\frac{n-1}{2}-\frac{n}{p})j}\|f\|_{L^p}.
\eeq
Therefore, by a localization argument, it suffices to show 
\beqq
\|2^{\ell n}\widetilde{K}_{j,\ell}(2^{\ell}\cdot )\ast f\|_{L^p(B_1^{n}(0))}\lesssim 2^{(\frac{n+1}{2}-\frac{n}{p})\ell}2^{-j}(1+2^{\ell-j})^{-M}\|f\|_{L^p(B_1^n(0))}.
\eeqq
Note that 
\beq
\begin{aligned}
	2^{\ell n}\widetilde{K}_{j,\ell}(2^{\ell}\cdot)\ast f&=2^{\frac{n+1}{2}\ell }\int_{\R^n} e^{2\pi i2^\ell((x'-y')\cdot g(\frac{x'-y'}{x_n-y_n})+(x_n-y_n)\psi(g(\frac{x'-y'}{x_n-y_n})))}a_{\ell,j}(x,y)f(y)dy\\
	&=2^{\frac{n+1}{2}\ell } \int_{\R} T^{2^\ell}_{y_n} f_{y_n}dy_n,
\end{aligned}
\eeq
where 
\beqq
\begin{aligned}
	a_{\ell,j}(x,y):=\widetilde{\beta}(x_n-y_n)\widetilde\chi\Big(\frac{x'-y'}{x_n-y_n}\Big)\times \int_\R e^{2\pi i2^{\ell}(x_n-y_n)\xi_n}\widetilde{\eta}(2^j\xi_n)d\xi_n,\; \widetilde{\beta}(t):=\beta(t)|t|^{-\frac{n-1}{2}}
\end{aligned}
\eeqq
and 
\beqq
T_{y_n}^{2^\ell}f_{y_n}(x):=\int_{\R^{n-1}} e^{2\pi i2^\ell((x'-y')\cdot g(\frac{x'-y'}{x_n-y_n})+(x_n-y_n)\psi(g(\frac{x'-y'}{x_n-y_n})))}a_{\ell,j}(x,y)f(y',y_n)dy', \; f_{y_n}(\cdot):=f(\cdot, y_n).
\eeqq
It's easy to show that 
	\beq\label{eq:001}
	|\partial_x^\alpha a_{\ell,j}(x,y)|\lesssim_{\alpha,M} 2^{-j}(1+2^{j-\ell})^M.
	\eeq
	Indeed, since $\partial^{\alpha}_x\Big(\widetilde{\beta}(x_n-y_n)\widetilde\chi\Big(\frac{x'-y'}{x_n-y_n}\Big)\Big)$ is bounded for any $|\alpha|\geq 0$, it suffices to show 
	\beq\label{eq:001-add}
	\Big|\partial_{x_n}^\alpha\int_\R e^{2\pi i2^{\ell}(x_n-y_n)\xi_n}\widetilde{\eta}(2^j\xi_n)d\xi_n\Big|\lesssim_{\alpha,M} 2^{-j}(1+2^{j-\ell})^M.
	\eeq
	By integration by parts, \eqref{eq:001-add} follows easily. 

For fixed $y_n$, by changing of variables
\beqq
\frac{x'}{x_n-y_n}\rightarrow  x',\quad  \frac{1}{x_n-y_n}\rightarrow x_n,
\eeqq
under the new coordinates, the phase  $(x'-y')\cdot g\big(\frac{x'-y'}{x_n-y_n}\big)+(x_n-y_n)\psi\big(g\big(\frac{x'-y'}{x_n-y_n}\big)\big)$  becomes
\beq \label{phase1}
\Psi(x,y'):=\big(\frac{x'}{x_n}-y'\big)\cdot g(x'-x_ny')+\frac{1}{x_n}\psi(g(x'-x_ny')).
\eeq
A direct computation shows that the associated Gauss map $G(x,y')$ related to the hypersurface $\{\partial_x\Psi(x,y')\}$ at $y'$ is given by  
\beq
G(x,y')=\frac{(y',1)}{\sqrt{1+|y'|^2}}, 
\eeq
which is  obviously independent of the spatial variables $x$. Furthermore, $\Psi(x,y')$ satisfies the Carleson-Sj\"olin conditions by our assumption that the Hessian of $\psi$ is nondegerate. 

Recall \eqref{eq:001}, we may use Conjecture \ref{coj} to obtain that 
\beqq
\|T_{y_n}^{2^\ell}f_{y_n}\|_{L^p(B_1^n(0))}\lesssim 2^{-\frac{n\ell}{p}}2^{-j}(1+2^{\ell-j})^{-M}\|f_{y_n}\|_{L^p(B_1^n(0))},
\eeqq
uniformly for $y_n$. Finally, integrating with respect to $y_n$, we will obtain the desired results.

If we  impose an additional condition that all  eigenvalues of the Hessian of $\psi$ are positive, then $\Psi(x,y')$ also satisfies the convex condition. From the above discussion, as a direct consequence of Theorem \ref{theo}, we  also have 
\begin{corollary}
	Let $1\leq p\leq \infty, \psi:\R^{n-1}\longrightarrow \R$ be smooth and  $\Big(\frac{\partial^2\psi}{\partial \xi_i\partial \xi_j}\Big)_{(n-1)\times (n-1)}$ has  $(n-1)$ positive eigenvalues, then
	\beq
	\|m^\delta(D)f\|_{L^p(\R^n)}\lesssim_{\delta,\psi,p}\|f\|_{L^p(\R^n)}
	\eeq
	for all $p$ such that $\max\{p,p'\}>p_n$ and $\delta>\delta(p)$.
\end{corollary}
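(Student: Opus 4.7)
My plan is to mirror, step by step, the reduction already carried out in Subsection \ref{sub1}, but now substituting Theorem \ref{theo} for Conjecture \ref{coj}. The only genuinely new ingredient is the verification that the phase $\Psi(x,y')$ in \eqref{phase1}, besides obeying the Carleson--Sj\"olin conditions H1), H2) and the straight condition H4) (all checked in the excerpt), \emph{also} satisfies the convex condition H3) whenever $\partial^2_{\xi\xi}\psi$ is positive definite.

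I would first repeat verbatim the dyadic decomposition $m^\delta=\sum_{j\geq 1}\eta(2^j(\xi_n-\psi(\xi')))m^\delta+r$, the stationary phase expansion of $K_j$ via Lemma \ref{station}, and the spatial decomposition $K_j=K_{j,0}+\sum_{\ell\geq 1}K_{j,\ell}$. The error term $\mathcal{E}_M$ and the near-diagonal piece $K_{j,0}$ are trivially $L^p$-bounded. The principal term $\widetilde{K}_{j,\ell}$, after the change of variables $x'\mapsto x'/(x_n-y_n)$, $1/(x_n-y_n)\mapsto x_n$, yields precisely the operator $T^{2^\ell}_{y_n}$ with phase \eqref{phase1} and amplitude obeying the symbol bound \eqref{eq:001}.

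To verify H3) for $\Psi$, I would exploit the identity $\partial_\xi\psi(g(z))=-z$ to rewrite $\Psi(x,y')=-\psi^{\ast}(x_n y'-x')/x_n$, where $\psi^{\ast}$ denotes the Legendre transform of $\psi$. A short calculation then gives
\begin{equation*}
	\partial^2_{y'y'}\langle\partial_x\Psi(x,y'),G(x,y'_0)\rangle\big|_{y'=y'_0}=\frac{\pm 1}{\sqrt{1+|y'_0|^2}}\bigl[\partial^2_{\xi\xi}\psi\bigl(g(x'-x_n y'_0)\bigr)\bigr]^{-1},
\end{equation*}
the sign depending only on the choice of orientation for $G$. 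Since the inverse of a positive definite matrix is again positive definite, H3) holds (for the appropriate orientation). Combined with $G(x,y')=(y',1)/\sqrt{1+|y'|^2}$ already recorded in the text, all four hypotheses H1)--H4) are satisfied by $\Psi$.

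With H1)--H4) established, Theorem \ref{theo} applied at scale $\lambda=2^\ell$ yields, for every $\varepsilon>0$ and every $p>p_n$,
\begin{equation*}
	\|T^{2^\ell}_{y_n}f_{y_n}\|_{L^p(B_1^n(0))}\lesssim_\varepsilon 2^{\varepsilon\ell}\,2^{-n\ell/p}\,2^{-j}(1+2^{\ell-j})^{-M}\|f_{y_n}\|_{L^p(B_1^n(0))}
\end{equation*}
uniformly in $y_n$. Integrating in $y_n$, optimizing over $\ell$, and summing dyadically over $j$ exactly as in the excerpt produces $\|m^\delta(D)f\|_{L^p}\lesssim\|f\|_{L^p}$ whenever $\delta>\delta(p)+\varepsilon$ and $p>p_n$; letting $\varepsilon\to 0$ covers the full range $\delta>\delta(p)$, $p>p_n$. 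The companion range $p'>p_n$ follows by duality: since $m^\delta$ is real valued, $m^\delta(D)$ is self-adjoint, so its $L^p$ and $L^{p'}$ operator norms coincide, and $\delta(p)=\delta(p')$. The main obstacle, within this reduction, is the linear-algebra bookkeeping in the Hessian computation of Step Three; once it is dispatched, the remainder is an entirely routine substitution of Theorem \ref{theo} into the scheme of Subsection \ref{sub1}.
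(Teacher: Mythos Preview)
Your proposal is correct and follows essentially the same route as the paper: the paper's proof consists of the single observation that, under the positive-definiteness hypothesis on $\partial^2_{\xi\xi}\psi$, the phase $\Psi(x,y')$ in \eqref{phase1} also satisfies the convex condition H3), after which Theorem \ref{theo} is substituted for Conjecture \ref{coj} in the reduction of Subsection \ref{sub1}. Your write-up simply supplies the details the paper omits---the Legendre-transform identity $\Psi=-\psi^{\ast}(x_ny'-x')/x_n$ and the resulting Hessian formula, the absorption of the $\lambda^{\varepsilon}$ loss into the strict inequality $\delta>\delta(p)$, and the duality argument for $p'>p_n$---all of which are straightforward.
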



\subsection{Local smoothing estimates  for the fractional Schr\"odinger equation}
Let $u:\R^{n}\times \R\rightarrow \mathbb{C}$ be the solution to the following equation
\begin{equation}\label{eq-01}\left\{
	\begin{aligned}
		i\partial_tu+(-\Delta)^{\f{\alpha}{2}}u&=0,\quad (t,x)\in\R\times\R^n\\
		u(0,x)&=f(x),
	\end{aligned}\right.
\end{equation}
where $\alpha\in (0,1)\cup (1,\infty)$ and $f$ is a Schwartz function. The solution $u$ can be  expressed by
\begin{equation}\label{eq:e1}
	u(x,t)=e^{it(-\Delta)^{\f{\alpha}{2}}}f(x):=\int_{\R^n} e^{2\pi i(x\cdot\xi+t|\xi|^\alpha)}\hat{f}(\xi) d \xi.
\end{equation}
We are concerned with $L^p$-regularity of the solution $u$. For a fixed time $t$,   Fefferman and Stein \cite{FS}, Miyachi \cite{Miya2} showed the following optimal  $L^p$ estimate:
\beq \label{eq-08}
\|e^{it(-\Delta)^{\f{\alpha}{2}}}f\|_{L^p(\R^n)}\leq C_{t,p}\|f\|_{L^p_{s_{\alpha,p}}(\R^n)},\;\;\;\; s_{\alpha,p}:=\alpha n\Big|\frac{1}{2}-\frac{1}{p}\Big|, \;\; 1<p<\infty.
\eeq
It is conjectured that:
\begin{conjecture}[Local smoothing for the fractional Schr\"odinger operator]\label{coj-sch}
	Let $\alpha\in (0,1)\cup (1,\infty),p>2+\frac2n$ and $s\ge\alpha n(\tfrac{1}{2}-\tfrac{1}{p})-\f{\alpha}{p}$. Then
	\beq \label{eq-09}
	\|e^{it(-\Delta)^{\f{\alpha}{2}}}f\|_{L^p(\R^n\times [1,2])}\leq C_{p,s}\|f\|_{L^p_s(\R^n)}.
	\eeq
\end{conjecture}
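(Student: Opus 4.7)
The strategy is to deduce Conjecture \ref{coj-sch} from Conjecture \ref{coj} by a Littlewood--Paley decomposition and parabolic rescaling that recast the frequency-localized propagator as a H\"ormander-type operator satisfying the straight condition. Writing $f=\sum_{N\geq 1}P_Nf$ in dyadic Littlewood--Paley pieces and using Minkowski combined with the vector-valued Littlewood--Paley inequality (valid in the range $p\geq 2$ forced by the target threshold $p>2+2/n$), it suffices to prove, for every dyadic $N\geq 1$,
\[
\big\|e^{it(-\Delta)^{\alpha/2}}P_Nf\big\|_{L^p(\R^n\times[1,2])}\lesssim N^{\alpha n(1/2-1/p)-\alpha/p}\,\|P_Nf\|_{L^p(\R^n)};
\]
summation in $N$ then yields the Sobolev-norm estimate for any $s$ strictly above this exponent.

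For the $N$-frequency piece, rescale $\xi=N\eta$ in frequency and $(y,s)=(Nx,N^\alpha t)$ in spacetime, setting $\lambda:=N^\alpha$. After these changes of variables the operator becomes (up to benign prefactors and amplitudes localized to $(y,s)/\lambda\in B_1^{n+1}$) a H\"ormander-type operator $T^\lambda$ with translation-invariant phase
\[
\phi(y,s;\eta):=y\cdot\eta+s|\eta|^\alpha,
\]
acting on data $g$ supported in $|\eta|\sim 1$. A direct computation gives
\[
G_0(\eta)\;=\;\bigwedge_{j=1}^{n}\partial_{\eta_j}\partial_{(y,s)}\phi\;\propto\;\bigl(-\alpha|\eta|^{\alpha-2}\eta,\,1\bigr),
\]
independent of $(y,s)$, so H4 (the straight condition) holds. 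The mixed Hessian $\partial_\eta\partial_{(y,s)}\phi$ plainly has full rank $n$, giving H1, while the $\eta$-Hessian of $\langle\partial_{(y,s)}\phi,\,G(\eta_0)\rangle$ at $\eta=\eta_0$ reduces (up to a positive scalar) to $\partial^2_{\eta\eta}|\eta|^\alpha$, whose eigenvalues are $\alpha(\alpha-1)$ and $\alpha$ (with multiplicity $n-1$) and are hence all non-zero for every $\alpha\in(0,1)\cup(1,\infty)$, verifying H2.

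With the three conditions verified, Conjecture \ref{coj} in ambient dimension $n+1$ yields
\[
\|T^\lambda g\|_{L^p(\R^{n+1})}\lesssim\|g\|_{L^p}
\]
for every $p>2(n+1)/n=2+2/n$, which matches the threshold in the statement. Unwinding the Jacobian of the rescaling produces precisely the claimed power $N^{\alpha n(1/2-1/p)-\alpha/p}$. I would note that since only the straight condition (and not the convex condition) is invoked, the argument covers the full range $\alpha\in(0,1)\cup(1,\infty)$; if one instead started from Theorem \ref{theo}, the $\alpha\in(0,1)$ regime would only yield a weaker range because the second eigenvalue $\alpha(\alpha-1)$ becomes negative.

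\textbf{Main obstacle.} The delicate point is converting the Fourier-side data norm $\|g\|_{L^p(|\eta|\sim 1)}$ produced by the H\"ormander estimate into the physical-side norm $\|P_Nf\|_{L^p}$, while also passing from the $\lambda$-ball localization of $T^\lambda$ to a global-in-$x$ estimate over $\R^n$. Following the template of Section \ref{sub1}, I would write $e^{it(-\Delta)^{\alpha/2}}P_N$ as a convolution whose kernel admits a stationary-phase expansion in the spirit of Lemma \ref{station}, identifying its main term as the FIO associated with the rescaled $\phi$; Conjecture \ref{coj} then controls each localized piece, and the global bound is assembled by tiling $\R^n$ into balls of radius $N^{\alpha-1}$ and summing rapidly decaying off-diagonal contributions. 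The low-frequency remainder $N\lesssim 1$ is handled directly by Young's inequality.
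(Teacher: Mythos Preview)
Your Plan applies Conjecture~\ref{coj} to the translation-invariant phase $\phi(y,s;\eta)=y\cdot\eta+s|\eta|^\alpha$. But $T^\lambda$ in \eqref{eq:00} acts on frequency-side data, so this yields only $\|T^\lambda g\|_{L^p}\lesssim\|g\|_{L^p}$ with $g=\widehat{P_Nf}(N\,\cdot\,)$; unwinding the rescaling produces $\|\widehat{P_Nf}\|_{L^p}$ on the right, not $\|P_Nf\|_{L^p}$. For $p\neq 2$ these are not comparable, and what you have obtained is precisely the (adjoint) restriction estimate for the surface $\{(\eta,|\eta|^\alpha)\}$, which is strictly weaker than local smoothing. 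Your verifications of ${\rm H1}$, ${\rm H2}$, ${\rm H4}$ for this phase are correct but vacuous for the purpose at hand: every translation-invariant phase trivially satisfies ${\rm H4}$, and Conjecture~\ref{coj} restricted to such phases is nothing more than the restriction conjecture.

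You correctly locate the remedy in your ``Main obstacle''---pass to the kernel by stationary phase and treat the propagator as a variable-coefficient oscillatory integral in the physical variable $y$---and this is exactly the route the paper takes. However, after that step the phase is no longer your $\phi$: it is the Carleson--Sj\"olin phase $(x-y)\cdot g\bigl(\tfrac{x-y}{t}\bigr)+t\,\psi\bigl(g(\tfrac{x-y}{t})\bigr)$ with $g=(\nabla\psi)^{-1}$, which is genuinely variable-coefficient in $(x,t)$. The entire non-trivial content of the argument---the reason one needs Conjecture~\ref{coj} and not merely the restriction conjecture---is to verify that \emph{this} phase satisfies ${\rm H4}$. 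The paper does so via the change of variables $x/t\to x$, $1/t\to t$, after which (as computed in Section~\ref{sub1} for the analogous multiplier phase) the Gauss map becomes $(y,1)/\sqrt{1+|y|^2}$, manifestly independent of the spatial variables. Your proposal neither identifies this phase nor carries out this check; the assertion that the stationary-phase main term is ``the FIO associated with the rescaled $\phi$'' is incorrect, and without the explicit change of variables the straight condition is not visible.
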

We will show Conjecture \ref{coj} implies Conjecture \ref{coj-sch}. Indeed, following the reduction in \cite{GMZ}, up to the endpoint regularity, to show \eqref{eq-09}, it suffices to prove 
\beq\label{eq:main1}
\|e^{it\psi(D)} f\|_{L_{x,t}^p(B^{n}_{R^2}\times [R^2/2,R^2])}\lesssim_\varepsilon R^{2 n(\f{1}{2}-\f{1}{p})+\varepsilon}\|f\|_{L^p(\R^n)},\quad \; {\rm supp} ~\hat{f}\subset
B_1^n(0),
\eeq
where $\psi$ also satisfies 
\begin{itemize}
	\item $\psi(0)=0,\nabla\psi(0)=0$;
	\item  For $\xi_0 \in B_1^n(0)$, the absolute value of all eigenvalues of the Hessian $\big(\frac{\partial^2\psi}{\partial \xi_i\partial \xi_j}\big)|_{\xi=\xi_0}$  falls into $ [1/2,1)$.
\end{itemize}

By a localization argument,  we may also assume ${\rm supp}f\subset B_{R^2}^n$.
Note that 
\beqq
e^{it\psi(D)}f(x)=\int_{\R^n}\int_{\R^n}e^{2\pi i\big((x-y)\cdot \xi+t\psi(\xi)\big)}a(\xi)f(y)d\xi dy,
\eeqq
where $a\in C_c^\infty({B_2^n(0)})$. We denote $K(x,t,y)$ the kernel  of the operator $e^{it\psi(D)}$, then 
\beqq
K(x,t,y)=\int_{\R^n}e^{2\pi i((x-y)\cdot \xi+t\psi(\xi))}a(\xi)d\xi.
\eeqq
Through a standard stationary phase argument, we have 
\beqq
K(x,t,y)\sim |t|^{-\tfrac{n}{2}}e^{2\pi i\big((x-y)\cdot g(\tfrac{x-y}{t})+t\psi(g(\frac{x-y}{t}))\big)}a(\tfrac{x-y}{t}),
\eeqq
where $g$ is defined as in \eqref{eq:002} with $n-1$ being replaced by $n$.
Note $t\sim R^2$, therefore, it suffices to consider the following oscillatory integral operators
\beqq
R^{-n}\int_{\R^n}e^{2\pi i((x-y)\cdot g(\tfrac{x-y}{t})+t\psi(g(\frac{x-y}{t})))}a(\tfrac{x-y}{t})f(y)dy.
\eeqq
By changing of variables, 
$$x\rightarrow R^2 x, t\rightarrow R^2t, y\rightarrow R^2 y,$$
we have 
\beq
\begin{aligned}
	&\Big\|\int_{\R^n}e^{2\pi i\big((x-y)\cdot g(\tfrac{x-y}{t})+t\psi(g(\frac{x-y}{t}))\big)}a(\tfrac{x-y}{t})f(y)dy\Big\|_{L^p(B_{R^2}^{n}\times [R^2/2,R^2])}\\
	&\lesssim R^{2n+\tfrac{2(n+1)}{p}}\Big\|\int_{\R^n}e^{2\pi iR^2\big((x-y)\cdot g(\tfrac{x-y}{t})+t\psi(g(\frac{x-y}{t}))\big)}a(\tfrac{x-y}{t})f(R^2y)dy\Big\|_{L^p(B_{1}^{n}\times [1/2,1])}.
\end{aligned}
\eeq
Performing change of variables as follows
\beqq
\frac{x}{t}\rightarrow x,\quad \frac{1}{t}\rightarrow t,
\eeqq
the corresponding  phase under the new coordinates becomes 
\beqq
\Psi(x,t,y):=\big(\frac{x}{t}-y\big)g(x-ty)+\frac{1}{t}\psi(g(x-ty)).
\eeqq
By our assumption, $\Psi(x,t,y)$ satisfies the straight condition, as well as the Carleson-Sj\"olin conditions. Therefore, by Conjecture \ref{coj}, we have
\beq
\begin{aligned}
	\Big\|\int_{\R^n}e^{2\pi iR^2\big((x-y)\cdot g(\tfrac{x-y}{t})+t\psi(g(\frac{x-y}{t}))\big)}a(\tfrac{x-y}{t})f(R^2y)dy\Big\|_{L^p(B_{1}^{n}\times[1/2,1])}&\lesssim R^{-\tfrac{2(n+1)}{p}}\|f(R^2\cdot)\|_{L^p(B_1^{n})}\\
	&\lesssim R^{-\tfrac{2(n+1)}{p}-\tfrac{2n}{p}}\|f\|_{L^p(B_{R^2}^n)}.
\end{aligned}
\eeq
Thus, we complete the proof.

It should be noted that  Gan-Oh-Wu\cite{GOW} considered the local smoothing problem for the fractional Schr\"odinger equation via  a different approach  and mentioned  essentially the same method as above discussed. Furthermore,  it is possible  to further improve Gan-Oh-Wu's result by considering the H\"ormanger type operator with the convex and straight conditions using Wang's method \cite{Wang1} at least in dimension $n=2$.

\subsection{Sharp resolvent estimates outside of the uniform boundedness range }The resolvent estimate for the Laplatian is of the form
\beq\label{eq:02}
\|(-\Delta-z)^{-1}f\|_{L^q(\R^n)}\leq C(z,p,q)\|f\|_{L^p(\R^n)},\quad \forall z\in \mathbb{Z}\backslash [0,\infty).
\eeq
This inequality and its variants have  been applied to study the problems of uniform Sobolev estimates, unique continuation properties and limiting absorption principles, etc, one may refer to \cite{EKL,KSR,GS} for more  details.

Let's briefly review the results related to \eqref{eq:02}. In \cite{KSR}, Kenig, Ruiz and Sogge showed, for $z\in \mathbb{C}\backslash [0,\infty)$ and $$\frac{1}{p}-\frac{1}{q}=\frac{2}{n},\quad  \frac{2n}{n+3}<p<\frac{2n}{n+1},$$ with $n\geq 3$, the constant $C(p,q,n)>0$ can be obtained independent of $z$. By homogeneity,  a simple calculation shows that 
\beqq
\|(-\Delta-z)^{-1}\|_{p\rightarrow q}=|z|^{-1+\frac{n}{2}(\frac{1}{p}-\frac{1}{q})}\Big\|(-\Delta-\frac{z}{|z|})^{-1}\Big\|_{p\rightarrow q},\quad \forall z\in \mathbb{C}\backslash [0,\infty).
\eeqq
For $z\in \mathbb{S}^1\backslash \{1\}$, Guti$\acute{e}$rrez \cite{Guti} obtained the optimal range of  $p,q$ with $n\geq 3$ in the sense that the constant $C(p,q,n)$ is independent of $z$. To be more precise, if $z\in \mathbb{S}^1\backslash \{1\}$ and  $(\frac{1}{p}, \frac{1}{q})$ lies in the set 
\beqq
\Big\{(x,y): \frac{2}{n+1}\leq x-y\leq \frac{2}{n}, x>\frac{n+1}{2n}, y<\frac{n-1}{2n}\Big\},\quad n\geq 3,
\eeqq
the sharp constant $C(p,q,z)$ in \eqref{eq:02} can be obtained uniformly independent of $z$. 

To formally state results regarding $C(z,p,q)$ with $z\in \mathbb{S}^1\backslash \{1\}$ and  $(\frac1p,\frac1q)$ lying  outside of the uniform boundedness range,  let's firstly introduce some notations.
Let $I^2$ be a closed square defined by 
\beqq
I^2:=\{(x,y)\in \R^2: 0\leq x,y\leq 1\}.
\eeqq
For each $(x,y)\in I^2$, define 
\beqq
(x,y)':=(1-x,1-y).
\eeqq
Similarly, for any subset $\mathcal{R}\subset I^2$, define $\mathcal{R}'$ to be 
\beqq
\mathcal{R}':=\{(x,y)\in I^2:(x,y)'\in \mathcal{R}\}.
\eeqq
\begin{definition}
	For $X_1,\cdots,X_\ell \in I^2$, we denote by $[X_1,\cdots,X_\ell]$ the convex hull of the points $X_1,\cdots, X_\ell$. In particular, $[X,Y]$ will denote the closed line segment jointing $X$ and $Y$. We also denote by  $(X,Y)$ and $[X,Y)$ for the open interval $[X,Y]\backslash\{X,Y\}$ and the half-open interval $[X,Y]\backslash\{Y\}$ respectively.
\end{definition}
Set $C=(\frac12,\frac12)$ and 
\begin{align*}
	&B:=(\frac{n+1}{2n},\frac{(n-1)^2}{2n(n+1)}),\quad B':=(\frac{n^2+4n-1}{2n(n+1)},\frac{n-1}{2n}),\\
	&D:=(\frac{n-1}{2n},\frac{n-1}{2n}),\qquad \quad D':=(\frac{n+1}{2n},\frac{n+1}{2n}),\\
	&E:=(\frac{n+1}{2n},0),\qquad\qquad\quad  E':=(\frac{n-1}{2n},1),
\end{align*}
and 
\[\mathcal{R}_0=\mathcal{R}_0(n):= 
\begin{cases}
	\big\{ (x,y) : 0\le x, y\le 1, ~ 0\le x-y <1 \big\}
	&\text{ if } ~  n=2,\\[5pt]
	\big\{(x,y) : 0\le x, y\le 1,~ 0\le x-y \le  \frac2n\big\} \setminus \big\{ \big(1, \frac{n-2}n\big) , \big(\frac 2n, 0\big) \big\}	
	&\text{ if } ~  n\ge 3.
\end{cases}	\]
It is conjectured that:
\begin{conjecture}\label{conjecture}
	Let $n\geq 2$. If $(\frac1p,\frac1q)$ lies in $\mathcal{R}_0\backslash \big([B, E]\cup [B',E']\cup [D,C)\cup [D',C)\big)$,  
	then 
	\begin{equation}\label{con1}	
		\|(-\Delta-z)^{-1}\|_{p\to q}\simeq_{p,q,n} |z|^{-1+\frac n2(\frac 1p-\frac 1q)+\gamma_{p,q}}  \dist(z,[0,\infty))^{-\gamma_{p,q}}
	\end{equation}
	holds for $z\in\mathbb{C}\backslash [0,\infty)$,
	where ${\mathlarger \gamma}_{p,q}$ is defined as follows:
	\begin{equation}\label{def-gamma}
		\gamma_{p,q}: =\max \Big\{ 0, ~ 1-\frac{n+1}2\Big(\frac 1p-\frac 1q \Big),~ \frac{n+1}2-\frac np, ~ \frac nq-\frac{n-1}{2} \Big\}.
	\end{equation}
\end{conjecture}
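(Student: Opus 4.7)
\emph{Proof proposal.} My plan is to reduce \eqref{con1} to dyadic pieces of Bochner--Riesz type on the unit sphere, each controlled by Conjecture~\ref{coj} via stationary phase, paralleling the multiplier analysis in Subsection~\ref{sub1}. By the homogeneity
\beqq
\|(-\Delta-z)^{-1}\|_{p\to q}=|z|^{-1+\frac{n}{2}(\frac{1}{p}-\frac{1}{q})}\big\|(-\Delta-z/|z|)^{-1}\big\|_{p\to q},
\eeqq
it suffices to fix $z=1+i\epsilon\in\mathbb{S}^{1}\setminus\{1\}$ with $\epsilon\sim\dist(z,[0,\infty))$ and prove $\|(-\Delta-z)^{-1}\|_{p\to q}\lesssim \epsilon^{-\gamma_{p,q}}$ (the regime $|z-1|\gtrsim 1$ being straightforward since the multiplier is smooth). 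I would split $m(\xi)=(|\xi|^{2}-1-i\epsilon)^{-1}$ into the part supported away from the unit sphere, controlled by Mikhlin--H\"ormander, and a near-sphere part $\sum_{2^{-j}\geq \epsilon}m_{j}$ with $m_{j}$ supported on the shell $\{||\xi|^{2}-1|\sim 2^{-j}\}$ and $\|m_{j}\|_{\infty}\lesssim 2^{j}$.

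For each $m_{j}$, after parametrizing the sphere locally as $\xi_{n}=\psi(\xi')$ with $\psi$ having positive-definite Hessian and running the stationary-phase expansion of Subsection~\ref{sub1}, the change of variables $\frac{x'}{x_{n}}\mapsto x'$, $\frac{1}{x_{n}}\mapsto x_{n}$ turns convolution with the localized kernel of $m_{j}$ on a scale-$2^{j}$ ball into an oscillatory integral operator of H\"ormander type with phase
\beqq
\Psi(x,y')=\big(\tfrac{x'}{x_{n}}-y'\big)\cdot g(x'-x_{n}y')+\tfrac{1}{x_{n}}\psi(g(x'-x_{n}y')).
\eeqq
As noted in Subsection~\ref{sub1}, $\Psi$ verifies H1)--H2), the convex condition H3) (because the sphere has positive-definite second fundamental form), and the straight condition H4) (since $G(x,y')=(y',1)/\sqrt{1+|y'|^{2}}$ is $x$-independent), so Conjecture~\ref{coj} yields an $L^{p}\to L^{p}$ bound for the rescaled operator $T^{2^{j}}$. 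Interpolating this estimate and its dual with the trivial $L^{2}\to L^{2}$ bound of size $2^{j}$, I obtain an $L^{p}\to L^{q}$ estimate for convolution with $m_{j}$ of strength $2^{-j\alpha(p,q)}$ for a suitable linear function $\alpha$ of $(\tfrac1p,\tfrac1q)$.

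Summing over $1\leq 2^{j}\leq \epsilon^{-1}$ would then produce the desired bound $\epsilon^{-\gamma_{p,q}}$: the four candidates in the maximum of \eqref{def-gamma} correspond, respectively, to the uniform boundedness range of Guti\'errez (where the series converges), the extension-type exponent coming from Conjecture~\ref{coj}, and the two Bochner--Riesz thresholds; the excluded segments $[B,E]$, $[B',E']$, $[D,C)$, $[D',C)$ are precisely those where two of the four candidates coincide and the dyadic sum incurs a logarithm. The matching lower bound $\gtrsim$ in \eqref{con1} should follow from standard test functions: Knapp wave packets adapted to spherical caps of radius $\sqrt{\epsilon}$ for the extension term, radial inputs with Fourier support near the sphere for the Bochner--Riesz terms, and a trivial input for the $0$ candidate. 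The main obstacles I foresee are (i) justifying the local-to-global passage in the stationary-phase change of variables, so that Schwartz-type remainders sum to an acceptable error uniformly in $j$, and (ii) keeping the interpolation sharp at every dyadic scale, so that the envelope matches $\gamma_{p,q}$ exactly rather than with an $\epsilon^{o(1)}$ loss, across all four sub-regimes of $\mathcal{R}_{0}$.
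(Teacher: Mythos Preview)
The statement you are attempting is a \emph{conjecture}; the paper does not prove it unconditionally. What the paper does establish (conditionally on Conjecture~\ref{coj}) is Theorem~\ref{th-resolvent}, namely the diagonal case $p=q>\tfrac{2n}{n-1}$, via exactly the Carleson--Sj\"olin reduction and stationary-phase argument you describe. It then combines this with the restricted weak-type endpoint~\eqref{eq:003} at $B,B'$ from Kwon--Lee and interpolates, concluding that Conjecture~\ref{coj} implies Conjecture~\ref{conjecture} \emph{only up to} the open segments $(B,D)\cup(B',D')$. Your diagonal analysis matches the paper's, but your passage to the off-diagonal region has a concrete gap.

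You write that interpolating the $L^p\to L^p$ bound from Conjecture~\ref{coj}, its dual $L^{p'}\to L^{p'}$, and the trivial $L^2\to L^2$ bound gives an $L^p\to L^q$ estimate for each dyadic piece $m_j$. But all three of these bounds lie on the diagonal $\tfrac1p=\tfrac1q$; their convex hull under Riesz--Thorin is still the diagonal, so no genuinely off-diagonal estimate can come out of this step. To reach the off-diagonal part of $\mathcal{R}_0$ one needs an additional input, such as the Stein--Tomas $L^2$-restriction bound for $d\sigma$ (equivalently, the $L^p\to L^{p'}$ bound for $m_j$ at $p'=\tfrac{2(n+1)}{n-1}$), or the restricted weak-type bound at $B$ from~\eqref{eq:003}; the paper uses the latter. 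Even after supplying such an input and interpolating correctly, the paper explicitly records that the segments $(B,D)$ and $(B',D')$ remain uncovered, so your claim that the dyadic summation yields the full conjecture would require an argument the paper does not possess. You should either identify the extra off-diagonal bound you intend to interpolate with and explain why the resulting envelope closes on $(B,D)\cup(B',D')$, or, like the paper, state the conclusion with that caveat.
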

\begin{remark}
	If $(\frac{1}{p},\frac{1}{q})\in \{B,B'\}$, the  restricted weak type estimate 
	\beq\label{eq:003}
	\|(-\Delta-z)^{-1}f\|_{q,\infty}\leq C|z|^{-1+\frac{n}{n+1}}\|f\|_{p,1}
	\eeq
	holds. One may refer to \cite{KL} for more details.
\end{remark}

From \cite{KL}, we know that the lower bounded of \eqref{con1} is true for all $n\geq 2$. For $n=2$, this conjecture has been completely established, one may refer to \cite{KL}. However, for $n\geq 3$, only partial positive results of Conjecture \ref{conjecture} have been proved, see Figure \ref{figthm}. More presicely, for $n\geq 3$, setting 
\[\mathcal{R}_1:=[P_*, P_0,C]\setminus\{C\}\]
Kown-Lee \cite{KL} showed Conjecture \ref{con1} holds except for $(\frac1p,\frac1q)\in \mathcal{R}_1\cup \mathcal{R}_1'$.  Among other things, following the proof of Proposition 4.1  in \cite{KL}, we have

\begin{figure}
	\captionsetup{type=figure,font=footnotesize}
	\centering
	\begin{tikzpicture} [scale=0.6]\scriptsize
		\path [fill=mypink3] (50/11, 40/11)--(15/4,15/4)--(10-15/4,10-15/4)--(10-40/11,10-50/11)--(5,5)node[above]{$C$};
		\path [fill=mypink1] (0,0)--(15/4,15/4)--(50/11, 40/11)--(5,5)--(10-40/11,10-50/11)--(10-15/4,10-15/4)--(10,10)--(10,6)--(4,0)--(0,0);
		\draw [<->] (0,10.7)node[above]{$y$}--(0,0) node[below]{$(0,0)$}--(10.7,0) node[right]{$x$};
		\draw (0,10) --(10,10)--(10,0) node[below]{$(1,0)$};
		\draw [dash pattern={on 2pt off 1pt}] (4,4)--(50/11, 40/11);
		\draw [dash pattern={on 2pt off 1pt}] (6,6)--(10-40/11, 10-50/11);
		\draw  (47/12,47/12)--(4,4);
		\draw  (10-47/12,10-47/12)--(6,6);
		\draw [mypink3] (4,4)--(15/4,15/4)--(50/11, 40/11)--(5,5)--(10-40/11, 10-50/11)--(10-15/4,10-15/4)--(6,6);
		\draw (50/11, 40/11)--(6,8/3)node[above]{$B$}--(10-8/3,4)node[left]{$B'$}--(10-40/11, 10-50/11);
		\draw [dash pattern={on 2pt off 1pt}] (4,4)node[above]{$D$}--(6,6)node[above]{$D'$};
		\draw (0,0)--(15/4,15/4);
		\draw (10-15/4,10-15/4)--(10,10);
		\draw (4,0)node[below]{$\frac 2n$}--(10,6);
		\draw [dash pattern={on 2pt off 1pt}]  (0,5)node[left]{$\frac12$}--(5,5)--(5,0)node[below]{$\frac12$}; 
		\draw [dash pattern={on 2pt off 1pt}] (6, 8/3)--(6, 2)--(6,0)node[below]{$E$};
		\draw [dash pattern={on 2pt off 1pt},] (10-8/3, 4)--(8, 4)--(10,4)node[right]{$E'$};
		
		\draw (15/4,15/4) node[left]{$P_*$};
		\draw (50/11, 40/11)  node[below] {$P_0$};
		
		\draw [color=mypink3,fill=mypink3] (11.6, 7.5) rectangle (13, 8.1);
		\draw (14.0, 7.75) node{$\mathcal{R}_1\cup \mathcal{R}_1'$};
		\draw [color=mypink1,fill=mypink1] (11.6, 6.5) rectangle (13, 7.1);
		\draw (15.9, 6.75) node{previously known results};
		
		\draw (15.8, 4.5) node{$P_*:=\begin{cases}(\frac{3(n-1)}{2(3n+1)},\frac{3(n-1)}{2(3n+1)}),\quad n \;\;\text{odd}\\
				(\frac{3n-2}{2(3n+2)},\frac{3n-2}{2(3n+2)}),\quad n\;\;\text{even}\end{cases}$};
		\draw (16.6, 2.5) node{$P_0:=\begin{cases}(\frac{(n+5)(n-1)}{2(n^2+4n-1)},\frac{(n-1)(n+3)}{2(n^2+4n-1)}),\quad n \;\;\text{odd}\\
				(\frac{n^2+3n-6}{2(n^2+3n-2)},\frac{(n-1)(n+2)}{2(n^2+3n-2)}),\quad n\;\;\text{even}\end{cases}$};
	\end{tikzpicture}\caption{}\label{figthm}
\end{figure}
\begin{theorem}\label{th-resolvent}
	Let $T^\lambda$ be a H\"ormander type operator with the convex and straight conditions.  If $p>\frac{2n}{n-1}$ and 
	\beq\label{eq:3a}
	\|T^\lambda f\|_{L^p(\R^n)}\lesssim \|f\|_{L^p(B^{n-1}_1(0))},
	\eeq
	then
	\begin{equation}\label{eq:aaa}	
		\|(-\Delta-z)^{-1}\|_{p\to p}\lesssim_{p,n} |z|^{-1+\gamma_{p,p}}  \dist(z,[0,\infty))^{-\gamma_{p,p}}.
	\end{equation}
\end{theorem}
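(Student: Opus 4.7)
The plan is to follow the argument of Proposition~4.1 in Kwon--Lee \cite{KL}, which reduces resolvent estimates to $L^p$ bounds for H\"ormander-type operators satisfying the convex and straight conditions. By the standard homogeneity identity $\|(-\Delta-z)^{-1}\|_{p\to p}=|z|^{-1}\|(-\Delta-z/|z|)^{-1}\|_{p\to p}$, the task first reduces to the case $|z|=1$. Writing $z=e^{i\theta}$ with $\theta\neq 0$ one has $\dist(z,[0,\infty))\sim|\sin\theta|$; a direct inspection of \eqref{def-gamma} under the restriction $p>\frac{2n}{n-1}$ yields $\gamma_{p,p}=\frac{n+1}{2}-\frac{n}{p}$, so the goal becomes
\[\bigl\|(-\Delta-e^{i\theta})^{-1}\bigr\|_{p\to p}\lesssim|\sin\theta|^{-(\frac{n+1}{2}-\frac{n}{p})}.\]

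Next, I would decompose the symbol $(|\xi|^2-z)^{-1}$ dyadically with respect to distance from the characteristic sphere $|\xi|=\cos(\theta/2)$. Taking a Littlewood--Paley partition $\{\eta_j\}_{j=0}^{J}$ with $J\sim\log(1/|\sin\theta|)$, where $\eta_j$ is supported where $\bigl||\xi|-\cos(\theta/2)\bigr|\sim 2^{-j}$, together with a terminal piece $\eta_{J+1}$ for $\bigl||\xi|-\cos(\theta/2)\bigr|\lesssim|\sin\theta|$ and harmless smooth cutoffs away from the unit sphere (handled via Mikhlin's theorem), I would set $m_j(\xi):=\eta_j(\xi)(|\xi|^2-z)^{-1}$. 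On the support of $\eta_j$ one has $|m_j|\sim 2^j$, with support in a shell of thickness $2^{-j}$.

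The crucial step is to identify each $m_j(D)$, after rescaling, with a H\"ormander-type operator $T^{\lambda}$ at scale $\lambda=2^j$. Parametrizing the sphere locally as a graph $\xi_n=\psi(\xi')=\sqrt{\cos^2(\theta/2)-|\xi'|^2}$ and computing the kernel of $m_j(D)$ by stationary phase exactly as in the analysis of Subsection~\ref{sub1} (via Lemma~\ref{station}), one obtains an oscillatory integral operator whose phase, after the changes of variables $x/t\to x$, $1/t\to t$, is precisely the $\Psi(x,y')$ of \eqref{phase1}. Since $\psi$ has positive definite Hessian on the sphere and the associated Gauss map $G(x,y')=(y',1)/\sqrt{1+|y'|^2}$ does not depend on the spatial variable $x$, the Carleson--Sj\"olin conditions H1)--H2) and the convex and straight conditions H3)--H4) are simultaneously verified. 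Invoking the hypothesis \eqref{eq:3a} at scale $\lambda=2^j$, together with the amplitude factor $2^j$ from $\|m_j\|_\infty$ and the natural scaling Jacobians, yields $\|m_j(D)\|_{p\to p}\lesssim 2^{j(\frac{n+1}{2}-\frac{n}{p})}$.

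Since the exponent $\frac{n+1}{2}-\frac{n}{p}$ is positive in the hypothesized range $p>\frac{2n}{n-1}$, summing this geometric bound over $0\le j\le J$ is controlled by its largest term and produces the desired $|\sin\theta|^{-\gamma_{p,p}}$; the terminal piece $m_{J+1}(D)$ contributes a bound of the same order via the same reduction at scale $\lambda\sim|\sin\theta|^{-1}$. The main obstacle lies in this final bookkeeping: one must verify that the rescaled $m_j(D)$ really produces a H\"ormander-type operator matching the hypothesis uniformly in $\theta$ and $j$, and that the amplitude $2^j$ of $m_j$ combines with the various Jacobian factors to produce exactly the exponent $\gamma_{p,p}$. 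This analysis has been carried out in detail in \cite{KL} for the Bochner--Riesz setting and parallels the multiplier argument of Subsection~\ref{sub1}; the present proof amounts essentially to transcribing those reductions under the single hypothesis \eqref{eq:3a} in place of Theorem~\ref{theo2}.
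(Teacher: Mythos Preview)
Your proposal is correct and follows essentially the same route as the paper: the paper's own argument is only sketched in the remark following Theorem~\ref{th-resolvent}, where it reduces \eqref{eq:aaa} (via homogeneity and the localization $z=1+i\delta$) to the multiplier estimate $\bigl\|\mathcal{F}^{-1}\bigl(\tilde\chi(\xi)\hat f(\xi)/(|\xi|^2-1-i\delta)\bigr)\bigr\|_p\lesssim|\delta|^{-\gamma_{p,p}}\|f\|_p$, then invokes the Carleson--Sj\"olin reduction of Subsection~\ref{sub1} to obtain oscillatory integral operators with the phase $\Psi$ of \eqref{phase1}, and finally applies the hypothesis \eqref{eq:3a} since $\Psi$ satisfies H1)--H4) up to a spatial diffeomorphism. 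Your dyadic decomposition, stationary-phase identification of each piece with a scale-$2^j$ H\"ormander operator, and geometric summation are exactly the contents of Proposition~4.1 in \cite{KL} that the paper cites; the only cosmetic difference is your parametrization $z=e^{i\theta}$ in place of $z=1+i\delta$.
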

\begin{remark}
	Indeed, the proof of \eqref{eq:aaa} can be reduced to showing a multiplier estimate 
	\beqq
	\Big\|\mathcal{F}^{-1}\Big(\frac{\tilde{\chi}(\xi)\hat{f}(\xi)}{|\xi|^2-1-i\delta}\Big)\Big\|_p\lesssim |\delta|^{-\gamma_{p,p}}\|f\|_p
	\eeqq
	where$\tilde{\chi}\in C_0^\infty(1-2\delta_0,1+2\delta_0)$ for a small $\delta_0>0$ and $0<|\delta|\ll 1$. Using the Carleson-Sj\"olin reduction as displayed in subsection \ref{sub1}, an important ingredient in the approach is the following oscillatory integral operator estimate 
	\beqq
	\big\|\int_{\R^{n-1}} e^{2\pi i \Psi(x,y')}a(x,y')f(y')d y'\big\|_{L^p(\R^n)}\lesssim \|f\|_{L^p},
	\eeqq
	where $\Psi$ is defined as in \eqref{phase1} and $a\in C_c^\infty(B_1^{n}(0)\times B_1^{n-1}(0))$.  Since $\Psi(x,y')$, up to a diffemorphism in $x$, satisfies the conditions ${\rm H}_1,{\rm H}_2,{\rm H}_3,{\rm H}_4$, we may apply  Conjecture \ref{coj} to get the desired results.
\end{remark}

As a direct consequence of Theorem \ref{th-resolvent} and \eqref{eq:003}, by interpolation and the epsilon removal arguments, up to a pair of  intervals $(B,D)\cup (B',D')$, we obtain Conjecture \ref{coj} implies Conjecture \ref{conjecture}. Furthermore, we may  use the new oscillatory integral estimates in Theorem \ref{theo} to further improve the range of $p$ in \eqref{eq:aaa}.

\section{Reductions}\label{reduc}
Typically speaking,  the phase   $\phi(x,\xi)$ which satisfies  the conditions ${\rm H}_1,{\rm H}_2, {\rm H}_3,{\rm H}_4$ can be viewed as a small perturbation of the translation invariant  case. More precisely, through changing of variables,  it can  be rewritten as 
\beq\label{eq:09}
\phi(x,\xi)=\langle x',\xi\rangle+x_nh(\xi)+\mathcal{ E}(x,\xi),
\eeq 
where $h$ and $\mathcal{E}$ are smooth functions, $h$ is quadratic in $\xi$ and $\mathcal{ E}$ is quadratic in $x,\xi$. However, under the new coordinate, the formula of $\phi$ in \eqref{eq:09} may not satisfy the straight condition, even though ${\rm H}_1,{\rm H}_2,{\rm H}_3$ can  be ensured. In other words, the straight condition may not be kept under a general  diffeomorphism  in the spatial variables. Therefore, we should be careful when performing the change of variables in $x$ and,meanwhile, keeping  track  of the straight condition.

{\bf \noindent Basic reductions}
As mentioned above, the straight condition may be destroyed while performing a  diffeomorphism with respect to the spatial variables, which inspires us  to consider a wider class of functions which, upon a diffeomorphism in the spatial variables, satisfy  the straight condition. To formalize that, we introduce a notion of ${\bf \Phi}_{\rm cs}$. 
\begin{definition}
We say a function $\phi(x,\xi)$ lies in the class ${\bf \Phi}_{\rm cs}$,  if, modulo a diffeomorphism in the spatial variables $x$,  $\phi(x,\xi)$ satisfies the conditions ${\rm H}_1,{\rm H}_2,{\rm H}_3,{\rm H}_4$.
\end{definition} 
\begin{remark}
In terms of ${\bf \Phi}_{\rm cs}$, it is an interesting problem to investigate the influence of  the higher order terms of $\phi(x,\xi)$ in $x$.  	
\end{remark}
\begin{example}
In \cite{Bourgain1}, Bourgain disproved Hormander's conjecture by constructing a counterexample where the Kakeya compression phenomena happen which roughly say  that  the main contribution to  the oscillatory integral may be concentrated  in a lower dimensional submanifold.  Next we will analyse Bourgain's counterexample to vividly show that there does not exist a diffeomorphism  in the spatial variables such that the Gauss map $G(x,\xi)$ is invariant when $x$ changes.

Let $$P(x,y)=x_1y_1+x_2y_2+2x_3y_1y_2+x_3^2y_1^2.$$
Assume that there exists a diffeomorphism 
\beqq
x\rightarrow \kappa(\tilde x), 
\eeqq such that  for given $\xi$, the associated $G(x,\xi)$ keeps invariant when $x$ changes,
where $x=(x_1,x_2,x_3)=(\kappa_1(\tilde x),\kappa_2(\tilde x),\kappa_3(\tilde x))$.
Then the tangent space of the hypersurface $\{\partial_{\tilde x}P(\tilde x,y): y\in B_{1}^{n-1}(0)\}$ at point $(\tilde x,y)$ can be spanned by the following two linear independent 
vectors 
\beqq
\partial_{xy}P(x,y)|_{x=\kappa(\tilde x)}\big(\frac{\partial \kappa}{\partial \tilde x}\big),
\eeqq
where
\begin{equation}
	\partial_{xy}P(x,y)=\left( \begin{array}{ccc}
		1 & 0&2y_2+4x_3y_1\\
		0 & 1 & 2y_1
	\end{array}\right) .
\end{equation}
We claim that there does not exist a diffeomorphism $\kappa$ such that
\begin{equation} \label{eq}
	\left( \begin{array}{ccc}
		1 & 0&2y_2+4\kappa_3(\tilde x)y_1\\
		0 & 1 & 2y_1
	\end{array}\right)\big(\frac{\partial \kappa}{\partial \tilde x}\big)=\left( \begin{array}{ccc}
		1 & 0&C_1(y_1,y_2)\\
		0 & 1 & C_2(y_1,y_2)
	\end{array}\right),
\end{equation}
where $C_1(y_1,y_2),C_2(y_1,y_2)$  only depend  on $y_1,y_2$. Indeed, if  \eqref{eq} holds, we have
\begin{align*}
	& \frac{\partial\kappa_1(\tilde x)}{\partial\tilde{x}_1}+\left(2y_2+4\kappa_3(\tilde x)y_1\right)\frac{\partial\kappa_3(\tilde x)}{\partial\tilde{x}_1}=1,\\
	& \frac{\partial\kappa_1(\tilde x)}{\partial\tilde{x}_2}+\left(2y_2+4\kappa_3(\tilde x)y_1\right)\frac{\partial\kappa_3(\tilde x)}{\partial\tilde{x}_2}=0,\\
	&\frac{\partial\kappa_1(\tilde x)}{\partial\tilde{x}_3}+\left(2y_2+4\kappa_3(\tilde x)y_1\right)\frac{\partial\kappa_3(\tilde x)}{\partial\tilde{x}_3}=C_1(y_1,y_2).
\end{align*}
By solving the  equations, one has $\kappa_1(\tilde x)=-2y_2\kappa_3(\tilde x)-2y_1\kappa_3(\tilde x)^2+\tilde{x}_1+C_1(y_1,y_2)\tilde{x}_3$.
Since $C_1(y_1,y_2),C_2(y_1,y_2)$ are constants when $y_1,y_2$ are fixed, we get 
$$\kappa_1(\tilde x)=-2\kappa_3(\tilde x)-2\kappa_3(\tilde x)^2+\tilde{x}_1+C_1(1,1)\tilde{x}_3=-\kappa_3(\tilde x)-\kappa_3(\tilde x)^2+\tilde{x}_1+C_1(1/2,1/2)\tilde{x}_3,$$
then $\kappa_3(\tilde x)+\kappa_3(\tilde x)^2=c_1\tilde{x}_3$, where $c_1=C_1(1,1)-C_1(1/2,1/2)$.
By the same argument, we have $\kappa_2(\tilde x)=-2y_1\kappa_3(\tilde x)+\tilde{x}_2+C_2(y_1,y_2)\tilde{x}_3$ and $\kappa_3(\tilde x)=c_2\tilde{x}_3$. Thus $c_2\tilde{x}_3+c_2^2\tilde{x}_3^2=c_1\tilde{x}_3$ holds for all $\tilde{x}_3\in\mathbb{R}$, which implies 
$c_1=c_2=0$ and  $\kappa_3(\tilde x)=0$, this is a contradiction since we assume  $\kappa$ is a diffemorphism.
\end{example}

In addition, we also assume some additional  quantitative conditions on $\phi$. Firstly, let's introduce a notion of \emph{reduced form}.
\begin{definition}
We say a function $\phi(x,\xi)$ is of  reduced form if $\phi \in{\bf \Phi}_{\rm cs}$ with the following conditions hold: let $\varepsilon>0$ be a fixed constant and $a(x,\xi)$ be  supported on $X\times \Omega$, where $X:=X'\times X_n$ and $X'\subset B^{n-1}_1(0), X_n\subset (-1,1)$ and $\Omega \subset B^{n-1}_1(0)$, upon which the phase $\phi$ has the form
\beqq
\phi(x,\xi)=\langle x',\xi\rangle+x_n h(\xi)+\mathcal{E}(x,\xi),
\eeqq
with 
\beq\label{eq:uni}
|\partial_x^\alpha \partial_\xi^\beta \phi(x,\xi)|\leq  C_{\alpha,\beta},\quad |\alpha|,|\beta|\leq N_{\rm par},
\eeq
here  $h$ and $\mathcal{E}$ are smooth functions and $h$ is quadratic in $\xi$, $\mathcal{E}$ is quadratic in $x,\xi$ and $N_{\rm par}$ is a given large constant.

Furthermore, $\phi$ also satisfies  the following conditions:
\begin{itemize}
	\item[${\bf C_1}:$] The eigenvalues of the Hessian $\big(\tfrac{\partial^2 h}{\partial x_i \partial x_j}\big)_{(n-1)\times(n-1)}$ all fall into $[1/2,2]$.
	\item[${\bf C_2}:$] Let  $c_{\rm par}>0$ be a small constant, $N_{\rm par}>0$ be a given large constant as above, 
	\beqq
	|\partial_x^\alpha \partial_\xi^\beta \mathcal{E}(x,\xi)|\leq c_{\rm par},\quad |\alpha|,|\beta|\leq N_{\rm par}.
	\eeqq
	\end{itemize}
	\end{definition}
Let $1\leq R\leq \lambda, T^\lambda$ be defined with the reduced form and $Q_p(\lambda, R)$ be the optimal constant such that 
\beq\label{eq:op}
\|T^\lambda f\|_{L^p(B_R^n(0))}\leq Q_p(\lambda,R)\|f\|_{L^2}^{\frac{2}{p}}\|f\|_{L^\infty}^{1-\frac{2}{p}}.
\eeq
We claim that the proof of Theorem \ref{theo} can be reduced to showing that for $p>p_n$ and for each $\varepsilon>0$,
\beq\label{eq:red}
Q_p(\lambda, R)\lesssim_{\varepsilon,p}R^{\varepsilon}.
\eeq
Indeed, we  firstly claim that:
 
{\bf Claim:} If $\mathcal{T}^\lambda$ is an operator satisfying  the conditions ${\rm H}_1,{\rm H}_2,{\rm H}_3,{\rm H}_4$, then 
\beq\label{claim1}
\|\mathcal{T}^\lambda f\|_{L^p(B_R^n(0))}\lesssim_{\phi}\|T^{\lambda\tilde{r}^2} \tilde{f}\|_{L^p(B_{CR}^n(0))},
\eeq
where $T^\lambda$ is defined with the reduced form, $\tilde r>0$ is an appropriate constant depending on $\phi$ and 
\beq\label{claim2}
\|\tilde{f}\|_{L^p}\lesssim_\phi  \|f\|_{L^p}.
\eeq
 We take the above claim for granted and prove  \eqref{eq:red} implies Theorem \ref{theo}.  To be more precise, we need to show  \eqref{eq:red} implies
\beqq
\|\mathcal{T}^\lambda f\|_{L^p(B_R^n(0))}\lesssim_{\varepsilon, p,\phi}R^\varepsilon \|f\|_{L^p}.
\eeqq
Indeed, by \eqref{eq:red},\eqref{claim1}, \eqref{claim2}, we have 
\beq
\|\mathcal{T}^\lambda f\|_{L^p(B_R^n(0))}\lesssim_{\varepsilon,p,\phi}R^\varepsilon \|f\|_{L^2}^{\frac{2}{p}}\|f\|_{L^\infty}^{1-\frac{2}{p}}.
\eeq
By taking  $f=\chi_E$,  we get
\beqq
\|\mathcal{T}^\lambda f\|_{L^p(B_R^n(0))}\lesssim_{\varepsilon,p,\phi}R^\varepsilon\|f\|_{L^p}.
\eeqq
Then the desired results follows by interpolation argument. Therefore, it suffices to verify the claim.
For convenience, we just need to track the phase when changing of variables.

The proof can be obtained by modifying the associated part in \cite{GHI}.  Without loss of generality, we may assume 
\beqq
\partial_x^\alpha \phi(x,0)=0,\quad \partial_\xi^\alpha \phi(0,\xi)=0, \quad \alpha\in \mathbb{Z}^n.
\eeqq
Otherwise, we take $\phi$ to be 
\beqq
\phi(x,\xi)+\phi(0,0)-\phi (0,\xi)-\phi(x,0).
\eeqq
By Taylor's formula, we have 
\beqq
\phi(x,\xi)=\partial_\xi \phi(x,0)\cdot \xi +\rho(x,\xi),
\eeqq
where $\rho(x,\xi)$ is quadratic in $\xi$.
By the condition ${\rm H}_1$,  we may assume ${\rm rank}\partial_{x'\xi}\phi=n-1$ and $G(0,0)=(0,\cdots,1)$, thus we may find a smooth function $\Phi(x',x_n,0)$ such that 
\beqq
\partial_\xi \phi(\Phi(x',x_n,0),x_n,0)=x'.
\eeqq
By our assumption, one may also get 
\beq\label{eq:1231}
\Phi(0,0)=0,\quad \partial_{x_n}\Phi(0,0)=0, \quad \partial_{x'}\Phi(0,0)=\partial_{x'\xi}^2\phi(0,0)^{-1}.
\eeq
By changing of variables 
\beqq
x'\longrightarrow \Phi(x',x_n,0), \quad x_n\longrightarrow x_n,
\eeqq
thus it suffices to consider
\beqq
\langle x',\xi \rangle+\rho(\Phi(x',x_n,0),x_n,\xi).
\eeqq
Then taking another expansion in $x$  and using \eqref{eq:1231} yield
\beq
\begin{aligned}
\rho(\Phi(x',x_n,0),x_n,\xi))&=\rho(\Phi(0,0),0,\xi)+\partial_{x'}\rho(0,\xi)\partial_{x'}\Phi(0)x'\\&+\big(\partial_{x_n}\rho\big)(0,\xi)x_n+O(|x|^2|\xi|^2).
\end{aligned}
\eeq
Finally, from \eqref{eq:1231}, one deduces that 
\beqq
\phi(x,\xi)=\langle x',\xi+\partial_{x'\xi}\phi(0,0)^{-T} \partial_{x'}\rho(0,\xi)\rangle+x_n \partial_{x_n} \rho (0,\xi)+O(|x|^2|\xi|^2).
\eeqq
Then by changing of variables
\beqq
\xi+\partial_{x'\xi}\phi(0,0)^{-T} \partial_{x'}\rho(0,\xi)\longrightarrow \xi,
\eeqq
and taking $h(\xi)=\partial_{x_n}\rho(0,\xi)$, we have 
\beqq
\phi(x,\xi)=\langle x',\xi \rangle+x_n h(\xi)+O(|x|^2 |\xi|^2).
\eeqq
Since $\Omega \subset B_1^{n-1}(0)$, we partition $\Omega$ into a  family of balls $\{B_\alpha\}$ of radius $r$ and center $\xi_\alpha$, such that 
\beqq
\Omega \subset \bigcup_\alpha B_\alpha.
\eeqq
By triangle inequality, it suffices to consider a single ball $B_\alpha$. By changing of variables 
\beqq
\xi \longrightarrow \tilde r\xi+\xi_\alpha,
\eeqq
where $\tilde r \geq r$,  under the new coordinates, we just need to consider $\xi \in B_{r/\tilde{r}}^{n-1}(0)$
and 
\beqq
e^{2\pi i\phi^\lambda(x,\xi_\alpha)}\int e^{2\pi i (\phi^\lambda(x,\xi)-\phi^\lambda(x,\xi_\alpha))}a^\lambda(x,\xi)f(\xi)d\xi.
\eeqq
Since $\phi(x,\xi)=\langle x',\xi \rangle+x_n h(\xi)+\mathcal{E}(x,\xi)$, we have 
\beqq
\phi^\lambda(x,\xi)-\phi^\lambda(x,\xi_\alpha)=\tilde{r} \partial_\xi\phi^\lambda(x,\xi_\alpha)\cdot \xi+\tilde{r}^2x_n\tilde h(\xi)+\tilde{r}^2\widetilde{\mathcal{E}}^\lambda(x,\xi),
\eeqq
where
\beq\label{change}
\begin{aligned}
\tilde{h}(\xi):&=\tilde{r}^{-2}(h(\tilde{r}\xi+\xi_\alpha)-h(\xi_\alpha)-\tilde{r}\partial_{\xi}h(\xi_\alpha)\cdot \xi)\\
\widetilde{\mathcal{E}}^\lambda(x,\xi):&=\tilde{r}^{-2}(\mathcal{E}^\lambda(x,\tilde{r}\xi+\xi_\alpha)-\mathcal{E}^\lambda(x,\xi_\alpha)-\tilde{r}\partial_{\xi}\mathcal{E}^\lambda(x,\xi_\alpha)\cdot \xi).
\end{aligned}
\eeq
By another change of variables in $x$ as follows
\beqq
x' \longrightarrow  \lambda\Phi\Big( \frac{x'}{\lambda\tilde{r}},\frac{x_n}{\lambda\tilde{r}^2},\xi_\alpha\Big), \quad  x_n\longrightarrow \tilde{r}^{-2} x_n,
\eeqq
finally, it suffices to consider
\beq\label{eq:213}
\tilde{\phi}:=\langle x',\xi\rangle+x_n \tilde{h}(\xi)+\bar{\mathcal{E}}^{\lambda \tilde{r}^2}(x,\xi),
\eeq
where $\bar{\mathcal{E}}(x,\xi):=\widetilde{\mathcal{E}}(\Phi(\tilde{r}x',x_n,\xi_\alpha),x_n,\xi)$.

From \eqref{change}, $\tilde{h}$ is quadratic in $\xi$ and $\bar{\mathcal{E}}(x,\xi)$ is quadratic in $x,\xi$. Furthermore, through an affine change of variables in $\xi$ and by choosing appropriate small $\tilde{r}$ such that $r/\tilde{r}$ is also sufficiently small, we may ensure the condition ${\rm C}_1$ and ${\rm C}_2$. Define $T^\lambda$  with the phase in \eqref{eq:213} and note that all the implicit constants arising when performing the change of variables depend on $\phi$, we will obtain \eqref{claim1} and \eqref{claim2}.
\subsection{Further remarks on the phase}
Let  $\phi$ be of the reduced form, by our assumption,  we may choose a smooth function $p(x)$ such that  $\phi(p(x),\xi)$ satisfies the  straight condition and
\beq\label{eq:uni1}
|\partial^\alpha_x \partial_\xi ^\beta \phi(p(x),\xi)|\leq \bar{C}_{\alpha,\beta},\quad |\alpha|, \beta|\leq N_{\rm par},
\eeq
uniformly.
Indeed, we may always choose a function $p:\R^{n-1}\rightarrow \R^{n-1}$  with $
|\partial_x p(x)|\lesssim1 $
such that $\phi(Ap(\frac{x}{A}),\xi)$ satisfies the straight condition where $A$ is large enough to ensure \eqref{eq:uni1}.

\section{wave packet decomposition}\label{wav} 

Let $r \geq 1$ and $\Theta_r$ be a collection of cubes  $\{\theta\}$ of sidelength $\frac{9}{11}r^{-1/2}$  and center $\xi_\theta$ which cover the ball $B_2^{n-1}(0)$. Correspondingly, we take a smooth partition of unity $\{\psi_\theta\}_{\theta\in \Theta_r}$ with respect to the cover $\Theta_r$. Let $\tilde \psi_\theta$ be a non-negative smooth cut-off function supported on $\frac{11}{9}\theta$ and equal to $1$ on $\frac{11}{10}\theta$.  Given a function $g$, by taking Fourier series expansion, we have 
\beqq
g(\xi)\psi_\theta(\xi)\cdot \tilde{\psi}_\theta(\xi)=\Big(\frac{r^{1/2}}{2\pi}\Big)^{n-1}\sum_{v\in r^{1/2}\mathbb{Z}^{n-1}}(g\psi_\theta)^{\widehat{}}(v)e^{2\pi i v\cdot \xi}\tilde{\psi}_\theta(\xi).
\eeqq
Define 
\beqq
g_{\theta,v}(\xi):=\Big(\frac{r^{1/2}}{2\pi}\Big)^{n-1}(g\psi_\theta)^{\widehat{}}(v)e^{2\pi i v\cdot \xi}\tilde{\psi}_\theta(\xi).
\eeqq
Correspondingly, we may make the following decomposition
\beqq
g=\sum_{(\theta,v)\in \Theta_r\times r^{1/2}\mathbb{Z}^{n-1}}g_{\theta,v}.
\eeqq
Let $1\leq r\leq R$ and $B_{r}^n(x_0)\subset B_R^n(0)$, define 
\beqq
\phi^\lambda_{x_0}(x,\xi):=\phi^\lambda(x,\xi)-\phi^\lambda(x_0,\xi).
\eeqq
By the assumption of the phase, there exists $\gamma_{\theta,v,x_0}^\lambda(x_n)$ such that 
\beqq
\partial_\xi\phi^\lambda_{x_0}(\gamma_{\theta,v,x_0}^\lambda(x_n),x_n,\xi_\theta)+v=0.
\eeqq
Given $\theta, v$, define a tube $T_{\theta,v}=T_{\theta,v}(x_0)$ to be 
\beq\label{wave-1}
T_{\theta,v}(x_0):=\{(x',x_n):|x'-\gamma_{\theta,v,x_0}^\lambda(x_n)|\lesssim r^{\frac{1+\delta}{2}}, |x_n-x_0^n|\leq Cr\},
\eeq
and
\beqq
g_{T_{\theta,v}}:=e^{-2\pi i \phi^\lambda(x_0,\xi)}(g(\cdot) e^{2\pi i\phi^\lambda(x_0,\cdot)})_{\theta,v}.
\eeqq
Thus, we have 
\beqq
T^\lambda g(x)=\sum_{\theta, v}T^\lambda g_{T_{\theta,v}}(x).
\eeqq
We define a collection of tubes associated to the ball $B_r^n(x_0)$ by 
\beqq
\mathbb{T}[B_r^n(x_0)]:=\{T_{\theta,v}(x_0):(\theta,v)\in \Theta_r\times r^{1/2}\mathbb{Z}^{n-1}\}.
\eeqq
The main contribution of $T^\lambda g_{T_{\theta,v}}$ is concentrated on $T_{\theta,v}$ and rapidly decays outside of the tube which can be manifested  in the following lemma.
\begin{lemma}
If $x \in B_r^n(x_0)\backslash T_{\theta,v}$, then 
\beqq
|T^\lambda g_{T_{\theta,v}}(x)|\lesssim_N (1+r^{-1/2}|\nabla_\xi\phi_{x_0}^\lambda(x,\xi_\theta)+v|)^{-N}{\rm RapDec}(r)\|g\|_{L^2}.
\eeqq
\end{lemma}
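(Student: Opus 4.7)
The plan is to treat $T^\lambda g_{T_{\theta,v}}(x)$ as an oscillatory integral in $\xi$ whose phase gradient is both large and nearly constant on $\supp\tilde\psi_\theta$, and then to gain decay by repeated integration by parts. Unwinding the definition of $g_{T_{\theta,v}}$ one has
\beqq
	T^\lambda g_{T_{\theta,v}}(x) = \Big(\tfrac{r^{1/2}}{2\pi}\Big)^{n-1}\widehat{F_\theta}(v)\int_{\R^{n-1}} e^{2\pi i \Phi(x,\xi)} a^\lambda(x,\xi)\tilde\psi_\theta(\xi)\,d\xi,
\eeqq
with $F_\theta(\xi):=g(\xi)\psi_\theta(\xi)e^{2\pi i\phi^\lambda(x_0,\xi)}$ and phase $\Phi(x,\xi):=\phi_{x_0}^\lambda(x,\xi)+v\cdot\xi$. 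Since $\supp F_\theta$ has measure $\lesssim r^{-(n-1)/2}$, Cauchy--Schwarz gives $|\widehat{F_\theta}(v)|\lesssim r^{-(n-1)/4}\|g\|_{L^2}$, so the scalar prefactor is bounded by $r^{(n-1)/4}\|g\|_{L^2}$.

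Next I would show that the $\xi$-phase gradient is both large and essentially independent of $\xi$ on $\supp\tilde\psi_\theta$. By the defining equation of $\gamma_{\theta,v,x_0}^\lambda$, $\nabla_\xi\phi_{x_0}^\lambda(\gamma_{\theta,v,x_0}^\lambda(x_n),x_n,\xi_\theta)+v=0$, so the fundamental theorem of calculus together with the invertibility of $\partial^2_{x'\xi}\phi$ provided by H1 yields
\beqq
	|\nabla_\xi\phi_{x_0}^\lambda(x,\xi_\theta)+v|\sim |x'-\gamma_{\theta,v,x_0}^\lambda(x_n)|.
\eeqq
For $\xi\in\supp\tilde\psi_\theta$ one has $|\xi-\xi_\theta|\lesssim r^{-1/2}$; combined with the estimate $|\partial^2_{\xi\xi}\phi_{x_0}^\lambda(x,\cdot)|\lesssim |x-x_0|\leq r$ (which follows from $\partial^2_{\xi\xi}\phi^\lambda(x,\xi)=\lambda\partial^2_{\xi\xi}\phi(x/\lambda,\xi)$ and a mean value estimate in $x$), this gives
\beqq
	|\nabla_\xi\Phi(x,\xi)-\nabla_\xi\Phi(x,\xi_\theta)|\lesssim r^{1/2}.
\eeqq
Since $x\notin T_{\theta,v}$ forces $|x'-\gamma_{\theta,v,x_0}^\lambda(x_n)|\gtrsim r^{(1+\delta)/2}\gg r^{1/2}$, we conclude that $|\nabla_\xi\Phi(x,\xi)|\sim|\nabla_\xi\phi_{x_0}^\lambda(x,\xi_\theta)+v|$ uniformly for $\xi\in\supp\tilde\psi_\theta$.

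With this lower bound in hand, the non-stationary phase operator $L:=(2\pi i|\nabla_\xi\Phi|^2)^{-1}\overline{\nabla_\xi\Phi}\cdot\nabla_\xi$ satisfies $Le^{2\pi i\Phi}=e^{2\pi i\Phi}$. Integrating by parts $N_1+N_2$ times, and noting that each derivative of $\tilde\psi_\theta$ costs $r^{1/2}$ while derivatives of $\Phi$ of order $\geq 2$ and of $a^\lambda$ are uniformly bounded under the reduced-form hypotheses, one obtains
\beqq
	\Big|\int e^{2\pi i\Phi}a^\lambda\tilde\psi_\theta\,d\xi\Big|\lesssim_{N_1,N_2}\, r^{-(n-1)/2}\Bigl(\frac{r^{1/2}}{|\nabla_\xi\phi_{x_0}^\lambda(x,\xi_\theta)+v|}\Bigr)^{N_1+N_2}.
\eeqq
Combining with the prefactor bound and splitting the exponent, the $N_2$ factor contributes $r^{-N_2\delta/2}$ through the inequality $|\nabla_\xi\phi_{x_0}^\lambda(x,\xi_\theta)+v|\gtrsim r^{(1+\delta)/2}$, producing $\mathrm{RapDec}(r)$ once $N_2$ is taken large, while the $N_1$ factor delivers the advertised weight $(1+r^{-1/2}|\nabla_\xi\phi_{x_0}^\lambda(x,\xi_\theta)+v|)^{-N_1}$.

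The main technical hurdle is extracting the uniform lower bound $|\nabla_\xi\Phi|\gtrsim r^{(1+\delta)/2}$ on all of $\supp\tilde\psi_\theta$ from the geometric condition $x\notin T_{\theta,v}$. This is precisely where the inflated tube thickness $r^{(1+\delta)/2}$, rather than the natural $r^{1/2}$, is indispensable: the extra $r^{\delta/2}$ absorbs the $O(r^{1/2})$ oscillation of $\nabla_\xi\Phi$ in $\xi$ and provides the small gap needed to manufacture the $\mathrm{RapDec}(r)$ factor. Once this geometric input is secured, everything reduces to routine non-stationary phase.
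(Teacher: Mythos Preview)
Your approach is correct and is essentially the paper's: both reduce to non-stationary phase after identifying $\nabla_\xi\Phi(x,\xi_\theta)=\nabla_\xi\phi_{x_0}^\lambda(x,\xi_\theta)+v$ as the large parameter, the only cosmetic difference being that the paper first rescales $\xi\mapsto r^{-1/2}\xi+\xi_\theta$ before integrating by parts. That rescaling has the advantage that the higher $\xi$-derivatives of the phase become genuinely $O(1)$; your assertion that $\partial_\xi^\alpha\Phi$ is ``uniformly bounded'' for $|\alpha|\geq 2$ is not quite right in the unrescaled picture---the correct bound is $O(r)$, as you yourself recorded for $|\alpha|=2$---but this is harmless, since the resulting contributions $r/|\nabla_\xi\Phi|^2\leq r^{1/2}/|\nabla_\xi\Phi|$ are dominated by the amplitude-derivative terms you already tracked.
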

\begin{proof}
For convenience, we use $T$ to denote $T_{\theta,v}$ and use $g_{x_0}$ to denote $ge^{2\pi i \phi^\lambda(x_0,\xi)}$. Recall the definition of $g_{T}$, we have 
\beqq
T^\lambda g_{T}(x)=\Big(\frac{r^{\frac{1}{2}}}{2\pi}\Big)^{n-1}(g_{x_0}\psi_\theta)^{\widehat{}}(v)\int e^{2\pi i \phi^\lambda(x,\xi)-2\pi i \phi^\lambda(x_0,\xi)} e^{2\pi i v\cdot \xi} a^\lambda(x,\xi)\widetilde{\psi}_\theta(\xi)d\xi.
\eeqq
By changing of variables: $\xi\longrightarrow r^{-1/2}\xi+\xi_\theta$, it suffices to consider the integral
\beqq
\int e^{2\pi i \phi^\lambda(x,r^{-1/2}\xi+\xi_\theta)-2\pi i \phi^\lambda(x_0,r^{-1/2}\xi+\xi_\theta)} e^{2\pi i r^{-1/2} v\cdot \xi} a^\lambda(x,r^{-1/2}\xi+\xi_\theta)\widetilde{\psi}(\xi)d\xi.
\eeqq
Taking the derivative in $\xi$, we get	
\beqq
\begin{aligned}
&\quad \partial_{\xi} ( \phi^\lambda(x,r^{-1/2}\xi+\xi_\theta)- \phi^\lambda(x_0,r^{-1/2}\xi+\xi_\theta)+r^{-1/2}v\cdot \xi)\\
&=r^{-1/2}(\partial_\xi\phi^\lambda(x,r^{-1/2}\xi+\xi_\theta))-\partial_\xi\phi^\lambda(x,\xi_\theta)-(\partial_\xi\phi^\lambda(x_0,r^{-1/2}\xi+\xi_\theta)-\partial_\xi\phi^\lambda(x_0,\xi_\theta))\\
&\quad \quad \quad +r^{-1/2}(v+\partial_\xi\phi^\lambda_{x_0}(x,\xi_\theta))\\
&=r^{-1/2}(v+\partial_\xi \phi^\lambda_{x_0}(x,\xi_\theta))+O(1).
\end{aligned}
\eeqq
Integration by parts, 	we will obtain the desired results.

\end{proof}
We also have the following $L^2$-orthogonality properties.
\begin{lemma}[$L^2$-orthogonality]\label{orth}
	For any $\mathbb{T}\subset \mathbb{T}[B_r^n(x_0)]$, it holds that 
	\beq
	\|\sum_{T\in \mathbb{T}}g_T\|_2^2\lesssim \sum_{T\in \mathbb{T}}\|g_T\|_2^2\lesssim \|g\|_2^2.
	\eeq 
	Moreover, if $\mathbb{T}$ is any collection of tubes with the same $\theta$, then 
	\beqq
	\|\sum_{T\in \mathbb{T}}g_T\|_2^2\sim \sum_{T\in \mathbb{T}}\|g_T\|_2^2.
	\eeqq
	\end{lemma}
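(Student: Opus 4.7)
The plan is to reduce everything to an orthogonality statement on the Fourier side, exploiting the fact that the modulation factor $e^{-2\pi i\phi^\lambda(x_0,\xi)}$ is unimodular. Setting $g_{x_0}(\xi):=g(\xi)e^{2\pi i\phi^\lambda(x_0,\xi)}$, we have $|g_{T_{\theta,v}}|=|(g_{x_0}\psi_\theta)_{v}|$ pointwise, so the entire lemma is equivalent to the analogous statement for the purely Fourier-side pieces $(g_{x_0}\psi_\theta)_{v}$. Since $\|g\|_{L^2}=\|g_{x_0}\|_{L^2}$, the modulation can be ignored from here on.

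Next I would handle a single cap $\theta$. The definitions have been calibrated so that $\mathrm{supp}\,\tilde\psi_\theta\subset\tfrac{11}{9}\theta$ is a cube of sidelength exactly $r^{-1/2}$, that is, exactly one period of the dual lattice $r^{1/2}\mathbb{Z}^{n-1}$. Hence $\{e^{2\pi iv\cdot\xi}\}_{v\in r^{1/2}\mathbb{Z}^{n-1}}$ is orthogonal on this cube, and the Fourier series identity displayed in the definition of the wave packets literally says $\sum_v g_{T_{\theta,v}}=e^{-2\pi i\phi^\lambda(x_0,\cdot)}g_{x_0}\psi_\theta$, whose $L^2$-norm equals $\|g\psi_\theta\|_{L^2}$. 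Applying Plancherel on the cube then yields, for the full sum,
\[
\Big\|\sum_{v}g_{T_{\theta,v}}\Big\|_{L^2}^2\sim\|g\psi_\theta\|_{L^2}^2\sim \sum_{v}\|g_{T_{\theta,v}}\|_{L^2}^2,
\]
which is exactly the ``moreover'' assertion. For a subset $\mathbb{T}$ restricted to this $\theta$, only the $\lesssim$ direction survives (Bessel).

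To sum over $\theta$ I would use that the dilated cubes $\{\tfrac{11}{9}\theta\}_{\theta\in\Theta_r}$ still have uniformly bounded overlap, so $\#\{\theta:\xi\in\mathrm{supp}\,\tilde\psi_\theta\}\lesssim 1$ and Cauchy--Schwarz pointwise gives
\[
\Big\|\sum_{\theta}h_\theta\Big\|_{L^2}^2\lesssim\sum_{\theta}\|h_\theta\|_{L^2}^2
\]
for any family $\{h_\theta\}$ with $\mathrm{supp}\,h_\theta\subset\tfrac{11}{9}\theta$. Applying this with $h_\theta=\sum_{v:T_{\theta,v}\in\mathbb{T}}g_{T_{\theta,v}}$, followed by the single-cap Bessel inequality in $v$, produces the first claimed inequality $\|\sum_{T\in\mathbb{T}}g_T\|_{L^2}^2\lesssim\sum_{T\in\mathbb{T}}\|g_T\|_{L^2}^2$. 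For the second inequality $\sum_{T\in\mathbb{T}}\|g_T\|_{L^2}^2\lesssim\|g\|_{L^2}^2$ I enlarge $\mathbb{T}$ to all of $\mathbb{T}[B_r^n(x_0)]$, use the single-cap equivalence to reduce to $\sum_\theta\|g\psi_\theta\|_{L^2}^2$, and finish with $\sum_\theta|\psi_\theta|^2\lesssim1$, a bounded-overlap property of the partition of unity.

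There is really no main obstacle: the only delicate point is checking that the numerical constants $\tfrac{9}{11}$ and $\tfrac{11}{9}$ in the definition of $\Theta_r$ and $\tilde\psi_\theta$ are chosen precisely so that $\mathrm{supp}\,\tilde\psi_\theta$ equals one full period of the dual lattice, which is what forces the Fourier-series orthogonality to be exact rather than merely almost-orthogonal. Once that is observed, the whole argument is just Plancherel plus bounded overlap, and restricting to a subset $\mathbb{T}$ is harmless since Bessel's inequality is monotone under dropping terms.
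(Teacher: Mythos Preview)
The paper does not supply a proof of this lemma; it is quoted as a standard fact from the polynomial-partitioning literature (Guth, Guth--Hickman--Iliopoulou, Gan--Oh--Wu). Your argument is exactly the standard one: strip off the unimodular modulation $e^{-2\pi i\phi^\lambda(x_0,\cdot)}$, observe that $\mathrm{supp}\,\tilde\psi_\theta=\tfrac{11}{9}\theta$ is precisely one period of the lattice $r^{1/2}\mathbb Z^{n-1}$ so that Parseval on that cube controls the $v$-sum for each fixed $\theta$, and then pass to the full collection using the bounded overlap of the dilated caps $\{\tfrac{11}{9}\theta\}$. This correctly establishes both inequalities in the first display.

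One point deserves clarification. You identify the full-sum equivalence $\|\sum_v g_{T_{\theta,v}}\|_2^2\sim\sum_v\|g_{T_{\theta,v}}\|_2^2$ with the ``moreover'' clause, and then remark that for a proper subset only the Bessel direction $\lesssim$ survives. As written, however, the ``moreover'' asks for the two-sided bound for \emph{any} collection with common $\theta$, including proper subsets. Your caution here is in fact mathematically warranted: since $|\tilde\psi_\theta|^2$ is a smooth weight vanishing on the boundary of the period cube, the family $\{e^{2\pi i v\cdot\xi}\tilde\psi_\theta\}_v$ is only a Bessel sequence, not a Riesz sequence (its Gram matrix is Toeplitz with symbol the periodisation of $|\tilde\psi_\theta|^2$, whose essential infimum is $0$), and one can arrange subsets $S$ for which $\|\sum_{v\in S}g_{T_{\theta,v}}\|_2^2\ll\sum_{v\in S}\|g_{T_{\theta,v}}\|_2^2$. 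So the $\sim$ for arbitrary subsets is not what your argument yields, and appears to be a mild overstatement in the lemma. Since the paper only ever invokes the first display (in the corollary following the transverse-equidistribution lemma), this has no effect downstream.
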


{\bf \noindent Comparing wave-packet at different scales.}
Let $r^{1/2}<\rho <r$. Consider another smaller ball $B_\rho^n(\tilde x_0)\subset B_r^n(x_0)$. Similarly, we may define the wave-packet decomposition with respect to the ball $B_{\rho}^n(\tilde x_0)$. To distinguish the wave packet of different scales, we use $\widetilde{\mathbb{T}}[B_\rho^n(\tilde{x}_0)]$ to denote the  smaller scale wave-packet.
\begin{definition}
	We say a function $h$ is concentrated on wave packets from a tube set $\mathbb{T}_\alpha$, if 
	\beq\label{eq:deh}
	h=\sum_{T\in \mathbb{T}_\alpha}h_{T}+{\rm RapDec}(r)\|h\|_2.
	\eeq
	\end{definition}
\begin{definition}
	Let $(\theta,v)\in \Theta_r \times r^{1/2}\mathbb{Z}^{n-1}$ and let $(\tilde \theta,\tilde v)\in \Theta_\rho \times \rho^{1/2}\mathbb{Z}^{n-1}$. We define a  set $\widetilde{\mathbb{T}}_{\theta, v}[B_\rho^n(\tilde x_0)]$ collection of smaller tubes as follows 
	\beqq
	\widetilde{\mathbb{T}}_{\theta,v}[B_\rho^{n}(\tilde x_0)]:=\{\tilde T_{\tilde \theta,\tilde v}\in \widetilde{\mathbb{T}}[B_\rho^n(\tilde x_0)]:{\rm dist}(\theta,\tilde \theta)\lesssim \rho^{-1/2}, |\tilde v-(\partial_\xi\phi^\lambda_{x_0}(\tilde x_0,\xi_\theta)+v)|\lesssim r^{(1+\delta)/2}\}.
	\eeqq
	\end{definition}
One may carry over the approach verbatim in \cite{GOW}  to obtain the following two lemmas.
\begin{lemma}[\cite{GOW}]
	Let $T_{\theta,v}\in \mathbb{T}[B_r^n(x_0)]$. Then it holds that 
	\beqq
	g_{T_{\theta,v}}=(g_{T_{\theta,v}})|_{\widetilde{\mathbb{T}}_{\theta,v}[B_\rho^n(\tilde{x}_0)]}+{\rm RapDec}(r)\|g\|_2.
	\eeqq
	\end{lemma}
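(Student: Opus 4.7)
The plan is to expand $g_{T_{\theta,v}}$ using the scale-$\rho$ wave packet decomposition centered at $\tilde x_0$ and verify that only the smaller tubes $\tilde T_{\tilde\theta,\tilde v}$ with $\dist(\theta,\tilde\theta)\lesssim \rho^{-1/2}$ and $|\tilde v-(\partial_\xi\phi^\lambda_{x_0}(\tilde x_0,\xi_\theta)+v)|\lesssim r^{(1+\delta)/2}$ contribute non-trivially. Since the scale-$\rho$ decomposition reproduces any function up to a ${\rm RapDec}(r)$ error by the construction in Section \ref{wav}, it is enough to show that for any pair $(\tilde\theta,\tilde v)$ outside this set the coefficient of $(g_{T_{\theta,v}})_{\tilde T_{\tilde\theta,\tilde v}}$ decays faster than any power of $r$ times $\|g\|_2$, with enough margin to survive summation over the (infinite) $\tilde v$-lattice.

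The first, Fourier-side, constraint is immediate: $g_{T_{\theta,v}}$ is supported in $\tfrac{11}{9}\theta$ through the cutoff $\tilde\psi_\theta$, while the cutoff $\psi_{\tilde\theta}$ entering the scale-$\rho$ construction is supported in $\tilde\theta$. Since $\rho<r$ and hence $\rho^{-1/2}\geq r^{-1/2}$, a non-empty overlap forces $\dist(\theta,\tilde\theta)\lesssim \rho^{-1/2}$.

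For the spatial constraint I would track the scalar coefficient through the two successive decompositions. A direct substitution shows that it is a harmless prefactor times
\beqq
c_v\int e^{2\pi i\Psi(\xi)}\tilde\psi_\theta(\xi)\psi_{\tilde\theta}(\xi)\,d\xi,\qquad \Psi(\xi):=(v-\tilde v)\cdot\xi+\phi^\lambda_{x_0}(\tilde x_0,\xi),
\eeqq
where $c_v=(g\, e^{2\pi i\phi^\lambda(x_0,\cdot)}\psi_\theta)^{\widehat{}}(v)$ satisfies $|c_v|\lesssim \|g\|_2$ by Plancherel. The uniform reduced-form bound $|\partial_x\partial_\xi^2\phi^\lambda|=O(1)$ yields $|\partial_\xi^2\phi^\lambda_{x_0}(\tilde x_0,\cdot)|\lesssim |\tilde x_0-x_0|\lesssim r$, so Taylor expansion around $\xi_\theta$ gives
\beqq
\partial_\xi\Psi(\xi)=(v-\tilde v)+\partial_\xi\phi^\lambda_{x_0}(\tilde x_0,\xi_\theta)+O(r^{1/2})\qquad \text{on } \tfrac{11}{9}\theta.
\eeqq
Once $|\tilde v-(v+\partial_\xi\phi^\lambda_{x_0}(\tilde x_0,\xi_\theta))|\geq r^{(1+\delta)/2}$, the leading term dominates the error and repeated integration by parts $N$ times (each step costing at most $r^{1/2}$ from the amplitude derivative and gaining $|\partial_\xi\Psi|^{-1}\lesssim r^{-(1+\delta)/2}$) produces a factor $r^{-N\delta/2}$ multiplied by a suitable negative power of $|\tilde v-(v+\partial_\xi\phi^\lambda_{x_0}(\tilde x_0,\xi_\theta))|$, which together give ${\rm RapDec}(r)\|g\|_2$ even after summation over $\tilde v$.

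The main obstacle is balancing the Taylor error against the gain from non-stationarity: because $|\tilde x_0-x_0|$ can be as large as $r$, the derivative $\partial_\xi^2\phi^\lambda_{x_0}$ is $O(r)$ and the first-order Taylor remainder is $O(r^{1/2})$, which would swallow a plain $r^{1/2}$ threshold; the extra $r^{\delta/2}$ sitting in the cutoff $r^{(1+\delta)/2}$ built into the definition of $\mathbb{T}[B_r^n(x_0)]$ is precisely what preserves the margin. Having secured this, the integration-by-parts argument converts each step into a genuine gain $r^{-\delta/2}$ and the conclusion follows by choosing $N$ large enough to beat the polynomial count of excluded pairs.
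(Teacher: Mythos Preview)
Your argument is correct and is the standard one: the paper does not give its own proof but simply states that the approach in \cite{GOW} carries over verbatim, and what you have written is precisely that approach (frequency support forces $\dist(\theta,\tilde\theta)\lesssim\rho^{-1/2}$, then non-stationary phase on the coefficient integral with phase $\Psi(\xi)=(v-\tilde v)\cdot\xi+\phi^\lambda_{x_0}(\tilde x_0,\xi)$ handles the $\tilde v$-localisation, the $r^{\delta/2}$ margin absorbing the $O(r^{1/2})$ Taylor error coming from $|\partial_\xi^2\phi^\lambda_{x_0}(\tilde x_0,\cdot)|\lesssim r$). One small clarification: the scale-$\rho$ decomposition is exact rather than ``up to ${\rm RapDec}(r)$'', so the error term comes entirely from summing the discarded coefficients, which your integration-by-parts bound $(1+r^{-1/2}|\tilde v-(v+\partial_\xi\phi^\lambda_{x_0}(\tilde x_0,\xi_\theta))|)^{-N}$ indeed makes summable over the $\rho^{1/2}\mathbb{Z}^{n-1}$ lattice with the required rapid decay.
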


\begin{lemma}[\cite{GOW}]
	Assume $T_{\theta,v}\subset \mathbb{T}[B_r^n(x_0)]$. If $\tilde T_{\tilde \theta,\tilde v}\in \widetilde{\mathbb{T}}_{\theta,v}[B_\rho^n(\tilde{x}_0)]$, then it holds that 
	\beqq
	{\rm HausDist}(\tilde T_{\tilde \theta,\tilde v},T_{\theta,v}\cap B_\rho^n(\tilde x_0))\lesssim r^{(1+\delta)/2},
	\eeqq
	and 
	\beqq
	\measuredangle(G(\xi_\theta),G(\xi_{\tilde{\theta}}))\lesssim  \rho^{-1/2}.
	\eeqq
	\end{lemma}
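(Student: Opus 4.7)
I would reduce both assertions to comparisons at the level of the central curves $\gamma_{\theta,v,x_0}^\lambda$ characterized implicitly by $\partial_\xi\phi^\lambda_{x_0}(\gamma(x_n),x_n,\xi_\theta)+v=0$. The angle bound is immediate: in the reduced form the straight condition H4 makes $G(x,\xi)=G(\xi)$ a smooth function of $\xi$ on a compact set, and the hypothesis $\dist(\theta,\tilde\theta)\lesssim\rho^{-1/2}$ together with the mean value theorem gives $|G(\xi_\theta)-G(\xi_{\tilde\theta})|\lesssim \rho^{-1/2}$.

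For the Hausdorff distance, I would compare $\gamma(x_n):=\gamma_{\theta,v,x_0}^\lambda(x_n)$ and $\tilde\gamma(x_n):=\gamma_{\tilde\theta,\tilde v,\tilde x_0}^\lambda(x_n)$ directly by subtracting their defining equations. The challenge is that the equation for $\tilde\gamma$ involves $\xi_{\tilde\theta}$ and $\tilde x_0$, whereas the equation for $\gamma$ involves $\xi_\theta$ and $x_0$. First I would swap $\xi_{\tilde\theta}$ for $\xi_\theta$: setting $F(x):=\partial_\xi\phi^\lambda(x,\xi_{\tilde\theta})-\partial_\xi\phi^\lambda(x,\xi_\theta)$, the crucial point is that the mixed derivative $\partial_x\partial_\xi^2\phi^\lambda(x,\xi)=\partial_x\partial_\xi^2\phi(x/\lambda,\xi)$ is $O(1)$ (not $O(\lambda)$), so $|\partial_x F|\lesssim|\xi_\theta-\xi_{\tilde\theta}|\lesssim \rho^{-1/2}$, and the oscillation of $F$ over $B^n_\rho(\tilde x_0)$ is at most $\rho\cdot\rho^{-1/2}=\rho^{1/2}\le r^{(1+\delta)/2}$. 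Inserting this into the equation for $\tilde\gamma$ and then using the hypothesis $|\tilde v-(\partial_\xi\phi^\lambda_{x_0}(\tilde x_0,\xi_\theta)+v)|\lesssim r^{(1+\delta)/2}$, all the reference-point pieces involving $\partial_\xi\phi^\lambda(\tilde x_0,\xi_\theta)$ cancel and I obtain
\[
\partial_\xi\phi^\lambda(\tilde\gamma(x_n),x_n,\xi_\theta)=\partial_\xi\phi^\lambda(x_0,\xi_\theta)-v+O(r^{(1+\delta)/2}).
\]

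Comparing this with the exact identity $\partial_\xi\phi^\lambda(\gamma(x_n),x_n,\xi_\theta)=\partial_\xi\phi^\lambda(x_0,\xi_\theta)-v$ and inverting the map $x'\mapsto \partial_\xi\phi^\lambda(x',x_n,\xi_\theta)$ via condition H1 (in the reduced form $\partial_{x'}\partial_\xi\phi^\lambda=I+O(c_{\rm par})$ is uniformly invertible), I deduce $|\gamma(x_n)-\tilde\gamma(x_n)|\lesssim r^{(1+\delta)/2}$ for $x_n\in [\tilde x_0^n-C\rho,\tilde x_0^n+C\rho]$. Since $T_{\theta,v}$ has radial thickness $r^{(1+\delta)/2}$ and $\tilde T_{\tilde\theta,\tilde v}$ has radial thickness $\rho^{(1+\delta)/2}\le r^{(1+\delta)/2}$, this control of the centers yields the claimed Hausdorff bound by the triangle inequality.

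The main obstacle is bookkeeping of the parabolic $\lambda$-scaling. Derivatives $\partial_\xi^k\phi^\lambda$ with $k\ge 2$ are of size $\lambda$, and only the fact that any additional $\partial_x$ removes this $\lambda$ factor saves the estimate when one varies $\xi_\theta$ over a region of size $\rho^{-1/2}$ at a point $x$ ranging over $B^n_\rho(\tilde x_0)$. The asymmetric form of the hypothesis in the definition of $\widetilde{\mathbb{T}}_{\theta,v}[B^n_\rho(\tilde x_0)]$, in which the condition on $\tilde v$ is phrased at the \emph{common} frequency center $\xi_\theta$, is precisely what allows the two reference-point contributions to cancel so that only $O(r^{(1+\delta)/2})$ errors remain.
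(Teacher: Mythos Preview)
Your proposal is correct and follows the standard argument that the paper defers to \cite{GOW} for (the paper gives no independent proof, only stating that the approach in \cite{GOW} carries over verbatim). The key observations you isolate---that $\partial_x\partial_\xi^2\phi^\lambda=O(1)$ so the $\xi_\theta\to\xi_{\tilde\theta}$ swap costs only $\rho\cdot\rho^{-1/2}$ over $B_\rho^n(\tilde x_0)$, and that the specific form of the constraint on $\tilde v$ makes the reference-point terms cancel---are exactly the mechanism behind the lemma; one minor remark is that the angle bound needs only smoothness of $G$ in $\xi$, not the straight condition itself.
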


\section{Transverse equidistribution property}\label{tran}
Transverse equidistribution property is based on a simple observation  which can be roughly stated as follows: if ${\rm supp}\hat{g}\subset B_r^n(0), r>0$, then $g$ can not be concentrated in a ball of  radius less than $r^{-1}$. Starting from this fact and other geometric assumptions, Guth \cite{Guth} established the transverse equidistribution lemma for the extension operator. Then Guth-Hickman-Iliopoulou \cite{GHI} extended  it to the H\"ormander type operator with the convex condition. It should be noted that the proof of the transverse equidistribution lemma in \cite{GHI} relies on the phase belongs to a category  which may not satisfy the straight condition. To overcome this obstacle,  we follow the approach  in \cite{GOW} which deals with the input function directly without recourse to  a further operation under $T^\lambda$. Since $T^\lambda$ satisfies the straight condition, i.e., for given $\xi$,  $G^\lambda(x,\xi)$ keeps invariant when $x$ changes. Thus, we may use $G(\xi)$ to denote $G^\lambda(x,\xi)$. It is worth noting that there are  still some differences between \cite{GOW} and our case at this point, for example, in \cite{GOW}, 
$$G(\xi)=\frac{(-\xi,1)}{\sqrt{1+|\xi|^2}},$$
therefore, if $V\subset \R^n$ is a subspace, then the set 
$$\mathcal{S}:=\{\xi\in \R^n: G(\xi)\in V\}$$
falls into  an affine subspace in $\R^n$. However, in our case, the associated set  $\mathcal{S}$ may be  a curved submanifold which requires  more  technical handling.

Since $\phi(x,\xi)$ satisfies the straight condition, in terms of the Gauss map,  it suffices to consider the following class of varying hypersurfaces  $\{\partial_x\phi(x,\xi):\xi \in \Omega\}$ at $x=0$. By the  condition ${\rm H}_1$, we may find locally  a function $q:\R^{n-1}\longrightarrow \R^{n-1}$ such that 
\beqq
\partial_{x'}\phi(0,q(\xi))=\xi.
\eeqq
Define 
\beq\label{eq:hxi}
h(\xi):=\partial_{x_n}\phi(0,q(\xi)).
\eeq
We may assume the hypersurface $\{\partial_x\phi(0,\xi):\xi \in \Omega\}$ can be reparameterized by $\{(\xi,h(\xi): \xi \in \Omega)\}$ with 
\beq\label{eq:hass}
\|\partial_{\xi\xi}^2 h(\xi)-I_{n-1}\|_{\rm op}\ll 1, \quad \xi \in \Omega.
\eeq
Otherwise,  we can  choose a non-degenerate matrix $A$ such that 
\beqq
\partial_{\xi\xi}^2 h(A\xi )|_{\xi=0}=I_{n-1}.
\eeqq
By choosing the support of $\xi$ sufficiently small, it holds that 
\beqq
\big\|\partial_{\xi\xi}^2h(A \xi)-I_{n-1}\big\|_{\rm op}\ll1.
\eeqq
Correspondingly, we make  another affine transformation  in $x$ and  replace $\phi(x,\xi)$ by $\tilde{\phi}(x,\xi)$ which is defined by 
\beqq
\tilde \phi(x,\xi):=\phi(A^{-1}x',x_n,q(A\xi)).
\eeqq
Obviously, $\tilde\phi(x,\xi)$ satisfies the straight conditions and 
\beqq
\partial_{x}\tilde\phi(x,\xi)|_{x=0}=(\xi, \partial_{x_n}\phi(0,q(A\xi))).
\eeqq
\begin{definition}
	Let $P_1,\cdots,P_{n-m}:\mathbb{R}^n\rightarrow \mathbb{R}$ be polynomials. We consider the common zero set 
	\beq\label{al1}
	Z(P_1,\cdots,P_{n-m}):=\{x\in \R^n: P_1(x)=\cdots=P_{n-m}(x)=0\}.
	\eeq
	Suppose that for all $z\in Z(P_1,\cdots,P_{n-m})$, one has 
	\beqq
	\bigwedge_{j=1}^{n-m}\nabla P_j(x)\neq 0.
	\eeqq
	Then a connected branch of this set, or a union of connected branches of this set, is called an $m$-dimensional transverse complete intersection. Given a set $Z$ of the form \eqref{al1}, the degree of $Z$ is defined by  
	\beqq
	\min\big(\prod_{j=1}^{n-m}{\rm deg}(P_i)\big),
	\eeqq
	where the minimum is taken over all possible representations of $Z=Z(P_1,\cdots,P_{n-m})$.
\end{definition}
\begin{definition}
	Let $r\geq 1$ and $Z$ be an $m$-dimensional transverse complete intersection. A tube $T_{\theta,v}(x_0)\in \mathbb{T}[B_{r}^n(x_0)]$ is said to be $r^{-1/2+\delta_m}$-tangent to $Z$ in $B_r^n(x_0)$ if it satisfies 
\begin{itemize}
	\item $T_{\theta,v}(x_0)\subset N_{r^{1/2+\delta_m}}(Z)\cap B_r^n(x_0);$
	\item For every $z\in Z\cap B_r^n(x_0)$, if there is $y\in T_{\theta,v}(x_0)$ with $|z-y|\lesssim r^{1/2+\delta_m}$, then one has 
	\beqq
	\measuredangle(G(\theta),T_zZ)\lesssim r^{-1/2+\delta_m}.
	\eeqq
	Here, $T_zZ$ is the tangent space of $Z$ at $z$ and 
	\beqq
	G(\theta):=\{G(\xi):\xi \in \theta\}.
	\eeqq
	\end{itemize}
\end{definition}
\begin{definition}
	Let $1\leq \rho \leq r$ and $Z$ be an $m$-dimensional transverse complete intersection and let $B_{\rho}^n(\tilde x_0)\subset B_r^n(x_0)$. Define a collection of tangent tubes inside  a ball  as 
	\beqq
	\mathbb{T}_Z[B_r^n(x_0)]:=\{T\in \mathbb{T}[B_r^n(x_0)]: T \;\text{is}\; r^{-1/2+\delta_m}\text{-tangent to}\; Z \; \text{in}\; B_r^n(x_0)\}.
	\eeqq
	Given an arbitrary translation $b\in \R^n$, define 
	\beqq
	\widetilde{\mathbb{T}}_b[B_{\rho}^n(\tilde x_0)]:=\{\tilde T\in \widetilde{\mathbb{T}}[B_\rho^n(\tilde x_0)]: \tilde T \;\text{is}\; \rho^{-1/2+\delta_m}\; \text{-tangent to}\; Z+b \; \text{in}\; B_\rho^n(\tilde x_0)\}.
	\eeqq
For simplicity, we also abbreviate  $T_Z$ and $\widetilde {\mathbb{T}}_b$  for $\mathbb{T}_Z[B_r^n(x_0)]$ and $\widetilde{\mathbb{T}}_b[B_\rho^n(\tilde x_0)]$ respectively. 
\end{definition}
We may state our main results in this section as follows.
\begin{lemma}\label{trans}
	Let $|b|\lesssim r^{1/2+\delta_m}$. Suppose that $h$ is concentrated on large wave packets from $\mathbb{T}_Z\cap \mathbb{T}_{\tilde \theta,w}$ for some $(\tilde \theta,w)\in \Theta_\rho \times r^{1/2}\mathbb{Z}^{n-1}$. Then for every $\widetilde{W}\subset \widetilde{\mathbb{T}}_b$, we have 
	\beq\label{eq:1110}
	\big\|h|_{\widetilde{W}}\big\|_2^2\lesssim
	r^{O(\delta_m)}(r/\rho)^{-\frac{n-m}{2}}\|h\|_2^2.
	\eeq
	\end{lemma}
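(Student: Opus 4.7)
The plan is to adapt the approach of Gan--Oh--Wu in \cite{GOW} to our curved setting, treating the input function $h$ directly. The crucial hypothesis is the straight condition, which guarantees that the Gauss map $G(\xi)$ is independent of $x$; hence the tangency of wave packets to $Z$ imposes a constraint on the frequency variable $\xi$ alone.

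First, I would use the wave packet hypothesis together with the $L^2$-orthogonality of Lemma \ref{orth} to localize $h$ on the Fourier side. After conjugating by the modulation $e^{2\pi i\phi^\lambda(x_0,\cdot)}$ used in the wave packet decomposition \eqref{wave-1}, the function becomes essentially supported in frequency on the set of $\xi\in\tilde\theta$ with $G(\xi)\in T_zZ$ up to angular error $r^{-1/2+\delta_m}$ for some $z\in Z\cap B_r^n(x_0)$; the localization $\mathbb{T}_{\tilde\theta,w}$ further restricts to a $\rho^{-1/2}$-ball in $\xi$-space near $\xi_{\tilde\theta}$. By condition $\mathbf{C}_1$, the map $\xi\mapsto G(\xi)$ has nondegenerate differential, so this frequency set lies in an $r^{-1/2+\delta_m}$-neighborhood of a smooth $(m-1)$-dimensional submanifold of $\tilde\theta$. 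On the scale $\rho^{-1/2}$ of $\tilde\theta$, this submanifold is well approximated by its tangent plane, reducing the problem to an effectively affine situation of the type treated in \cite{GOW}.

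Next, I would invoke a local-constancy (uncertainty) principle: since the Fourier support has thickness $r^{-1/2+\delta_m}$ in the $n-m$ directions normal to the linearized submanifold (i.e.\ directions transverse to $Z$), the modulated $h$ is essentially constant on $r^{1/2-\delta_m}$-slabs in these transverse directions. Covering $B_\rho^n(\tilde x_0)$ by such slabs, $|h|^2$ is equidistributed cell-by-cell up to $r^{O(\delta_m)}$ losses. The tubes in $\widetilde W\subset\widetilde{\mathbb{T}}_b$, being $\rho^{-1/2+\delta_m}$-tangent to $Z+b$ with $|b|\lesssim r^{1/2+\delta_m}$ absorbed into the angular fuzziness, are confined to a $\rho^{1/2+\delta_m}$-neighborhood of a translate of $Z$. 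A volume comparison in the transverse direction shows that this region occupies only a $(\rho^{1/2+\delta_m}/r^{1/2-\delta_m})^{n-m}=(r/\rho)^{-(n-m)/2}r^{O(\delta_m)}$ fraction of the equidistribution cells, which yields \eqref{eq:1110}.

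The main obstacle is the linearization step. In \cite{GOW}, where $G(\xi)=(-\xi,1)/\sqrt{1+|\xi|^2}$, the constraint $G(\xi)\in T_zZ$ cuts out an affine set, so no linearization is needed. Here $\xi\mapsto G(\xi)$ is genuinely curved, and the approximation error between the true submanifold and its tangent plane must be quantitatively controlled using the uniform bounds \eqref{eq:uni} and condition $\mathbf{C}_1$; this is precisely what forces the small parameter $\delta_m$ and the $r^{O(\delta_m)}$ loss in the conclusion. Once this curved submanifold has been flattened with acceptable error, the remaining Fourier-analytic and combinatorial steps run in parallel with those of \cite{GOW} and go through with only notational modifications.
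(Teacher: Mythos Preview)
Your proposal is correct and follows essentially the same route as the paper: Lemma~\ref{trans} is proved by combining Proposition~\ref{eq:local} (the small-scale tangency confines $\widehat{h|_{\widetilde W}}$ to $N_{C\rho^{1/2+\delta_m}}(\Phi(Z_0)\cap\Phi(CB))$ via stationary phase) with Proposition~\ref{pro:2} (the large-scale tangency forces the $\xi$-support of $h$ into an $r^{-1/2+\delta_m}$-neighborhood of the curved set $L=\{\xi:G(\xi)\in V\}$, which is linearized by its tangent plane $V'=T_{\tilde\xi}L$ at a point $\tilde\xi$ within $\rho^{-1/2}$ of $\tilde\theta$, after which Lemma~\ref{local} and the covering count of Proposition~\ref{proc} give the gain). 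Your identification of the linearization of $L$ as the sole new obstacle relative to \cite{GOW} is exactly on target; one small clarification is that the equidistribution takes place on the $\hat h$ side (the $v$-variable in $\mathbb{R}^{n-1}$), not in the physical ball $B_\rho^n(\tilde x_0)\subset\mathbb{R}^n$.
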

	As a direct consequence of Lemma \ref{trans},   we may obtain the following results.
	\begin{corollary}
	Let $|b|\lesssim r^{1/2+\delta_m}$. Suppose that $h$ is concentrated on large wave packet from $\mathbb{T}_Z$. Then for every $\widetilde{W}\subset \widetilde{\mathbb{T}}_b$, we have 
	\beqq
	\big\|h|_{\widetilde{W}}\big\|_2^2\lesssim
	r^{O(\delta_m)}(r/\rho)^{-\frac{n-m}{2}}\|h\|_2^2.
	\eeqq
\end{corollary}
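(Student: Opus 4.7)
The plan is to reduce the corollary to Lemma \ref{trans} by partitioning the scale-$r$ wave packet index set and reassembling the resulting estimates through $L^2$ almost-orthogonality. Concretely, I would partition $\mathbb{T}[B_r^n(x_0)]$ into the subcollections $\mathbb{T}_{\tilde\theta,w}$ indexed by $(\tilde\theta,w)\in\Theta_\rho\times r^{1/2}\Z^{n-1}$ and set $h_{(\tilde\theta,w)}:=\sum_{T\in \mathbb{T}_Z\cap \mathbb{T}_{\tilde\theta,w}}h_T$. Since $h$ is concentrated on wave packets from $\mathbb{T}_Z$ in the sense of \eqref{eq:deh}, one has $h=\sum_{(\tilde\theta,w)}h_{(\tilde\theta,w)}+\mathrm{RapDec}(r)\|h\|_2$, and each piece $h_{(\tilde\theta,w)}$ now satisfies the single-$(\tilde\theta,w)$ concentration hypothesis required by Lemma \ref{trans}.

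Applying that lemma piece by piece yields
\beqq
\|h_{(\tilde\theta,w)}|_{\widetilde W}\|_2^2\lesssim r^{O(\delta_m)}(r/\rho)^{-(n-m)/2}\|h_{(\tilde\theta,w)}\|_2^2
\eeqq
for every $(\tilde\theta,w)$. To recombine these bounds I would invoke orthogonality at two levels. For distinct $\tilde\theta$, the pieces $h_{(\tilde\theta,w)}$ have Fourier supports contained in an $O(\rho^{-1/2})$-neighborhood of $\tilde\theta$, so their restrictions $h_{(\tilde\theta,w)}|_{\widetilde W}$ to the scale-$\rho$ tube collection $\widetilde W$ are almost orthogonal. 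For fixed $\tilde\theta$ and varying $w$, the scale-$r$ wave packets involved share the same coarser angular parameter $\tilde\theta$, so the sharp version of Lemma \ref{orth} (the $\sim$ relation for a common angular block) yields almost orthogonality in $w$. Combining both, and using Lemma \ref{orth} one final time for the scale-$r$ decomposition of $h$, I obtain
\beqq
\|h|_{\widetilde W}\|_2^2\lesssim \sum_{(\tilde\theta,w)}\|h_{(\tilde\theta,w)}|_{\widetilde W}\|_2^2 \lesssim r^{O(\delta_m)}(r/\rho)^{-(n-m)/2}\sum_{(\tilde\theta,w)}\|h_{(\tilde\theta,w)}\|_2^2 \lesssim r^{O(\delta_m)}(r/\rho)^{-(n-m)/2}\|h\|_2^2,
\eeqq
which is the claimed bound.

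The main obstacle is ensuring the coherence between the two scales of wave packet decompositions when computing the restriction $h_{(\tilde\theta,w)}|_{\widetilde W}$: I need that a scale-$r$ wave packet with angular parameter $\theta\subset\tilde\theta$ contributes, up to a rapidly decaying error, only to those scale-$\rho$ wave packets whose angular parameter lies within $O(\rho^{-1/2})$ of $\tilde\theta$, and is spatially anchored near the parent tube. This is precisely the content of the two comparison lemmas at the end of Section \ref{wav}, which describe how the scale-$\rho$ decomposition inherits the angular and spatial localization of the scale-$r$ one. With this compatibility in hand, the almost orthogonality across distinct $\tilde\theta$ reduces to the standard disjoint-frequency-support argument, and the proof is completed by summing as above.
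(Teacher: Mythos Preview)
Your proposal is correct and follows essentially the same approach as the paper: decompose $h$ into pieces $h_{(\tilde\theta,w)}$ concentrated on $\mathbb{T}_Z\cap\mathbb{T}_{\tilde\theta,w}$, apply Lemma~\ref{trans} to each piece, and reassemble via the $L^2$-orthogonality of Lemma~\ref{orth}. The paper's own proof is terser---it simply cites Lemma~\ref{orth} without separating the angular and spatial orthogonality as you do---but the underlying argument is identical.
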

\begin{proof}
	We may rewrite 
	$$h|_{\widetilde{W}}=\sum_{(\tilde \theta ,w)} h_{\tilde \theta,w},$$
such  that $h_{\tilde \theta,w}$ is concentrated on wave packets from $\mathbb{T}_Z\cap \mathbb{T}_{\tilde \theta,w}$. 	 Then, we may apply \eqref{eq:1110} to each $h_{\tilde \theta,w}$  and recall the $L^2$-orthogonality in Lemma \ref{orth}.
\end{proof}
Let $\tilde{x}_0:=(\tilde{x}_0',\tilde{x}_0^n)$,  and  define $ x_\gamma:=\gamma_{\tilde \theta,\tilde v,\tilde x_0}^\lambda(\tilde x_0^n)$, $B:=B_{Cr^{1/2+\delta}}^{n-1}(x_\gamma)$. Let $Z_0$  be the intersection of $Z+b$ of the hyperplane $\{x: x_n=\tilde x_0\}$.  Up to a  harmless small perturbation\footnote{see \cite{GOW}} in the $x_n$ direction, we may assume $Z_0$ is a  transverse complete intersection in $\R^{n-1}$.  Define a smooth map $\Phi: \R^{n-1}\rightarrow \R^{n-1}$ as follows 
\beqq
\Phi(x'):=-\partial_\xi\phi(x',\tilde{x}_0^n,\xi_{\tilde \theta}).
\eeqq
\begin{proposition}\label{eq:local}
	Let $h$ be concentrated on bigger wave packets from $\mathbb{T}_{\tilde \theta,w}\cap \mathbb{T}_Z$. Then
	\beq\label{eq:235}
	\|(h|_{\widetilde{W}})^{\widehat{}}\|_2\lesssim \|\hat{h}\chi_{N_{C\rho^{1/2+\delta_m}}(\Phi(Z_0)\cap \Phi(CB))}\|_2+{\rm RapDec}(\rho)\|h\|_2.
	\eeq
	\end{proposition}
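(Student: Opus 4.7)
The plan is to pass to the Fourier side and track the support of each smaller-scale wave packet $\widehat{h_{\tilde T}}$, using Plancherel together with the geometric constraints on the index $(\tilde\theta,\tilde v)$ coming from tangency to $Z+b$ and from the bigger-scale concentration. From the explicit wave packet formula in Section \ref{wav},
\begin{equation*}
h_{\tilde T_{\tilde\theta,\tilde v}}(\xi)=\bigl(\tfrac{\rho^{1/2}}{2\pi}\bigr)^{n-1}\widehat{h\psi_{\tilde\theta}}(\tilde v)\, e^{2\pi i\tilde v\cdot\xi}\,\tilde\psi_{\tilde\theta}(\xi),
\end{equation*}
so $\widehat{h_{\tilde T}}$ lives (up to rapid decay) in the $O(\rho^{1/2})$-ball around $\tilde v$. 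Writing $h|_{\widetilde W}=\sum_{\tilde T\in\widetilde W}h_{\tilde T}+\mathrm{RapDec}(\rho)\|h\|_2$ and combining Plancherel with Lemma \ref{orth}, the problem reduces to showing that the set of positions $\tilde v$ attached to tubes in $\widetilde W$ lies in $N_{C\rho^{1/2+\delta_m}}(\Phi(Z_0)\cap\Phi(CB))$; a standard Schur-type argument, exploiting the $O(\rho^{-1/2})$ Fourier-localisation of $\psi_{\tilde\theta}$, then converts $\sum_{(\tilde\theta,\tilde v)\in\widetilde W}|\widehat{h\psi_{\tilde\theta}}(\tilde v)|^{2}$ into $\int_{S}|\hat h|^2$ on the claimed neighbourhood $S$.

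To locate $\tilde v$ I would combine three pieces. \emph{(i) Tangency.} Since $\tilde T\in\widetilde{\mathbb{T}}_b$, its central curve $\tilde\gamma(x_n):=\gamma_{\tilde\theta,\tilde v,\tilde x_0}^\lambda(x_n)$ satisfies $\tilde\gamma(\tilde x_0^n)\in N_{C\rho^{1/2+\delta_m}}(Z_0)$ by intersecting the tangency condition with the slice $\{x_n=\tilde x_0^n\}$. \emph{(ii) Parameterisation via $\Phi$.} Setting $x_n=\tilde x_0^n$ in the defining relation $\partial_\xi\phi^\lambda_{\tilde x_0}(\tilde\gamma(x_n),x_n,\xi_{\tilde\theta})+\tilde v=0$, and using $\phi^\lambda(x,\xi)=\lambda\phi(x/\lambda,\xi)$, one realises $\tilde v$ as an affine shift of $\Phi(\tilde\gamma(\tilde x_0^n))$; the uniform bounds \eqref{eq:uni} make $\Phi$ bi-Lipschitz on the relevant domain, so the $C\rho^{1/2+\delta_m}$-neighbourhood of $Z_0$ is pushed into the $C\rho^{1/2+\delta_m}$-neighbourhood of $\Phi(Z_0)$. \emph{(iii) Bigger-scale localisation.} The hypothesis that $h$ is concentrated on bigger wave packets from $\mathbb{T}_{\tilde\theta,w}\cap\mathbb{T}_Z$ means each $\tilde T\in\widetilde W$ is a smaller companion of a bigger tube with fixed direction $\tilde\theta$ and position $w$; by the small/big comparison lemmas of Section \ref{wav}, this forces $\tilde\gamma(\tilde x_0^n)\in B=B_{Cr^{1/2+\delta}}(x_\gamma)$, and hence $\tilde v\in\Phi(CB)$ by the same Lipschitz estimate. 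Intersecting gives the required containment.

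The hard part will be step (ii)--(iii) in the genuinely curved setting. In the translation-invariant case treated in \cite{GOW} the analogue of $\Phi$ is affine, so both the tangency condition and the inter-scale comparisons reduce to linear algebra, and the image of a neighbourhood is trivially a neighbourhood of the image. Here $\Phi$ depends nonlinearly on $x'$, and although the straight condition keeps $G(\xi)$ independent of $x$, the diffeomorphism property of $\Phi$ comes only with the quantitative bounds available from the reduced form. I would therefore have to verify carefully that the Taylor errors in passing from $\tilde\gamma$ to $\Phi(\tilde\gamma)$ stay within $O(\rho^{1/2+\delta_m})$, which is precisely where the smallness parameter $c_{\rm par}$ from condition $\mathbf{C}_2$ and the uniform derivative bounds \eqref{eq:uni1} enter. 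Once this is controlled, the Plancherel/orthogonality step and the Schur summation follow along the lines of the analogous proof in \cite{GOW}.
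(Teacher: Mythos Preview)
Your overall strategy---Plancherel plus $L^2$-orthogonality, followed by a Schur-type kernel bound controlled by the geometric location of the indices $(\tilde\theta,\tilde v)$---is exactly what the paper does. The geometric steps (i)--(iii) are also essentially the paper's argument. But there is a genuine gap in the analytic part, and it stems from the wave packet formula you wrote down.

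In Section~\ref{wav} the small-scale wave packets are \emph{not} given by
\[
h_{\tilde T_{\tilde\theta,\tilde v}}(\xi)=\Big(\tfrac{\rho^{1/2}}{2\pi}\Big)^{n-1}\widehat{h\psi_{\tilde\theta}}(\tilde v)\,e^{2\pi i\tilde v\cdot\xi}\,\tilde\psi_{\tilde\theta}(\xi);
\]
rather, they are defined with a modulation by the phase at the ball centre $\tilde x_0$:
\[
h_{\tilde T_{\tilde\theta,\tilde v}}(\xi)=e^{-2\pi i\phi^\lambda(\tilde x_0,\xi)}\,(h_{\tilde x_0})_{\tilde\theta,\tilde v}(\xi),\qquad h_{\tilde x_0}:=h\,e^{2\pi i\phi^\lambda(\tilde x_0,\cdot)}.
\]
Consequently the Fourier coefficient that actually appears is $(h_{\tilde x_0}\psi_{\tilde\theta})^{\widehat{}}(\tilde v)=\hat h\ast\big(e^{2\pi i\phi^\lambda(\tilde x_0,\cdot)}\psi_{\tilde\theta}\big)^{\widehat{}}(\tilde v)$, and $\widehat{h_{\tilde T}}$ is \emph{not} concentrated in a $\rho^{1/2}$-ball about $\tilde v$. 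Your Schur step therefore compares $\hat h$ against the wrong kernel, and the mass lands at the wrong location: $\tilde v$ itself does not lie in $N_{C\rho^{1/2+\delta_m}}(\Phi(Z_0)\cap\Phi(CB))$---you even note in (ii) that $\tilde v$ is only an \emph{affine shift} of $\Phi(\tilde\gamma(\tilde x_0^n))$, and that shift is precisely $\partial_\xi\phi^\lambda(\tilde x_0,\xi_{\tilde\theta})$, which is not small.

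The paper repairs this by keeping the modulation and running a stationary phase argument on the correct kernel $\big(e^{2\pi i\phi^\lambda(\tilde x_0,\cdot)}\psi_{\tilde\theta}\big)^{\widehat{}}(\tilde v-y)$: after the change of variables $\xi\mapsto\rho^{-1/2}\xi+\xi_{\tilde\theta}$ one sees that this kernel is rapidly decaying unless $y\in B_{\rho^{1/2+\delta_m}}\big(\tilde v-\partial_\xi\phi^\lambda(\tilde x_0,\xi_{\tilde\theta})\big)$. It is this \emph{shifted} point $\tilde v-\partial_\xi\phi^\lambda(\tilde x_0,\xi_{\tilde\theta})$, not $\tilde v$, that your computation in (ii) identifies with $\Phi(\tilde\gamma(\tilde x_0^n))$, and hence with the target neighbourhood. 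So the missing modulation and the unexplained affine shift are two halves of the same error; once you restore the phase $e^{2\pi i\phi^\lambda(\tilde x_0,\xi)}$ in the kernel and carry out the stationary phase, the shift disappears and your outline becomes the paper's proof.
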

\begin{proof}
First we claim that

	{\bf \noindent Claim:}	Let $\tilde T_{\tilde \theta,\tilde v} \in \widetilde{W}$, then 
	\beq\label{claim3}
	\Big|\sum_{\tilde T_{\tilde \theta,\tilde v}\in \widetilde{W}}\big(\rho^{\frac{n-1}{2}}e^{2\pi i \phi^\lambda(\tilde x_0,\xi)}\psi_{\tilde \theta}\big)^{\widehat{}}(\tilde v-y)\Big|\leq C \chi_{N_{C\rho^{1/2+\delta_m}}(\Phi(Z_0)\cap \Phi(CB))}(y)+{\rm RapDec}(\rho)\|h\|_2.
	\eeq
We firstly take the above claim for granted and  continue the proof of \eqref{eq:235}.
By the definition, 
\beqq
h|_{\widetilde{W}}=\sum_{\tilde T\in \widetilde W} h_{T}.
\eeqq
Therefore, by the orthogonality property, we have 
\beqq
\|(h|_{\widetilde W})\|_{2}^2\lesssim \sum_{\tilde T\in \widetilde W}\|h_{\tilde T}\|_2^2.
\eeqq
By the Plancherel's theorem, we get 
\beqq
\sum_{\tilde T\in \widetilde W}\|h_{\tilde T}\|_2^2\lesssim \rho^{n-1}\sum_{\tilde T\in \widetilde W}|(h_{\tilde x_0}\psi_{\tilde \theta})^{\widehat{}}(v)|^2\|\psi_{\tilde \theta}\|_2^2=\rho^{\frac{n-1}{2}}\sum_{\tilde T\in \widetilde W}|(h_{\tilde x_0}\psi_{\tilde \theta})^{\widehat{}}(v)|^2.
\eeqq 
Note that 
\beqq
(h_{\tilde x_0}\psi_{\tilde \theta})^{\widehat{}}(\tilde v)=\hat{h}\ast(e^{2\pi i \phi^\lambda(\tilde x_0,\cdot)}\psi_{\tilde \theta})^{\widehat{}}(\tilde v).
\eeqq
Using H\"older's inequality, we obtain 
\beq
\rho^{\frac{n-1}{2}}\sum_{\tilde T\in \widetilde W}|(h_{\tilde x_0}\psi_{\tilde \theta})^{\widehat{}}(v)|^2\lesssim \int |\hat{h}(y)|^2\Big(\sum_{\tilde T\in \widetilde{W}}|(\rho^{\frac{n-1}{2}}e^{2\pi i \phi^\lambda(\tilde x_0,\cdot)}\psi_{\tilde \theta})^{\widehat{}}(\tilde v-y)|\Big)dy.
\eeq
Then \eqref{eq:235} follows from \eqref{claim3}. Therefore, it remains to show the claim.	
	By changing of variables: $\xi\rightarrow \rho^{-1/2}\xi+\xi_{\tilde \theta}$, we have 
	\beqq
	\big(e^{2\pi i \phi^\lambda(\tilde x_0,\xi)}\psi_{\tilde \theta}\big)^{\widehat{}}(\tilde v-y)=\rho^{-\frac{n-1}{2}}\int e^{2\pi i \phi^\lambda(\tilde x_0, \rho^{-1/2}\xi+\xi_{\tilde \theta })-2\pi i (\tilde v-y)(\rho^{-1/2}\xi+\xi_{\tilde \theta})}
	\psi(\xi)d\xi.
	\eeqq
	Recall that 
	\beqq
	\partial_\xi \phi^\lambda_{x_0}(\gamma_{\tilde \theta,\tilde v,\tilde x_0}^\lambda(t),t,\xi_{\tilde \theta})=\tilde v.
	\eeqq
	A stationary phase argument shows that the above integral  is essentially nontrivial  when $y\in B_{\rho^{1/2+\delta_m}}^n(\tilde v-\partial_\xi\phi^\lambda(\tilde x_0,\xi_{\tilde \theta}))$.	
By our assumption and definition, we have 
\beqq
\gamma_{\tilde \theta,\tilde v,\tilde x_0}^\lambda(\tilde x_0^n)\subset N_{C\rho^{1/2+\delta_m}}(Z_0)\cap CB,
\eeqq	
Thus 
\beqq
\tilde v\subset N_{C\rho^{1/2+\delta_m}}(\Psi(Z_0))\cap \Psi(CB),
\eeqq	
where $\Psi:\R^{n-1}\rightarrow \R^{n-1}$ is defined by 
\beqq
\Psi(x'):=\partial_{\xi}\phi^\lambda_{x_0}(x',\tilde x_0^n,\xi_{\tilde \theta}).
\eeqq	
Recall that 
\beqq
\phi^\lambda_{\tilde{x}_0}(x,\xi)=\phi^\lambda(x,\xi)-\phi^\lambda(\tilde{x}_0,\xi),
\eeqq	
thus we obtain the desired results.		
	\end{proof}

\begin{proposition}\label{pro:2}
	Assume  $Z_0=(Z+b)\cap \{x_n=\tilde x_0^n\}$ and $B=B_{r^{1/2+\delta_m}}^n(\tilde{x}_0)$. Suppose $h$ is concentrated on scale $r$ wave packets in $\mathbb{T}_{\tilde \theta, w}\cap \mathbb{T}_Z$. Then 
	\beq
	\int |\hat{h}|^2\cdot  \chi_{N_{C\rho^{1/2+\delta_m}}(\Phi(Z_0)\cap \Phi(CB))}\lesssim r^{O(\delta_m)}\Big(\frac{\rho}{r}\Big)^{(n-m)/2}\|h\|_2^2.
	\eeq
\end{proposition}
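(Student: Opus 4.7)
The plan is to prove Proposition \ref{pro:2} via a transverse equidistribution argument carried out entirely in frequency space, gaining the factor $(\rho/r)^{(n-m)/2}$ from a codimension-$(n-m)$ intersection. The key geometric point is that, since $h$ is concentrated on scale-$r$ wave packets tangent to the $m$-dimensional variety $Z$, its Fourier transform $\hat h$ is essentially supported in a thin $r^{1/2+\delta_m}$-neighborhood of the $(m-1)$-dimensional surface $\Phi(Z\cap\{x_n=\tilde x_0^n\})\cap\Phi(C'B)$. Restricting further to the thinner $\rho^{1/2+\delta_m}$-neighborhood of $\Phi(Z_0)\cap\Phi(CB)$ then traps $\hat h$ into a subset whose volume is smaller by $(\rho/r)^{(n-m)/2}$ in the $n-m$ normal directions, which is the desired gain.

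First I would establish the frequency-side localization of $\hat h$. Mimicking the proof of Proposition \ref{eq:local} with $b=0$ and with the role of the smaller scale $\rho$ played by the larger scale $r$, a stationary phase analysis yields
\beqq
\big\|\hat h\cdot \chi_{\{N_{Cr^{1/2+\delta_m}}(\Phi(Z\cap\{x_n=\tilde x_0^n\})\cap\Phi(C'B))\}^c}\big\|_2=\mathrm{RapDec}(r)\|h\|_2.
\eeqq
Equivalently, each wave packet $h_{T_{\theta,v}}$ with $T_{\theta,v}\in\mathbb{T}_{\tilde\theta,w}\cap\mathbb{T}_Z$ has Fourier transform peaked at the single point $v$, and the tangency of $T_{\theta,v}$ to $Z$, combined with the defining relation $\partial_\xi\phi^\lambda_{\tilde x_0}(\gamma_{\theta,v,\tilde x_0}^\lambda(\tilde x_0^n),\tilde x_0^n,\xi_\theta)+v=0$ and the Gauss-map condition, forces $v$ to lie in an $r^{1/2+\delta_m}$-neighborhood of $\Phi(Z\cap\{x_n=\tilde x_0^n\})$.

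Next I would carry out the volume-comparison and equidistribution step. Since $|b|\lesssim r^{1/2+\delta_m}$, the two slices $Z\cap\{x_n=\tilde x_0^n\}$ and $Z_0=(Z+b)\cap\{x_n=\tilde x_0^n\}$ differ by at most a translation of size $r^{1/2+\delta_m}$ inside the hyperplane $\{x_n=\tilde x_0^n\}$ and have locally parallel tangent spaces; under the smooth, bounded-derivative map $\Phi$ (whose $C^k$-bounds follow from the reduced form and \eqref{eq:uni1}) this remains true for their images inside $\Phi(C'B)\subset\R^{n-1}$. Partition $\Phi(C'B)$ into $r^{1/2}$-cubes matching the wave-packet grid; any such cube contributing to the target integral must sit inside both the $r^{1/2+\delta_m}$-neighborhood of $\Phi(Z\cap\{x_n=\tilde x_0^n\})$ (to carry mass of $\hat h$) and the $\rho^{1/2+\delta_m}$-neighborhood of $\Phi(Z_0)$ (by the cut-off). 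A direct volume count using the parallel-tangent-space picture shows the number of such contributing cubes is at most $r^{O(\delta_m)}(\rho/r)^{(n-m)/2}$ times the total number of $r^{1/2}$-cubes inside $N_{Cr^{1/2+\delta_m}}(\Phi(Z\cap\{x_n=\tilde x_0^n\})\cap\Phi(C'B))$. Combined with the $L^2$-orthogonality of the wave packets (Lemma \ref{orth}), for which each $r^{1/2}$-cube contributes comparably to $\|h\|_2^2$, multiplying cube count by per-cube contribution delivers the claimed estimate.

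The main obstacle will be justifying the intersection-count when the transverse complete intersections are curved and $\Phi$ is only a smooth diffeomorphism, rather than an affine map as in the translation-invariant setting treated in \cite{GOW}. One must verify that, up to $r^{O(\delta_m)}$ losses, the normal directions to $\Phi(Z_0)$ inside $\R^{n-1}$ align with those to $\Phi(Z\cap\{x_n=\tilde x_0^n\})$, so that the intersection of the two tubular neighborhoods genuinely has the product structure of scale $r^{1/2+\delta_m}$ in the $m-1$ tangential directions and $\rho^{1/2+\delta_m}$ in the $n-m$ normal directions. This is done by Taylor-expanding the defining polynomials of $Z$ at scale $r^{1/2+\delta_m}$ and controlling the angle between tangent spaces by the uniform $C^2$-bounds \eqref{eq:uni1} on $\phi(p(x),\xi)$; a harmless small perturbation in $x_n$ (already used before Proposition \ref{eq:local}) ensures $Z_0$ remains a transverse complete intersection, so that the geometric lemmas of \cite{GHI,GOW} apply verbatim.
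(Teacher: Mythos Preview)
Your volume-counting argument has a genuine gap at the final step. You correctly observe that the region $N_{C\rho^{1/2+\delta_m}}(\Phi(Z_0)\cap\Phi(CB))$ occupies only a $(\rho/r)^{(n-m)/2}$ fraction of the $r^{1/2+\delta_m}$-neighborhood where $\hat h$ lives, but you then assert that ``each $r^{1/2}$-cube contributes comparably to $\|h\|_2^2$'' and appeal to Lemma~\ref{orth}. This is not what $L^2$-orthogonality gives: orthogonality tells you $\|h\|_2^2\sim\sum_{T}\|h_T\|_2^2$, but places no constraint on how the individual masses $\|h_{T_{\theta,v}}\|_2^2$ are distributed among the $v$'s. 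A priori, all the $L^2$ mass of $\hat h$ could sit in a single $r^{1/2}$-cube inside the $\rho^{1/2+\delta_m}$-neighborhood, and the volume ratio would yield nothing.

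What is missing is exactly the uncertainty-principle input that the paper supplies through Proposition~\ref{prob} and Lemma~\ref{local}. The paper does \emph{not} try to localize $\hat h$ near an $(m-1)$-surface as you do; instead it exploits the localization of $h$ itself in $\xi$-space. Tangency to $Z$ forces all the contributing caps $\theta$ to satisfy $\measuredangle(G(\xi_\theta),V)\lesssim r^{-1/2+\delta_m}$ for a minimal-dimension subspace $V$, hence $h$ is supported in an $r^{-1/2+\delta_m}$-slab around the affine subspace $V'$. Restricting $\hat h$ to any slice $\Pi$ parallel to $\widetilde V=(V')^\perp$, its inverse Fourier transform on $\Pi$ is then supported in a ball of radius $r^{-1/2+\delta_m}$, so Lemma~\ref{local} forces $|\hat h|^2$ to be locally constant on $\Pi$ at scale $r^{1/2-\delta_m}$. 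This is the equidistribution that converts the volume count into an $L^2$ gain; one then uses the ball-covering count (Proposition~\ref{proc}) on each slice and integrates over $\Pi$. Your proposal never invokes any Fourier-support restriction on $h$ in the $\xi$ variable, only on $\hat h$, and therefore has no mechanism to force $|\hat h|^2$ to spread out. To repair the argument you must bring in the $\xi$-side support information via the subspace $V'$ and the slicing in the $\widetilde V$ directions, exactly as in the paper's proof.
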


 Define $T_{Z,B,\tilde \theta}$ as follows 
\beqq
\mathbb{T}_{V,B,\tilde \theta}:=\{(\theta,v):T_{\theta,v}\cap B\neq \emptyset,\measuredangle(G(\xi_\theta),V)\lesssim r^{-1/2+\delta_m},\; {\rm dist}(\theta,\tilde \theta)\lesssim \rho^{-1/2}\}.
\eeqq
Let $V$ be an $m$-dimensional subspace defined by 
\beqq
V:=\{x\in \R^n: \sum_{j=1}^na_{i,j}x_j=0, i=1,\cdots, n-m\}.
\eeqq
If  $\phi(x,\xi)=x'\cdot \xi+\frac{1}{2}x_n|\xi|^2$, then 
\beqq
G(\xi)=\frac{(-\xi,1)}{\sqrt{1+|\xi|^2}}.
\eeqq
It is easy to see that $ \{\xi\in \R^n : G(\xi)\in V\} $
defines an affine subspace. Therefore, the set 
$$\{\xi\in \R^n:\measuredangle(G(\xi),V)\lesssim r^{-1/2+\delta_m}\}$$
is contained in a $Cr^{-1/2+\delta_m}$-neighborhood of an affine subspace. However, if we only know  $\phi(x,\xi)$ satisfies the Carleson-Sj\"olin conditions with the  convex and straight assumptions, things may be a little trickier. Since in the general setting, $\{\xi\in \R^n: G(\xi)\in V\}$ may be a curved submanifold. 
Specially, for our case,
 $$G(\xi)=\frac{(-\partial_\xi h(\xi),1)}{\sqrt{1+|\partial_\xi h(\xi)|^2}},$$
 where $h(\xi)$ is defined in \eqref{eq:hxi} and satisfies \eqref{eq:hass}.
 Let $L$ denote the submanifold 
 \beqq
 L:=\{\xi\in \R^n: G(\xi)\in V\},
  \eeqq
  by the implicit function  theorem, we know the dimension of $L$ is $m-1$. Define $V'$ to be the tangent space $T_{\tilde \xi } L$ of $L$ at a given point $\tilde\xi \in L$ with ${\rm dist}(\tilde{\xi},\tilde \theta)\lesssim \rho^{-1/2}$.
\begin{lemma}
	The set
\beqq
\{\xi \in \R^n:\measuredangle(G(\xi),V)\lesssim r^{-1/2+\delta_m},\;{\rm dist}(\xi,\tilde \theta)\lesssim \rho^{-1/2}\}
\eeqq
is contained in a $Cr^{-1/2+\delta_m}$-neighborhood of an affine subspace $V'$.
\end{lemma}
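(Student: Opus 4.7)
The plan is to combine two observations: first, the angular condition $\measuredangle(G(\xi),V)\lesssim r^{-1/2+\delta_m}$ forces $\xi$ into a $Cr^{-1/2+\delta_m}$-neighborhood of the curved submanifold $L$; second, on the localization scale $\rho^{-1/2}$ imposed by $\dist(\xi,\tilde\theta)\lesssim\rho^{-1/2}$, the submanifold $L$ agrees with its tangent space $V'=T_{\tilde\xi}L$ up to a quadratic Taylor error of size $O(\rho^{-1})$. Since $r^{1/2}<\rho$, we have $\rho^{-1}\leq r^{-1/2}\leq r^{-1/2+\delta_m}$, so both errors are of the desired order.

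For the first step, I would show that $G$ is a local diffeomorphism with uniformly bounded derivative and uniformly bounded inverse. The key point is that the unnormalized map $G_0(\xi)=(-\partial_\xi h(\xi),1)$ has Jacobian whose top block is $-\partial_{\xi\xi}^2 h(\xi)$, which is invertible with bounded inverse thanks to the convexity bound \eqref{eq:hass}; the normalizing factor $(1+|\partial_\xi h|^2)^{-1/2}$ is smooth and bounded below, so the same property carries over to $G$ as a map into $S^{n-1}$. Consequently, $L=G^{-1}(V\cap S^{n-1})$ is a smooth $(m-1)$-dimensional submanifold, and the quantitative implicit function theorem yields
$$\dist(\xi,L)\lesssim\dist\bigl(G(\xi),V\cap S^{n-1}\bigr)\lesssim \measuredangle(G(\xi),V),$$
where the last inequality is the standard small-angle comparison between Euclidean and angular distance on $S^{n-1}$.

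For the second step, fix $\tilde\xi\in L$ with $\dist(\tilde\xi,\tilde\theta)\lesssim\rho^{-1/2}$ and let $V'=T_{\tilde\xi}L$. The second fundamental form of $L$ is uniformly bounded, since it is controlled by the $C^\infty$ bounds on $h$ together with the non-degeneracy of $\partial_{\xi\xi}^2 h$ via the implicit function theorem. Thus for any $\eta\in L$ with $|\eta-\tilde\xi|\lesssim\rho^{-1/2}$, a second-order Taylor expansion gives
$$\dist(\eta,\tilde\xi+V')\lesssim|\eta-\tilde\xi|^2\lesssim\rho^{-1}.$$
Combining the two steps: given $\xi$ in the set of interest, select $\eta\in L$ with $|\xi-\eta|\lesssim r^{-1/2+\delta_m}$ from step one. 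Since $|\xi-\tilde\xi|\lesssim\rho^{-1/2}$ and $r^{-1/2+\delta_m}\leq\rho^{-1/2}$, we obtain $|\eta-\tilde\xi|\lesssim\rho^{-1/2}$, so step two applies and gives $\dist(\eta,\tilde\xi+V')\lesssim\rho^{-1}\leq r^{-1/2}$. The triangle inequality then yields $\dist(\xi,\tilde\xi+V')\lesssim r^{-1/2+\delta_m}$, which is the desired bound.

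The main obstacle, modest as it is, lies in carefully justifying the submersion step: one must verify that the normalization factor in $G$ does not degrade the rank of its differential, and that the comparison constants in $\dist(\xi,L)\lesssim\measuredangle(G(\xi),V)$ are uniform across the support of $a$. Both points are secured by the quantitative hypothesis \eqref{eq:hass} combined with the uniform $C^\infty$ bounds \eqref{eq:uni} built into the reduced form. The remainder of the argument is a standard tangent-plane approximation of a smooth submanifold on a scale $\rho^{-1/2}$ that is much smaller than the inverse of its curvature.
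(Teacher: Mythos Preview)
Your proof is correct and follows essentially the same two-step argument as the paper: first contain the set in a $Cr^{-1/2+\delta_m}$-neighborhood of $L$ (the paper declares this step ``obvious'' while you supply the justification via the submersion property of $G$ coming from \eqref{eq:hass}), then linearize $L$ at $\tilde\xi$ using a second-order Taylor expansion and the bound $\rho^{-1}\leq r^{-1/2}$. One minor slip: the inequality $r^{-1/2+\delta_m}\leq\rho^{-1/2}$ you invoke need not hold when $\rho$ is close to $r$, but this is harmless since $|\eta-\tilde\xi|\lesssim\rho^{-1/2}+r^{-1/2+\delta_m}$ still squares to $O(\rho^{-1}+r^{-1+2\delta_m})\lesssim r^{-1/2+\delta_m}$.
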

\begin{proof}
Obviously, the set 
\beqq
\{\xi\in \R^n: \measuredangle(G(\xi),V)\lesssim r^{-1/2+\delta_m}\}
\eeqq	
is contained in a  $Cr^{-1/2+\delta_m}$-neighborhood of  $L$.
Recall that 
$$V'=T_{\tilde \xi}L,\;\text{and}\;{\rm dist}(\tilde \xi, \tilde \theta)\lesssim r^{-1/2+\delta_m},$$
thus $${\rm dist}(\tilde \xi,\xi)\lesssim \rho^{-1/2}.$$	
Therefore, it suffices to show 
\beqq
N_{Cr^{-1/2+\delta_m}}(L)\cap \{\xi: {\rm dist}(\xi,\tilde \xi)\lesssim \rho^{-1/2}\}\subset N_{Cr^{-1/2+\delta_m}}(V').
\eeqq	
Without loss of generality, we may assume $\tilde \xi$  and $L$ can be  parametrized by $(0,u(0))$ and  
\beqq
L:=\{(\xi',u(\xi')): \xi'\in \R^{m-1}\}, \quad\text{ with}\quad  u'(0)=0,
\eeqq
respectively. Therefore, it remains to show : if $|\xi'|\lesssim \rho^{-1/2}$
\beq\label{eq:245}
|u(\xi')|\lesssim r^{-1/2+\delta_m}.
\eeq
Indeed, \eqref{eq:245} can be easily obtained from Taylor's formula and  the fact the second order derivatives of $u$ can be uniformly bounded.
	\end{proof}
Define $\widetilde{V}$ to be the orthogonal complement in $\R^{n-1}$, that is 
$\widetilde{V}:=(V')^{\perp},$
and $\bar{V}\subset \R^{n-1}$ to be identified with $V\cap \{x_n=0\}$.
\begin{lemma}
Let $V$ and $\widetilde{V}$ be defined as above, then $\widetilde{V}$ and $V$ are transverse in the sense that 
\beq\label{eq:122}
\mathop{{\rm Angle}}_{v\in \bar {V}\backslash\{0\},\tilde v\in \widetilde{V}\backslash\{0\}}(v,\tilde v)\gtrsim 1.
\eeq
\end{lemma}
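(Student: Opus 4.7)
The plan is to exploit the fact that $V'$ is a small perturbation of $\bar V$ under the Hessian $H := \partial^2_{\xi\xi} h(\tilde\xi)$, which by \eqref{eq:hass} is close to $I_{n-1}$. Once we know that $\widetilde V = (V')^\perp$ is orthogonal to $V'$, and that $V'$ is nearly equal to $\bar V$ in a quantitative sense, the almost-orthogonality of $\widetilde V$ and $\bar V$ (hence the transversality) will follow from a short linear-algebraic computation.

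\emph{Step 1: Identify $V'$.} Write $V = \ker A$ for some $(n-m)\times n$ matrix $A = (a_{i,j})$, and let $A'$ denote the submatrix of $A$ consisting of its first $n-1$ columns, together with $a := (a_{1,n},\ldots,a_{n-m,n})^\top$. Since
$$G(\xi) \propto (-\partial_\xi h(\xi),1),$$
the condition $G(\xi) \in V$ is equivalent to $A'\partial_\xi h(\xi) = a$. (If $V\subset\{x_n=0\}$, then $G(\xi)_n > 0$ forces $L=\emptyset$, so we may exclude that degenerate situation.) Differentiating this defining equation at $\tilde\xi$,
$$V' = T_{\tilde\xi} L = \ker(A' H) = H^{-1}(\ker A') = H^{-1}\bar V,$$
where we identify $\bar V$ with $\ker A'\subset\R^{n-1}$ by dropping the (vanishing) last coordinate. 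In particular $\dim V' = \dim\bar V = m-1$ and $HV' = \bar V$.

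\emph{Step 2: Angle estimate.} Fix a unit vector $\tilde v \in \widetilde V$ and any $v\in\bar V$. By the previous step we may write $v = Hw$ for some $w\in V'$. Since $\tilde v \perp V'$ and $w\in V'$, we have $\langle \tilde v,w\rangle = 0$, hence
$$\langle \tilde v, v\rangle \;=\; \langle \tilde v, Hw\rangle \;=\; \langle H^\top \tilde v,w\rangle \;=\; \langle (H^\top - I)\tilde v, w\rangle.$$
By Cauchy--Schwarz,
$$|\langle \tilde v, v\rangle| \;\le\; \|H - I\|_{\rm op}\,\|\tilde v\|\,\|w\| \;\le\; \|H-I\|_{\rm op}\,\|H^{-1}\|_{\rm op}\,\|\tilde v\|\,\|v\|.$$

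\emph{Step 3: Conclusion.} By \eqref{eq:hass} we have $\|H-I\|_{\rm op} \ll 1$, whence also $\|H^{-1}\|_{\rm op} \le 2$. Therefore $|\cos\angle(\tilde v, v)| \ll 1$, so $\angle(\tilde v,v)$ is close to $\pi/2$, and in particular uniformly bounded below by a positive constant, which is exactly \eqref{eq:122}.

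The only mildly subtle point is the algebraic identification $V' = H^{-1}\bar V$ in Step 1; the rest is routine. Since $H$ is explicitly the Hessian of $h$ at $\tilde\xi$, which is normalized via \eqref{eq:hass}, the quantitative control needed in Step 2 is automatic, and no additional geometric information about $V$ is required.
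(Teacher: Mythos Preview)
Your proof is correct and follows essentially the same strategy as the paper's. Both arguments identify the relationship between $V'$, $\widetilde V$, and $\bar V$ through the Hessian $H=\partial_{\xi\xi}^2 h(\tilde\xi)$ and then use $\|H-I\|_{\rm op}\ll 1$ together with the exact orthogonality $\widetilde V\perp V'$ (equivalently $\bar V\perp\operatorname{span}\{\alpha_i'\}$) to deduce near-orthogonality of $\widetilde V$ and $\bar V$. The only cosmetic difference is that the paper parametrises $\widetilde V=\operatorname{span}\{H\alpha_i'\}$ and computes $\langle H\alpha_i',\bar v\rangle$, whereas you parametrise $\bar V=HV'$ and compute $\langle\tilde v,Hw\rangle$; these are dual versions of the same identity.
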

\begin{proof}
Since $G(\tilde \xi)\in V$, we may write it explicitly as follows
\beq\label{eq:121}
\sum_{j=1}^{n-1}a_{i,j}\partial_{\xi_j}h(\tilde \xi)-a_{i,n}=0,\; i=1,\cdots, n-m.
\eeq
Define $\alpha_i:=(a_{i,1},\cdots,a_{i,n})$ and  $\alpha_i':=(a_{i,1},\cdots,a_{i,n-1})$.
From \eqref{eq:121},  we have 
\beqq
{\rm rank}(\alpha_1',\cdots \alpha_{n-m}')=n-m.
\eeqq
Since $V'=T_{\tilde \xi} L$,  and $\widetilde{V}=(V')^{\perp}$,  by \eqref{eq:121}, we have
\beqq
\widetilde{V}={\rm span}\langle\partial_{\xi\xi}^2 h(\tilde \xi )\alpha_1',\cdots, \partial_{\xi\xi}^2h(\tilde \xi ) \alpha_{n-m}'\rangle.
\eeqq
To prove \eqref{eq:122}, it suffices to show: for each $\bar v \in \bar V\backslash \{0\}$, then \beq\label{eq:123}  \langle \partial_{\xi\xi}^2 h(\tilde \xi)\alpha_i', \bar v\rangle \ll 1.
\eeq
Since $$\langle \alpha_i', \bar v\rangle=0 $$
and 
\beqq
\|\partial_{\xi\xi}^2 h(\tilde \xi)-I_{n-1}\|_{\rm op}\leq \varepsilon_0,
\eeqq
thus \eqref{eq:123} follows immediately by choosing $\varepsilon_0$ sufficiently small.
\end{proof}
\begin{lemma}\cite{Guth18}\label{local}
	Suppose that $G:\R^n\rightarrow \mathbb{C}$ is a function, and $\hat{G}$ is supported in a ball $B_r^n(\xi_0)$ of radius $r$. Then, for any ball $B_\rho^n(x_0)$ of radius $\rho\leq r^{-1}$,
	\beq
	\int_{B_\rho^n(x_0)}|G|^2\lesssim \frac{|B_\rho^n|}{|B_{r^{-1}}^n|}\int |G|^2.
	\eeq
	\end{lemma}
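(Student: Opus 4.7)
The lemma is a quantitative form of the Fourier uncertainty principle: since $\hat G$ is supported in a ball of radius $r$, the function $G$ cannot concentrate in balls of radius smaller than $r^{-1}$. The plan is to realize this via a reproducing identity together with a pointwise Cauchy--Schwarz estimate.

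First, since the modulation $G(x) \mapsto e^{-2\pi i \xi_0 \cdot x}G(x)$ preserves $|G|^2$, I may assume that $\hat G$ is supported in $B_r^n(0)$. Choose a Schwartz bump $\eta$ with $\hat\eta \equiv 1$ on $B_1^n(0)$ and $\hat\eta$ supported in $B_2^n(0)$, and set $K(x):=r^n\eta(rx)$. Then $\hat K \equiv 1$ on $B_r^n(0)$, so the reproducing identity $G = G \ast K$ holds, and $K$ satisfies $\|K\|_1 \lesssim 1$, $\|K\|_\infty \lesssim r^n$, and $|K(x)|\lesssim_N r^n(1+r|x|)^{-N}$ for every $N$.

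Pointwise Cauchy--Schwarz then yields $|G(x)|^2 = |(G\ast K)(x)|^2 \leq \|K\|_1\cdot(|G|^2\ast|K|)(x) \lesssim (|G|^2\ast|K|)(x)$. Integrating over $B_\rho^n(x_0)$ and swapping the order of integration produces
\begin{equation*}
\int_{B_\rho^n(x_0)}|G(x)|^2\,dx \lesssim \int |G(y)|^2 \bigg(\int_{B_\rho^n(x_0)}|K(x-y)|\,dx\bigg)dy.
\end{equation*}
The inner integral is bounded uniformly in $y$ by $|B_\rho^n|\cdot \|K\|_\infty \lesssim \rho^n r^n$, and the hypothesis $\rho\leq r^{-1}$ guarantees $\rho^n r^n \sim |B_\rho^n|/|B_{r^{-1}}^n|$, which gives the claim.

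The argument is entirely routine and there is no essential obstacle; the only point worth noting is that the hypothesis $\rho\leq r^{-1}$ enters solely to make the crude $L^\infty$ bound on $K$ sharp (the Schwartz decay of $K$ is not actually needed here). Equivalently, one can argue in a single line via Bernstein's inequality $\|G\|_\infty \lesssim r^{n/2}\|G\|_2$ for functions with Fourier support in a ball of radius $r$, and then estimate $\int_{B_\rho^n(x_0)}|G|^2 \leq |B_\rho^n|\cdot \|G\|_\infty^2 \lesssim |B_\rho^n|\cdot r^n\|G\|_2^2$.
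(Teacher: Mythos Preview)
Your proof is correct. The paper does not supply its own proof of this lemma; it simply quotes the statement from \cite{Guth18} as a known tool. Your argument via the reproducing kernel and Cauchy--Schwarz (equivalently, the Bernstein inequality) is the standard way to establish this uncertainty-type estimate, and it matches what one finds in the cited reference.
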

Finally, as a consequence of the above preparations, we have
\begin{proposition}\label{prob}
	Let $V,\bar V,\widetilde V,V'$ be defined as above. If $g$ is concentrated on wave packets from $\mathbb{T}_{V,B,\tilde \theta}$, if $\Pi\subset \{x_n=\tilde x_n^0\}$  is any affine subspace parallel to $\widetilde V$ and $y\in \Pi\cap \Phi(CB)$, then 
	\beqq
	\int_{\Pi \cap B_{\rho^{1/2+\delta_m}}^n(y)}|\hat{g}|^2 \lesssim r^{O(\delta_m)}\big(\frac{\rho^{1/2}}{r^{1/2}}\big)^{{\rm dim}(\widetilde{V})}\int_{\Pi}|\hat{g}|^2.
	\eeqq
\end{proposition}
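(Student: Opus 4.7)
The plan is to exploit the concentration hypothesis to show that $g$, viewed as a function on $\xi$-space $\R^{n-1}$, is essentially supported in an $O(r^{-1/2+\delta_m})$-neighborhood of the affine subspace $V'$, and then use Fourier analysis along $\Pi$ together with Lemma~\ref{local} to obtain the desired transverse concentration bound.

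Since $g$ is concentrated on wave packets from $\mathbb{T}_{V,B,\tilde\theta}$, we may write $g=\sum_{(\theta,v)\in\mathbb{T}_{V,B,\tilde\theta}}g_{\theta,v}+\mathrm{RapDec}(r)\|g\|_2$, and each summand is Fourier-supported on $\tfrac{11}{9}\theta$. The tube constraints $\measuredangle(G(\xi_\theta),V)\lesssim r^{-1/2+\delta_m}$ and $\mathrm{dist}(\theta,\tilde\theta)\lesssim\rho^{-1/2}$, combined with the lemma preceding Lemma~\ref{local}, force the union of these cubes into the $Cr^{-1/2+\delta_m}$-neighborhood of $V'$. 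Equivalently, writing $\xi=\xi_{V'}+\xi_{\widetilde V}$ according to the orthogonal decomposition $\R^{n-1}=V'_{\mathrm{lin}}\oplus\widetilde V$, the $\widetilde V$-component of $\xi$ lies within $Cr^{-1/2+\delta_m}$ of the $\widetilde V$-component of $\tilde\xi$ throughout $\mathrm{supp}\,g$ (up to RapDec error).

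Next, parameterize $\Pi$ as $t\mapsto y+t$ for $t\in\widetilde V$ and set $h(t):=\hat g(y+t)$. Since $t\cdot\xi_{V'}=0$, we have
\[
h(t)=\int_{\R^{n-1}}e^{2\pi iy\cdot\xi}\,e^{2\pi it\cdot\xi_{\widetilde V}}\,g(\xi)\,d\xi,
\]
so the Fourier transform of $h$ on $\widetilde V$ is the partial Fourier transform of $g$ in the $V'$-direction evaluated at $\xi_{\widetilde V}$. By the previous step, this partial Fourier transform is supported, up to RapDec error, in a ball of radius $O(r^{-1/2+\delta_m})$ inside $\widetilde V$. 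I now apply Lemma~\ref{local} on $\widetilde V\cong\R^{n-m}$ with Fourier-support radius $r^{-1/2+\delta_m}$ and spatial ball $B_{\rho^{1/2+\delta_m}}^{n-m}(0)$, obtaining
\[
\int_{\Pi\cap B_{\rho^{1/2+\delta_m}}^n(y)}|\hat g|^2=\int_{B^{n-m}_{\rho^{1/2+\delta_m}}(0)}|h|^2\lesssim\Big(\frac{\rho^{1/2+\delta_m}}{r^{1/2-\delta_m}}\Big)^{n-m}\int_{\widetilde V}|h|^2,
\]
which is exactly the claimed inequality once one absorbs the $\delta_m$-losses into the $r^{O(\delta_m)}$ factor and recalls $\dim\widetilde V=n-m$.

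The main technical obstacle is to track the RapDec error from the wave packet decomposition cleanly, since $g$ is only essentially (not strictly) Fourier-supported in the $O(r^{-1/2+\delta_m})$-neighborhood of $V'$; this is routine but must be carried through both the passage to $\tilde g_y$ and the application of Lemma~\ref{local}. A minor auxiliary point is to verify the hypothesis $\rho^{1/2+\delta_m}\le r^{1/2-\delta_m}$ of Lemma~\ref{local}, which holds provided $\rho\le r^{(1-2\delta_m)/(1+2\delta_m)}$ and hence certainly under the regime $\rho\le r^{1-c}$ appearing in the induction.
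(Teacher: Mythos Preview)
Your proof is correct and follows essentially the same approach as the paper: both arguments use the lemma preceding Lemma~\ref{local} to confine $\mathrm{supp}\,g$ to an $O(r^{-1/2+\delta_m})$-slab around $V'$, deduce that $(\hat g|_\Pi)^{\vee}$ is supported in an $(n-m)$-dimensional ball of radius $O(r^{-1/2+\delta_m})$, and then invoke Lemma~\ref{local}. The only cosmetic difference is that the paper phrases the final step via a locally-constant convolution bound followed by H\"older, whereas you apply Lemma~\ref{local} directly; also note that your closing remark about the regime $\rho\le r^{1-c}$ is slightly off (here $r^{1/2}<\rho<r$), but when $\rho^{1/2+\delta_m}\gtrsim r^{1/2-\delta_m}$ the claimed inequality is trivial up to $r^{O(\delta_m)}$ factors, so no harm is done.
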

\begin{proof}
Note that $g$ is concentrated from $\mathbb{T}_{V,B,\tilde \theta}$,	from the above discussion, we have $g$ is supported in the $r^{-1/2+\delta_m}$ neighborhood of $V'$. Thus, $\big(\hat{g}|_{\Pi}\big)^{\vee}$ is supported in an $n-m$ dimensional $r^{-1/2+\delta_m}$ ball centered at ${\rm proj}_{\widetilde V}(\xi_V)$, by Lemma \ref{local}, we have 
\beqq
|\hat{g}|_{\Pi}|\lesssim |\hat{g}|_\Pi|\ast \eta_{r^{1/2-\delta_m}}.
\eeqq
Then, we may integrate $|\hat{g}|_{\Pi}|$ inside the ball $B_{\rho^{1/2+\delta_m}}^n(y)$ and use H\"older's inequality to obtain the desired results.
	\end{proof}
Before the proof Proposition \ref{pro:2}, we still  needs some additional inputs.

\begin{proposition}\cite{GOW}\label{proc}
	\begin{itemize}
		\item[1:] $\Phi(Z_0)$ is quantitatively transverse to $\widetilde V$ at every point $z\in \Phi(Z_0)\cap \Phi(CB)$.
		\item[2:] $\Phi^{-1}(\Pi)$ is an $n-1-{\rm dim}(V')$ dimensional transverse complete intersection in $\R^{n-1}$.
		\item[3:] $\Pi\cap N_{C\rho^{1/2+\delta_m}}(\Phi(Z_0)\cap \Phi(CB))$ can be covered by $\big(\frac{r^{1/2}}{\rho^{1/2}}\big)^{{\rm dim}Z_0-{\rm dim}V_0}$ many balls in $\Pi$ of radius $\rho^{1/2+\delta_m}$.
		\end{itemize}
	\end{proposition}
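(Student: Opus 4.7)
The plan is to establish each of the three assertions by exploiting the near-translation-invariance of the phase. Because $\phi$ is in reduced form, $D\Phi(x') = -\partial_{x'\xi}\phi(x',\tilde x_0^n,\xi_{\tilde\theta}) = -I_{n-1} + O(c_{\rm par})$, so $\Phi$ is a smooth bi-Lipschitz diffeomorphism onto its image that differs from $x'\mapsto -x'$ by a $C^{N_{\rm par}}$-small perturbation. In particular $\Phi(CB)$ has diameter $\sim r^{1/2+\delta}$, and $\Phi$ distorts tangent spaces and distances by only small multiplicative and additive amounts. All three claims will follow by transporting the corresponding affine/translation-invariant statements (as in \cite{GOW}) through this controlled diffeomorphism.

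For (1), I would compute tangent spaces pointwise. For any $z\in\Phi(Z_0)\cap\Phi(CB)$ with preimage $y=\Phi^{-1}(z)\in Z_0\cap CB$, one has $T_z\Phi(Z_0)=D\Phi(y)\cdot T_y Z_0$, which differs from $-T_yZ_0$ by an $O(c_{\rm par})$ rotation. Since $Z_0=(Z+b)\cap\{x_n=\tilde x_0^n\}$, $T_yZ_0\subseteq T_{y-b}Z\cap\{x_n=0\}$. The concentration of $h$ on tubes in $\mathbb{T}_{\tilde\theta,w}\cap\mathbb{T}_Z$ together with the tangency definition forces $T_{y-b}Z$ to lie within angle $r^{-1/2+\delta_m}$ of $V$, whence $T_yZ_0\subseteq\bar V+O(r^{-1/2+\delta_m})$. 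Combining this with the previously established quantitative transversality of $\bar V$ and $\widetilde V$ (equation \eqref{eq:122}) yields the desired transversality of $\Phi(Z_0)$ to $\widetilde V$ up to tolerable small errors, absorbed by the $r^{O(\delta_m)}$ loss.

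For (2), I would realize $\Pi$ as the common zero set of $\dim V'$ many affine functions $\ell_1,\dots,\ell_{\dim V'}$ on $\{x_n=\tilde x_0^n\}\simeq\R^{n-1}$ with linearly independent gradients, and let $Q_i:=\ell_i\circ\Phi$. Then $\Phi^{-1}(\Pi)=\{Q_1=0,\dots,Q_{\dim V'}=0\}$, and $\nabla Q_i=(D\Phi)^T\nabla\ell_i$ are linearly independent everywhere since $D\Phi$ is invertible, so $\Phi^{-1}(\Pi)$ is a smooth submanifold of codimension $\dim V'$, i.e.\ dimension $n-1-\dim V'$. To match the formal definition of a transverse complete intersection, the $Q_i$ are approximated on the bounded set $CB$ by polynomials of degree bounded in terms of $\phi$ and $N_{\rm par}$ using Taylor truncation; the bounds on $\partial^\alpha\partial^\beta\phi$ in the reduced form (estimate \eqref{eq:uni}) guarantee that the approximation can be made tight enough to preserve linear independence of gradients, yielding the transverse complete intersection structure in the sense used throughout the paper. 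For (3), part (1) gives that $\Phi(Z_0)\cap\Pi$ is, along the region $\Phi(CB)$, a smooth submanifold of dimension $\dim Z_0-\dim V_0$ obtained by transverse intersection (using $\dim\Pi=\dim\widetilde V=n-m$ and $\dim\Phi(Z_0)=\dim Z_0$). Since $\Phi(Z_0)\cap\Phi(CB)$ sits in a region of diameter $\lesssim r^{1/2+\delta_m}$ and has controlled degree (degree of $Z_0$ times the polynomial degree of the approximation to $\Phi$), a standard Wongkew/covering estimate for semialgebraic sets shows that the $C\rho^{1/2+\delta_m}$-neighborhood in $\Pi$ of $\Phi(Z_0)\cap\Phi(CB)$ is covered by $r^{O(\delta_m)}(r^{1/2}/\rho^{1/2})^{\dim Z_0-\dim V_0}$ balls in $\Pi$ of radius $\rho^{1/2+\delta_m}$.

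The main obstacle is the algebraic structure in (2): in the translation-invariant setting of \cite{GOW} the map $\Phi$ is affine and $\Phi^{-1}(\Pi)$ is literally an affine subspace, but here the error term $\mathcal E$ in the reduced form makes $\Phi$ merely smooth, so the ``transverse complete intersection" property must be recovered via a controlled polynomial approximation. The secondary technical point is ensuring that the $O(c_{\rm par})$ perturbation in $D\Phi$ used in (1) is genuinely dominated by the quantitative angle lower bound between $\bar V$ and $\widetilde V$; this is guaranteed by choosing $c_{\rm par}$ sufficiently small in the reduction (condition ${\bf C}_2$), so that all approximations in (1) and (3) only contribute to the harmless $r^{O(\delta_m)}$ factor.
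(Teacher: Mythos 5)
The paper never proves this proposition---it is cited from \cite{GOW}, where the map $\Phi$ is affine (in the translation-invariant Bochner--Riesz/fractional Schr\"odinger setting, $\Phi(x') = -x' - \text{const}$), so that all three statements are elementary linear algebra and counting. Your proof correctly identifies the genuine issue in transporting the statements to the present paper: here $\Phi(x') = -\partial_\xi\phi(x',\tilde{x}_0^n,\xi_{\tilde\theta})$ is a smooth but non-affine map, near-identity only up to the $O(c_{\rm par})$ error coming from $\mathcal{E}$. The authors acknowledge this difficulty in passing (``in our case, the associated set $\mathcal{S}$ may be a curved submanifold which requires more technical handling'') but offer no details, so you are filling a gap the paper leaves open rather than reproducing a proof it contains.

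Your argument for item (1) is sound: $D\Phi = -I_{n-1} + O(c_{\rm par})$ in reduced form, so tangent planes are rotated by an $O(c_{\rm par})$ amount, and the quantitative transversality of $\bar V$ and $\widetilde V$ from \eqref{eq:122} survives this perturbation once $c_{\rm par}$ is taken small relative to the angle lower bound. Your item (3) is also in the right spirit, combining (1) with a Wongkew-type covering bound.

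Item (2) is where a genuine gap remains, and it is exactly the one you flag. The set $\Phi^{-1}(\Pi) = \{Q_1 = \dots = Q_{\dim V'} = 0\}$ with $Q_i = \ell_i \circ \Phi$ is a smooth submanifold, but the transverse complete intersection definition in the paper requires the defining functions to be \emph{polynomials}. Replacing $Q_i$ by their Taylor truncations $P_i$ does give a transverse complete intersection of bounded degree whose gradients are linearly independent, but the resulting variety $\{P_1 = \dots = P_{\dim V'} = 0\}$ is \emph{not} equal to $\Phi^{-1}(\Pi)$---it is only a nearby set. The two sets can differ at scale controlled by the truncation error, and the covering bound in (3) is exactly where the algebraic degree of $\Phi^{-1}(\Pi)$ enters. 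To close the argument you would need to show that replacing $\Phi^{-1}(\Pi)$ (and consequently $\Phi(Z_0) \cap \Pi$) by its polynomial approximation changes the covering count in (3) only by an $r^{O(\delta_m)}$ factor, i.e.\ that the truncation error is $\ll \rho^{1/2+\delta_m}$ uniformly on $CB$ for an admissible degree of truncation. Given the bounds \eqref{eq:uni} on the derivatives of $\phi$ up to order $N_{\rm par}$ and the diameter $\lesssim r^{1/2+\delta}$ of $CB$, this is plausible, but it is a quantitative claim that must be stated and checked; as written, your proof asserts the Taylor approximation ``can be made tight enough'' without supplying this comparison of length scales, and that is the one step that does not yet follow from what you have written.
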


{\bf Proof of Proposition \ref{pro:2}:}

Since  the wave packets in $\mathbb{T}_{Z,B,\tilde \theta}$ are tangent to $Z$ in $B$, thus 
\beqq
\measuredangle(G(\theta), T_zZ)\lesssim r^{-1/2+\delta_m}
\eeqq
for every $z\in Z\cap 2B$ and $T_{\theta,v}\in \mathbb{T}_{Z,B,\tilde \theta}$. There is a subspace $V$ of minimal dimension and ${\rm dim}V\leq {\rm dim }Z$ such that for all $\theta$ making contribution to $\mathbb{T}_{Z,B,\tilde \theta}$, we have 
\beqq
\measuredangle(G(\theta),V)\lesssim r^{-1/2+\delta_m},
\eeqq
which indicates that $h$ is concentrated on wave packets from $T_{V,B,\tilde \theta}$. Therefore, by Lemma \ref{prob}, we have 
\beqq
\int_{\Pi \cap B_{\rho^{1/2+\delta_m}}^n(y)}|\hat{h}|^2 \lesssim r^{O(\delta_m)}\big(\frac{\rho^{1/2}}{r^{1/2}}\big)^{{\rm dim}(\widetilde{V})}\int_{\Pi}|\hat{h}|^2.
\eeqq
Finally, by Proposition \ref{proc}, we get
	\beq
\int |\hat{h}|^2\cdot  \chi_{N_{C\rho^{1/2+\delta_m}}(\Phi(Z_0)\cap \Phi(CB))}\lesssim r^{O(\delta_m)}\big(\frac{\rho}{r}\big)^{(n-m)/2}\int_{\Pi}|\hat{h}|^2.
\eeq
Integrate over the affine subspace $\Pi$ which is parallel to $\widetilde{V}$, we will obtain the desired results.

\section{Broad-norm estimate}\label{broad}
In this section, we assume the operator $T^\lambda$ satisfies the straight condition. In this setting,  the tubes introduced in \eqref{wave-1} is straight.  To prove Theorem \ref{theo}, we will use the broad-narrow analysis developed by Bourgain-Guth \cite{BG}, which deduces the linear estimates from the multilinear ones.  In \cite{Guth}, Guth observed that full power of the $k-$linear inequality could  be replaced by a certain weakened version of the multilinear estimate for the Fourier extension operators known as $k-$broad ``norm" estimates. Following the  approach developed  by Guth in \cite{Guth}, we shall divide  $T^\lambda f$
into narrow and broad parts in the frequency space, and one part is around a neighborhood of $(k-1)$-dimensional subspace,
another comes from its outside.
We estimate the contribution of the first part through the decoupling theorem and an induction on scales argument,  and then use the $k$-broad ``norm" estimates to handle the broad part. 

First, we shall introduce a notion of broad ``norm".  Let $V\subset \R^{n}$ be a $(k-1)$-dimensional subspace.  We denote by  $\measuredangle(G(\tau), V)$  the smallest angle between the non-zero vectors $v\in V$ and $v'\in G(\tau)$.
Define $$ f_{\tau}:=f\chi_\tau.$$
For each ball $B_{K^2}^{n}\subset B_{R}^{n}$, define
\beqq
\mu_{T^\lambda}(B_{K^2}^{n}):=\min \limits_{V_1,\ldots, V_L}\max\limits_{\tau \notin V_\ell }\Big( \int_{B_{K^2}^{n}} |T^\lambda f_\tau|^p d x\Big),
\eeqq
where $\tau \notin V_\ell$ means that for all $1\leq \ell \leq L$, ${\rm Ang}(G(\tau),V_{\ell})>K^{-1}$.

Let  $\{B_{K^2}^{n}\}$ be a collection of finitely overlapping balls which form a cover of $B_{R}^n$. We define the $k$-broad ``norm" by
\beqq
\big\|T^\lambda f\big\|_{{\rm BL}_{k,L}^p(B_{R}^n)}^p:=\sum_{B^{n}_{K^2}\subset B_{R}^n} \mu_{T^\lambda }(B^{n}_{K^2}).
\eeqq
We will establish the following broad norm estimate.
\begin{theorem}\label{theo5}
	Let  $\mathscr{T}^\lambda$ be defined with $\phi$ satisfying the conditions ${\rm H}_1,{\rm H}_2,{\rm H}_3,{\rm H}_4$ and 
	\begin{itemize}
		\item The eigenvalues of the Hessian $$\partial_{\xi\xi}\langle \partial_x\phi(x,\xi ),G(x,\xi_0)\rangle|_{\xi=\xi_0}$$ all fall into $[1/2,2]$ for $x\in X, \xi_0\in \Omega$.
		\item Let  $N_{\rm par}>0$ be a given large constant as above, 
		\beqq
		|\partial_x^\alpha \partial_\xi^\beta \phi(x,\xi)|\leq C_{\alpha,\beta},\quad |\alpha|,|\beta|\leq N_{\rm par}.
		\eeqq
	\end{itemize}
	 If $2\leq k\leq n-1$ and 
	\beq
	p\geq p_n(k):=2+\frac{6}{2(n-1)+(k-1)\prod_{i=k}^{n-1}\frac{2i}{2i+1}},
	\eeq
	then for every $\varepsilon>0$, there exits $L$ such that 
	\beq\label{eq-22}
	\big\|\mathscr{T}^\lambda f\big\|_{{\rm BL}_{k,L}^p(B^{n}_R(0))}\lesssim_{\varepsilon,L,K}R^\varepsilon \|f\|_{L^2}^{2/p}\|f\|_{L^\infty}^{1-2/p},
	\eeq
	for every $K\geq 1, 1\leq R\leq \lambda$. Furthermore, the implicit constant depends polynomially on $K$. 
\end{theorem}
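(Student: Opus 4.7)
The plan is to adapt the polynomial partitioning framework of Guth together with the refinements of Guth-Hickman-Iliopoulou \cite{GHI} and Gan-Oh-Wu \cite{GOW}, running a double induction on the radius $R$ and on the quantity $\|f\|_{L^\infty}/\|f\|_{L^2}^{2/p}$. To accommodate the fact that the straight condition is not preserved under diffeomorphisms in $x$, I would at each inductive step work within the class $\mathbf{\Phi}_{\mathrm{cs}}$ rather than restricting to phases already put in reduced form; every change of variables performed during the argument is only required to preserve the straight condition \emph{up to} a diffeomorphism in $x$, which is exactly what the definition of $\mathbf{\Phi}_{\mathrm{cs}}$ allows.

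First I would set up the broad-narrow decomposition at scale $K^2$ inside $B_R^n(0)$, partitioning $B_1^{n-1}(0)$ into caps $\tau$ of radius $K^{-1}$ and, on each ball $B_{K^2}^n$, separating the $k$-broad contribution from the narrow one (where the relevant $G(\tau)$'s cluster near a $(k-1)$-dimensional subspace). The narrow part is handled by decoupling at scale $K^{-1}$ together with the inductive hypothesis on smaller scales, and reduces to verifying a numerology inequality for the exponent $p_n(k)$. For the broad part, I would apply Guth's polynomial partitioning: find a nonzero polynomial $P$ of degree $D = D(\varepsilon)$ of $\le n$ variables whose zero set $Z(P)$ bisects $\|T^\lambda f\|^p_{\mathrm{BL}_{k,L}^p}$ equally across $\sim D^n$ open cells $O_i$. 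After passing to the wall neighborhood $W := N_{R^{1/2+\delta}}(Z(P))$, one enters either the \textbf{cellular} case (most cells dominate) or the \textbf{algebraic} case (the wall dominates).

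In the cellular case each cell $O_i$ meets $\lesssim D^{n-1}$ of the $\sim R/K^2$ many $K^2$-balls, and classical wave-packet analysis shows that only $\lesssim D^{n-1}\lambda^{-\frac{n-1}{2}}\#\mathbb{T}$ of the tubes visit any given cell. Applying the inductive hypothesis on each cell at the smaller scale $R/D^2$ and summing yields closure provided the exponent $p_n(k)$ satisfies the correct scaling. In the algebraic case one runs the Guth-Hickman-Iliopoulou iteration: inside $W$ perform a finer polynomial partitioning on a lower-dimensional transverse complete intersection, producing nested scales $R = \rho_{n-1} > \rho_{n-2} > \cdots > \rho_{k}$ and, at the dimension-$m$ stage, functions concentrated on wave packets tangent to an $m$-dimensional variety $Z_m$. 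This is precisely where Lemma 5.1 (transverse equidistribution) and Propositions 5.4, 5.7 established in Section \ref{tran} are invoked: they upgrade $L^2$ bounds by the decisive factor $(\rho/r)^{(n-m)/2}$, replacing the weaker $(\rho/r)^{(n-m)/4}$ available without the convexity assumption. Iterating and optimizing in the scales gives the exponent $p_n(k) = 2 + 6/\bigl(2(n-1) + (k-1)\prod_{i=k}^{n-1}\tfrac{2i}{2i+1}\bigr)$ from the convex case of \cite{GHI}.

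The hard step will be the algebraic-tangential case, and in particular ensuring that the transverse equidistribution lemma continues to apply after the repeated reductions. The difficulty is twofold: first, each rescaling in $\xi$ keeps the straight condition intact but the accompanying rescaling in $x$ only preserves it up to a diffeomorphism, so one has to verify that the quantitative estimates \eqref{eq:uni} and \eqref{eq:uni1} persist throughout the iteration. Second, because our Gauss map $G(\xi)$ is nonlinear, the set $\{\xi : G(\xi)\in V\}$ is a genuine curved submanifold rather than an affine subspace (as in the translation-invariant case of \cite{GOW}), so the transversality argument and the dimension reduction are carried out not on $V$ itself but on the tangent space $V' = T_{\tilde\xi}L$, and the quantitative bound in \eqref{eq:122} must be verified via the small perturbation hypothesis \eqref{eq:hass} on $\partial^2_{\xi\xi} h$. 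Once these geometric details are in place, the bookkeeping for the broad-norm exponent proceeds as in \cite{GHI,GOW}, which is why only the structure of the proof needs to be streamlined here.
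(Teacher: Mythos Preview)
Your overall plan---polynomial partitioning combined with the transverse equidistribution estimates of Section~\ref{tran}---agrees with the paper, which simply records that once the material in Sections~\ref{wav} and~\ref{tran} is in place the argument proceeds verbatim as in \cite{GOW}. However, you have conflated the proof of Theorem~\ref{theo5} with the passage from broad to linear estimates carried out in Section~\ref{fp}.

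The broad--narrow decomposition and the decoupling inequality (Lemma~\ref{decoupling}) play \emph{no role} in the proof of the $k$-broad estimate itself. The quantity $\|\mathscr{T}^\lambda f\|_{\mathrm{BL}^p_{k,L}}$ already discards the narrow contribution by definition: on each $B^n_{K^2}$ one takes the minimum over $(k-1)$-planes $V_1,\dots,V_L$ of the maximum over caps $\tau$ transverse to all $V_\ell$, so there is nothing left to ``separate''. The proof of Theorem~\ref{theo5} is a direct induction on $R$ and on the dimension $m$ of the tangent variety (not on any ratio of norms of $f$), applying polynomial partitioning to the broad norm via its finite subadditivity and triangle-type properties. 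Decoupling and the narrow part enter only afterward, in the proof of Proposition~\ref{prop reduction}, to upgrade \eqref{eq-22} to the linear estimate; the first paragraph of your proposal is really a sketch of that proposition, not of Theorem~\ref{theo5}.

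Your account of the cellular case is also garbled. The decisive fact is not that each cell meets $\lesssim D^{n-1}$ balls of radius $K^2$, nor the tube count you wrote, but that each \emph{tube} enters at most $D+1$ cells---here the straight condition $\mathrm{H}_4$ is exactly what makes the core curves genuine line segments, so a degree-$D$ polynomial vanishes at most $D$ times along each. This yields $\sum_i \|f_i\|_2^2 \lesssim D\|f\|_2^2$, while pigeonholing places at most $D^{-n}$ of the total broad norm in a typical cell, and the induction at radius $R/D$ closes. Your remarks on the genuine difficulties---tracking membership in $\mathbf{\Phi}_{\mathrm{cs}}$ through rescalings, and handling the curved submanifold $\{\xi : G(\xi)\in V\}$ via its tangent space $V'$---are correct and are precisely the content the paper isolates in Section~\ref{tran}.
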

By combining the material in Section \ref{wav},\ref{tran} and using the polynomial partitioning method,  we may obtain the proof of Theorem \eqref{theo5}.
At this point, there is no difference between our case and that in \cite{GOW}. Therefore, one may refer to \cite{GOW} for details.
 \section{Going from k-broad to linear estimates}\label{fp}
 \begin{proposition}{\label{prop reduction}}
 	Let $T^\lambda$ be defined with the reduced form and $\mathscr{T}^\lambda$ be defined as above. Suppose that for all $K\geqslant1,\varepsilon>0$, the operator $\mathscr{T}^\lambda$ obeys the $k$-broad inequality
 	\beq
 	\|\mathscr{T}^\lambda f\|_{{\rm BL}_{k,L}^p(B_R^n(0))}\lesssim_{K, \varepsilon,L}R^\varepsilon\|f\|_{L^2}^{\frac{2}{p}}\|f\|_{L^\infty}^{1-\frac{2}{p}},
 	\eeq
 	for some fixed $k,p,L$ and all $R\geq 1$. If
 	\beqq
 	2\frac{2n-k+2}{2n-k}< p\leqslant 2\frac{k-1}{k-2},
 	\eeqq
 	then 
 	\beqq
 	\|T^\lambda f\|_{L^p(B_R^n(0))}\lesssim_{\varepsilon}R^\varepsilon \|f\|_{L^2}^{\frac{2}{p}}\|f\|_{L^\infty}^{1-\frac{2}{p}}.
 	\eeqq
 \end{proposition}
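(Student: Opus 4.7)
The plan is to run the Bourgain--Guth broad/narrow reduction with induction on the radius $R$, using the hypothesis to handle the broad part at each scale. Fix a large constant $K\geq 1$ to be chosen at the end, cover $B_R^n(0)$ by finitely-overlapping balls $B_{K^2}^n$, and partition the frequency domain $B_1^{n-1}(0)$ into caps $\{\tau\}$ of diameter $K^{-1}$. On each $B_{K^2}^n$, the standard Bourgain--Guth $L^p$ dichotomy gives that either the contribution of $T^\lambda f$ is broad (not concentrated near any $(k-1)$-subspace $V_\ell$ in the Gauss image), or it is narrow, meaning the non-negligible caps $\tau$ have $G(\tau)$ within $K^{-1}$ of some $(k-1)$-dimensional subspace $V$. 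Summing over the balls $B_{K^2}^n$ and observing that $T^\lambda$ is a special case of $\mathscr{T}^\lambda$, the broad term is controlled by the hypothesized broad-norm bound, giving
\begin{equation*}
\|T^\lambda f\|_{L^p(B_R^n)}^p \lesssim_{\varepsilon,K,L} K^{O(1)} R^\varepsilon\|f\|_{L^2}^{2}\|f\|_{L^\infty}^{p-2} + K^{O(1)}\sum_V \Bigl\|\sum_{\tau\subset V}T^\lambda f_\tau\Bigr\|_{L^p(B_R^n)}^p.
\end{equation*}

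For each narrow summand, the caps $\tau$ with $G(\tau)$ lying in the $K^{-1}$-neighborhood of a $(k-1)$-plane $V$ fall, after inverting $G$ using H1 and H2, into a $K^{-1}$-neighborhood of a $(k-2)$-dimensional submanifold with non-degenerate curvature. Applying $\ell^2$-decoupling of Bourgain--Demeter for this lower-dimensional non-degenerate submanifold (lifted to the H\"ormander setting by freezing a base point inside $B_{K^2}^n$ and transferring via the wave-packet decomposition of Section~\ref{wav}) yields, provided $p\leq 2\tfrac{k-1}{k-2}$,
\begin{equation*}
\Bigl\|\sum_{\tau\subset V}T^\lambda f_\tau\Bigr\|_{L^p(B_R^n)}^p \lesssim_\varepsilon R^{O(\varepsilon)} K^{O(1)}\sum_{\tau\subset V}\|T^\lambda f_\tau\|_{L^p(B_R^n)}^p.
\end{equation*}
For each cap $\tau$ I would then apply the parabolic/Lorentz rescaling of Section~\ref{reduc} to reduce $T^\lambda f_\tau$ to a rescaled H\"ormander-type operator at scale $\lambda/K^2$ over a ball of radius $R/K^2$ acting on a rescaled input $\tilde f_\tau$, and invoke the inductive hypothesis $Q_p(\lambda/K^2, R/K^2) \lesssim (R/K^2)^\varepsilon$ there.

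Combining these estimates and unwinding the normalizations using the $L^2$-orthogonality of caps ($\sum_\tau\|f_\tau\|_{L^2}^2\lesssim\|f\|_{L^2}^2$) together with $\|f_\tau\|_{L^\infty}\lesssim\|f\|_{L^\infty}$ produces a recursion
\begin{equation*}
Q_p(\lambda,R) \leq C_\varepsilon K^{O(1)} R^{\varepsilon} + C_\varepsilon R^{O(\varepsilon)} K^{B(n,k,p)} Q_p(\lambda/K^2, R/K^2),
\end{equation*}
where the exponent $B(n,k,p)$ arises from counting caps ($\sim K^{n-1}$) together with the interpolation weights coming from $\|f\|_{L^2}^{2/p}\|f\|_{L^\infty}^{1-2/p}$. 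A direct computation shows $B(n,k,p)<0$ precisely when $p>2\tfrac{2n-k+2}{2n-k}$, the lower bound in the statement; choosing $K=K(\varepsilon)$ sufficiently large and iterating the recursion (with base case $R\lesssim 1$ handled by the trivial $L^2$--$L^\infty$ bound) then yields $Q_p(\lambda,R)\lesssim_\varepsilon R^\varepsilon$, which is the desired conclusion.

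The main obstacle will be the Lorentz-rescaling step, since arbitrary diffeomorphisms in the spatial variables destroy the straight condition H4 and the rescaled operator need not be of the reduced form in the strict sense. This is precisely why the enlarged class ${\bf \Phi}_{\rm cs}$ (H1--H4 modulo a diffeomorphism in $x$) was introduced and the broad-norm hypothesis was formulated for $\mathscr{T}^\lambda$ rather than $T^\lambda$; one has to check carefully that the rescaling preserves membership in ${\bf \Phi}_{\rm cs}$ and that the quantitative bounds ${\bf C_1}$ and ${\bf C_2}$ are maintained. A secondary technical point is the polynomial-in-$K$ dependence of the broad-norm bound in Theorem~\ref{theo5}, without which the recursion above would not close.
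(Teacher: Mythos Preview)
Your outline matches the paper's broad/narrow scheme: pass from $T^\lambda$ to $\mathscr{T}^\lambda$, control the broad part by hypothesis, decouple the narrow part along a $(k-2)$-dimensional submanifold (Lemma~\ref{decoupling}), then parabolically rescale each $f_\tau$ (Lemma~\ref{rescaling}) and feed into a recursion for $Q_p$. The structural ingredients and the final numerology are correct; in particular $B(n,k,p)=-e(k,p)$ with $e(k,p)=(k-2)(1-p/2)-2n+(n-1)p$, positive exactly for $p>2\tfrac{2n-k+2}{2n-k}$.

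The one step that does not close as written is the recursion. The variable-coefficient rescaling lemma here carries an $R^\delta$ loss, so the inequality actually reads
\[
Q_p(\lambda,R)^p\le C_K R^{p\varepsilon}+C\,R^{p\delta}K^{-e(k,p)+\delta'}\,Q_p(\lambda/K^2,R/K^2)^p.
\]
With $K$ fixed, iterating $J\sim\log R/\log K$ times accumulates a factor $\prod_{j<J}(R/K^{2j})^{p\delta}\sim R^{O(\delta\log R)}$, which is super-polynomial and swamps any power bound. The paper avoids iteration entirely: it uses $Q_p(\lambda/K^2,R/K^2)\le Q_p(\lambda,R)$ and takes $K=K_0R^{2\delta/e(k,p)}$ so that $R^{p\delta}K^{-e(k,p)/2}=K_0^{-e(k,p)/2}\le\tfrac12$, giving a one-shot self-bound $Q_p(\lambda,R)^p\le 2K^{O(1)}C(K,\varepsilon,L)R^{p\varepsilon}$. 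The polynomial-in-$K$ dependence of the broad constant (which you correctly flag) then yields $K^{O(1)}C(K,\varepsilon,L)\lesssim R^{O(\delta)}$, and choosing $\delta\ll\varepsilon$ finishes. A minor point: the narrow exponent comes from the precise decoupling factor $K^{(k-2)(p/2-1)}$ times the rescaling factor $K^{2n-(n-1)p}$, not from cap-counting; the $L^2$-orthogonality $\sum_\tau\|f_\tau\|_2^2\lesssim\|f\|_2^2$ contributes no $K$-power.
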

Define  $p_n$  as follows  
 \beqq
 p_n:=\min_{2\leq k\leq n-1} \max \Big\{2\frac{2n-k+2}{2n-k},2+\frac{6}{2(n-1)+(k-1)\prod_{i=k}^{n-1}\frac{2i}{2i+1}}\Big\}.
 \eeqq
 Therefore, as a consequence of Theorem \ref{theo5} and Proposition \ref{prop reduction}, Theorem \ref{theo} holds for all $p>p_n$.
 For a given dimension, we can find $k$ so that $p_n$ achieves the smallest value through an effort of calculation.
 However, we can not give  a compact formula for the explicit range of $p_n$ as mentioned in \cite{HZ} for general dimensions.

To prove Proposition \ref{prop reduction}, we also need the decoupling inequality.
\begin{lemma}[Decoupling inequality]{\label{decoupling}}
	Let $T^\lambda$ be a H\"ormander-type operator with the convex condition and $V\subset\mathbb{R}^{n}$ be an m-dimensional linear subspace, then for $2\leqslant p\leqslant 2m/(m-1)$ and $\delta>0$, we have
	\beqq
	\big\|\sum_{\tau\in V}T^\lambda g_\tau\big\|_{L^p(B_{K^2}^n)}\lesssim_\delta K^{(m-1)(1/2-1/p)+\delta}\left(\sum_{\tau\in V}\|T^\lambda g_\tau\|_{L^p(w_{B_{K^2}^n})}^p\right)^{1/p}.
	\eeqq
	Here, the sum over all caps $\tau$ for which $\measuredangle(G(\tau),V)\leqslant K^{-1}$.
\end{lemma}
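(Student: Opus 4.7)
The plan is to deduce this decoupling inequality for the H\"ormander-type operator $T^\lambda$ from the classical Bourgain--Demeter $\ell^2$-decoupling theorem for the $(m-1)$-dimensional paraboloid, after first reducing to the translation-invariant model on the ball $B^n_{K^2}$. Because $K^2\leq\lambda$ in all applications, on such a ball the phase $\phi^\lambda(x,\xi)=\langle x',\xi\rangle + x_n h(\xi)+\mathcal{E}^\lambda(x,\xi)$ from the reduced form has an error term $\mathcal{E}^\lambda$ that is effectively a smooth bounded multiplier (its derivatives in $\xi$ are $O(K^2/\lambda)$ on $B^n_{K^2}$, and it is quadratic in $x$). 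Thus, modulo harmless errors absorbed in the weight $w_{B^n_{K^2}}$, the operator $T^\lambda$ restricted to $B^n_{K^2}$ can be replaced by the extension operator associated with the graph of $h$, which by condition ${\rm C}_1$ is a small perturbation of the paraboloid.

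The next step is to understand the geometric meaning of the angular constraint $\measuredangle(G(\tau),V)\leq K^{-1}$. Since $G(\xi)=(-\partial_\xi h(\xi),1)/\sqrt{1+|\partial_\xi h(\xi)|^2}$ and $\partial^2_{\xi\xi}h$ is close to a positive definite matrix with eigenvalues in $[1/2,2]$, the condition $\measuredangle(G(\xi),V)\leq K^{-1}$ confines $\xi$ to a $CK^{-1}$-neighborhood of an $(m-1)$-dimensional affine submanifold $L\subset\Omega$ (cf.\ the computation already carried out in Section~\ref{tran}). The collection of caps $\tau$ of sidelength $K^{-1}$ contributing to the sum is therefore contained in such a $CK^{-1}$-neighborhood and has cardinality $\lesssim K^{m-1}$. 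After an affine change of coordinates the restriction of the graph of $h$ to $L$ becomes a genuine $(m-1)$-dimensional elliptic hypersurface inside an affine $m$-plane.

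Then I apply the Bourgain--Demeter $\ell^2$-decoupling theorem for the $(m-1)$-dimensional paraboloid at its critical exponent $p_c=\tfrac{2m}{m-1}$ to the translation-invariant model, to obtain
\begin{equation*}
\bigl\|\sum_{\tau}T^\lambda g_\tau\bigr\|_{L^{p_c}(B^n_{K^2})}\lesssim_\delta K^{\delta}\Bigl(\sum_{\tau}\|T^\lambda g_\tau\|_{L^{p_c}(w_{B^n_{K^2}})}^{2}\Bigr)^{1/2}.
\end{equation*}
The trivial scale of the frequency $\eta$ transverse to $L$ contributes only an $O(1)$ factor, since a single $K^{-1}$-cap in that direction covers the full transverse extent. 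For $2\leq p\leq p_c$ one combines this with H\"older's inequality on the $\ell^2$-sum, using that the number of caps is bounded by $CK^{m-1}$:
\begin{equation*}
\Bigl(\sum_{\tau}\|T^\lambda g_\tau\|^{2}\Bigr)^{1/2}\lesssim K^{(m-1)(1/2-1/p)}\Bigl(\sum_{\tau}\|T^\lambda g_\tau\|^{p}\Bigr)^{1/p},
\end{equation*}
and interpolates with the elementary $p=2$ decoupling (triangle inequality together with the $L^2$-orthogonality of Lemma~\ref{orth}), producing the stated inequality for the whole range $2\leq p\leq 2m/(m-1)$.

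The main obstacle is the first step: justifying rigorously that on $B^n_{K^2}$ the perturbation $\mathcal{E}^\lambda$ and the small deviation of $h$ from a model quadratic phase can be absorbed without affecting the Bourgain--Demeter bound. This requires tracking the wave packet localization associated with each $\tau$, controlling tails outside $B^n_{K^2}$ via the rapidly decaying weight $w_{B^n_{K^2}}$, and ensuring that the parabolic rescaling to a standard $(m-1)$-paraboloid is uniformly non-degenerate; the convex condition ${\rm C}_1$ and the uniform estimates \eqref{eq:uni} are precisely what make this reduction quantitative.
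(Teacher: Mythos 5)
Your overall route matches the paper's: on $B_{K^2}^n$ you reduce $T^\lambda$ to the translation-invariant extension operator for the graph of $h$, note that the angular constraint confines the relevant caps to a $CK^{-1}$-neighborhood of the $(m-1)$-dimensional slice $L=\{\xi:G(\xi)\in V\}$, apply the Bourgain--Demeter $\ell^2$-decoupling theorem on that $(m-1)$-dimensional convex hypersurface, and convert the $\ell^2$-sum to an $\ell^p$-sum by H\"older using that there are $\lesssim K^{m-1}$ contributing caps. The paper itself only gives this heuristic with pointers to \cite{BD,BHS,ILX}, so in spirit you are following the same argument.

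There is, however, a quantitative error in the first step that you should fix. You claim the error term $\mathcal{E}^\lambda$ and its $\xi$-derivatives are $O(K^2/\lambda)$ on $B_{K^2}^n$ and that $K^2\leq\lambda$ is enough. Since $\mathcal{E}$ is quadratic in $x$, one has $\mathcal{E}^\lambda(x,\xi)=\lambda\mathcal{E}(x/\lambda,\xi)=O(|x|^2/\lambda)$, so on $B^n_{K^2}$ it is $O(K^4/\lambda)$, \emph{not} $O(K^2/\lambda)$, and the same is true for all $\xi$-derivatives. For the deviation of the phase from the translation-invariant model to be $o(1)$ you therefore need $K^4\ll\lambda$, i.e.\ $K^2\lesssim\lambda^{1/2-\delta}$, which is exactly the restriction the paper flags when it explains this lemma (``if $K^2\leq\lambda^{1/2-\delta}$ \dots''). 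Allowing $K^2$ up to $\lambda$ leaves a phase perturbation of size up to $O(\lambda)$ near the boundary of $B_{K^2}^n$, and the reduction to the extension operator simply fails there. This does not affect the applications in Section~7, where $K$ is a small power of $R\leq\lambda$, but it should be stated as a hypothesis (or observed explicitly) in the proof; as written, the step ``absorb $\mathcal{E}^\lambda$ as a harmless error'' is not justified for $K$ near $\lambda^{1/2}$ or larger.
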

Heuristically, if $K^2 \leq \lambda ^{1/2-\delta}$ with $0<\delta<1/2$, $T^\lambda $ is essentially equivalent to the translation invariant case on $B_{K^2}^n$, the fact can be seen by expanding the phase using Taylor's formula. Then Lemma \ref{decoupling} can be obtained directly by using the  sharp $\ell^2$-decoupling theorem of  Bourgain-Demeter \cite{BD} and H\"older's inequality. For more details, One may refer to \cite{BHS,ILX}.
 
 \begin{lemma}\label{sds}
 	Let $\mathcal{D}$ is a maximal $R^{-1}$-separated discrete subset of $\Omega$, then
 	\beq\label{sds1}
 	\left\|\sum_{\xi_\theta\in\mathcal{D}}e^{2\pi i\phi^\lambda(\cdot,\xi_\theta)}F(\xi_\theta)\right\|_{L^p(B_R^n(0))}\lesssim Q_p(\lambda, R)R^{(n-1)/p^\prime} \|F\|_{l^2(\mathcal{D})}^{\frac{2}{p}}\|F\|_{l^\infty(\mathcal{D})}^{1-\frac{2}{p}}\eeq for all $F:\mathcal{D}\rightarrow\mathbb{C}$, where 
 	\beqq
 	\|F\|_{\ell^p(\mathcal{D})}:=\big(\sum_{\xi_\theta \in \mathcal{D}}|F(\xi_\theta)|^p\big)^{\frac{1}{p}},
 	\eeqq
 	for $1\leq p<\infty$ and $p=\infty$ with a usual modification. \end{lemma}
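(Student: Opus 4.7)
The plan is to build a smooth function $f:B^{n-1}_1(0)\to\mathbb{C}$ that ``thickens'' the discrete data $F$ into an input to which the definition of $Q_p(\lambda,R)$ can be applied, and then recover the left-hand side of \eqref{sds1} from $T^\lambda f$. Fix a bump $\rho\in C_c^\infty(B^{n-1}(0,c_0))$ with $c_0$ small enough that the translates $\{\xi_\theta+c_0R^{-1}B^{n-1}\}_{\xi_\theta\in\mathcal{D}}$ are pairwise disjoint, and (for later use) chosen so that $\hat\rho$ is bounded below on a suitably large ball; set
$$
f(\xi):=R^{n-1}\sum_{\xi_\theta\in\mathcal{D}}F(\xi_\theta)\,\rho\bigl(R(\xi-\xi_\theta)\bigr).
$$
Disjointness of supports immediately gives $\|f\|_{L^2}\sim R^{(n-1)/2}\|F\|_{\ell^2(\mathcal{D})}$ and $\|f\|_{L^\infty}\sim R^{n-1}\|F\|_{\ell^\infty(\mathcal{D})}$, hence $\|f\|_{L^2}^{2/p}\|f\|_{L^\infty}^{1-2/p}\sim R^{(n-1)/p'}\|F\|_{\ell^2}^{2/p}\|F\|_{\ell^\infty}^{1-2/p}$. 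Plugging into \eqref{eq:op} produces exactly the desired upper bound for $\|T^\lambda f\|_{L^p(B_R^n(0))}$.

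Next, I would change variables $\xi=\xi_\theta+R^{-1}u$ in the defining integral for $T^\lambda f$ and Taylor expand $\phi^\lambda$ in $\xi$ about each $\xi_\theta$. Writing the reduced form as $\phi^\lambda(x,\xi)=\langle x',\xi\rangle+x_nh(\xi)+\lambda\mathcal{E}(x/\lambda,\xi)$ and invoking $C_1$, $C_2$ together with the fact that $\mathcal{E}$ is quadratic in $(x,\xi)$, one derives on $B_R^n(0)$ the uniform bounds $|\nabla_\xi\phi^\lambda(x,\xi_\theta)|\lesssim R$ and $|\partial_{\xi\xi}^2\phi^\lambda(x,\xi)|\lesssim R$. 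Hence the quadratic Taylor remainder is $O(R^{-1})$ for $|u|\lesssim 1$, and
$$
T^\lambda f(x)=\sum_{\xi_\theta\in\mathcal{D}}F(\xi_\theta)\,e^{2\pi i\phi^\lambda(x,\xi_\theta)}\,M_\theta(x)+O\bigl(R^{-1}\|F\|_{\ell^\infty}\bigr),
$$
where $M_\theta(x):=a^\lambda(x,\xi_\theta)\,\hat\rho\bigl(-(2\pi R)^{-1}\nabla_\xi\phi^\lambda(x,\xi_\theta)\bigr)$. Because the argument of $\hat\rho$ lies in a fixed bounded set, the assumed lower bound on $\hat\rho$ forces $|M_\theta(x)|\gtrsim 1$ on the support of $a^\lambda$.

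The main obstacle is to upgrade the pointwise lower bound on $|M_\theta(x)|$ to the reverse $L^p$ comparison
$$\Bigl\|\sum_{\xi_\theta\in\mathcal{D}} F(\xi_\theta)\,e^{2\pi i\phi^\lambda(\cdot,\xi_\theta)}\Bigr\|_{L^p(B_R^n(0))}\lesssim \|T^\lambda f\|_{L^p(B_R^n(0))},$$
since $M_\theta(x)$ genuinely varies in both $x$ and $\theta$, so no term-by-term factorization is available. The standard remedy is to partition $B_R^n(0)$ into finitely overlapping sub-cubes on a scale where $x\mapsto M_\theta(x)$ is essentially constant simultaneously for every $\theta\in\mathcal{D}$ — possible because the straight condition and the smoothness of $a^\lambda$ furnish uniform-in-$\theta$ bounds on the $x$-derivatives of $M_\theta$ — then to carry out the comparison cube-by-cube and reassemble via $L^p$-superadditivity over finitely overlapping cubes. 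Combining this reverse comparison with the $Q_p$-based upper bound yields \eqref{sds1}.
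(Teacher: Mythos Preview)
Your construction of $f$ and the norm computations are fine, and the bound $\|T^\lambda f\|_{L^p(B_R)}\lesssim Q_p(\lambda,R)R^{(n-1)/p'}\|F\|_{\ell^2}^{2/p}\|F\|_{\ell^\infty}^{1-2/p}$ follows immediately. The gap is in the reverse comparison. Freezing $x$ on a small cube makes $M_\theta(x)\approx m_\theta$ constant in $x$, but $m_\theta$ still varies with $\theta$: since $\partial_{\xi\xi}^2\phi^\lambda=O(R)$ on $B_R^n(0)$, the argument $-(2\pi R)^{-1}\nabla_\xi\phi^\lambda(x,\xi_\theta)$ of $\hat\rho$ has $\theta$-gradient of order one, so $m_\theta$ genuinely ranges over a unit-length interval as $\xi_\theta$ runs through $\mathcal D$. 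You are then asking for
\[
\Bigl\|\sum_\theta F(\xi_\theta)e^{2\pi i\phi^\lambda(\cdot,\xi_\theta)}\Bigr\|_{L^p(Q)}\lesssim \Bigl\|\sum_\theta m_\theta F(\xi_\theta)e^{2\pi i\phi^\lambda(\cdot,\xi_\theta)}\Bigr\|_{L^p(Q)},
\]
and this is false in general: real positive weights $m_\theta\sim 1$ that depend on $\theta$ can introduce cancellation on the right that is absent on the left. No cube scale fixes this, because the obstruction is in $\theta$, not in $x$. There is also a secondary problem: your $O(R^{-1}\|F\|_{\ell^\infty})$ remainder is a per-term error; after summing over $|\mathcal D|\sim R^{n-1}$ frequencies the pointwise error is $O(R^{-1}\|F\|_{\ell^1})$, which in $L^p(B_R)$ is not absorbed by the target bound in the relevant range $p<n$.

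The paper (following Lemma 11.8 of \cite{GHI}) runs the comparison the other way and never has to invert an amplitude. One writes $e^{2\pi i\phi^\lambda(x,\xi_\theta)}=(\int\eta_\theta)^{-1}\int e^{2\pi i\phi^\lambda(x,\xi)}e^{-2\pi i(\phi^\lambda(x,\xi)-\phi^\lambda(x,\xi_\theta))}\eta_\theta(\xi)\,d\xi$ with $\eta_\theta$ a bump at scale $R^{-1}$; on $\mathrm{supp}\,\eta_\theta$ the second exponential has bounded phase and (after rescaling $\xi\mapsto\xi_\theta+R^{-1}u$) bounded $C^N$ norms, so its Fourier coefficients decay rapidly. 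This yields the pointwise bound
\[
\Bigl|\sum_{\xi_\theta\in\mathcal D} e^{2\pi i\phi^\lambda(x,\xi_\theta)}F(\xi_\theta)\Bigr|\lesssim R^{n-1}\sum_{k}(1+|k|)^{-(n+1)}|T^\lambda f_k(x)|,
\]
with $f_k=\sum_\theta F(\xi_\theta)c_{k,\theta}\eta_\theta$ and $\|c_{k,\theta}\|_\infty\le 1$. Applying the definition of $Q_p$ to each $f_k$ (whose $L^2$ and $L^\infty$ norms are controlled by disjointness of the $\eta_\theta$) and summing in $k$ gives the lemma directly.
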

 \begin{proof}
 	Here our proof is essentially the same as that of Lemma 11.8  in \cite{GHI}. Let $\eta$ be a bump smooth function on $\mathbb{R}^{n-1}$, which is  supported on $B_2^{n-1}(0)$ and equals to $1$ on $B_1^{n-1}(0)$. For each $\xi_\theta\in\mathcal{D}$, we set $\eta_{\theta}(\xi):=\eta(10R(\xi-\xi_\theta)).$
 	Then as in Lemma 11.8 of \cite{GHI}, we have
 	\beq\label{sds2}
 	\left| \sum_{\xi_\theta\in\mathcal{D}}e^{2\pi i\phi^\lambda(\cdot;\xi_\theta)}F(\xi_\theta)\right|\lesssim R^{n-1}\sum_{k\in\mathbb{Z}^n}(1+|k|)^{-(n+1)}|T^\lambda f_k(x)|,\eeq
 	where $T^\lambda$ is defined with the reduced form and
 	\beqq f_k(\xi ):=\sum_{\xi_\theta\in\mathcal{D}}F(\xi_\theta)c_{k,\theta}(\xi)\eta_\theta(\xi)\eeqq
 	with  $\|c_{k,\theta}(\xi)\|_\infty\leq1$. By the definition of $Q_p(\lambda, R)$ and \eqref{sds2}, 
 	\beqq
 	\left\|\sum_{\xi_\theta\in\mathcal{D}}e^{2\pi i\phi^\lambda(\cdot;\xi_\theta)}F(\xi_\theta)\right\|_{L^p(B_R^n(0))}\lesssim Q_p(\lambda, R)R^{n-1}\sum_{k\in\mathbb{Z}^n}(1+|k|)^{-(n+1)}\|f_k\|_{L^2(B^{n-1}_2)}^{\frac{2}{p}}\|f_k\|_{L^\infty(B^{n-1}_2)}^{1-\frac{2}{p}}\eeqq
 	The support of $\eta_\theta$ are pairwise disjoint, for any $q>0$, we have
 	\beqq
 	\|f_k\|_{L^q(B^{n-1}_2)}\lesssim R^{-(n-1)/q} \|F\|_{l^q(\mathcal{D})}.\eeqq
 	Thus we get
 	\begin{align*}
 		\left\|\sum_{\xi_\theta\in\mathcal{D}}e^{2\pi i\phi^\lambda(\cdot;\xi_\theta)}F(\xi_\theta)\right\|_{L^p(B_R^n(0))}&\lesssim Q_p(\lambda,R)R^{n-1}\sum_{k\in\mathbb{Z}^n}(1+|k|)^{-(n+1)}R^{-(n-1)/p} \|F\|_{l^2(\mathcal{D})}^{\frac{2}{p}}\|F\|_{l^\infty(\mathcal{D})}^{1-\frac{2}{p}}\\
 		&\lesssim Q_p(\lambda, R)R^{(n-1)/p^\prime} \|F\|_{l^2(\mathcal{D})}^{\frac{2}{p}}\|F\|_{l^\infty(\mathcal{D})}^{1-\frac{2}{p}}.
 	\end{align*}
 \end{proof}
 
 \begin{lemma}{\label{rescaling}}{(Parabolic rescaling)}
 	Let $1\leqslant R\leqslant \lambda$, and $f$ supported in  a ball of radius $K^{-1}$, where $1\leqslant K\leqslant R$. Then for all $p\geqslant2$ and $\delta>0$, we have
 	\beqq
 	\|T^\lambda f\|_{L^p(B_R^n(0))}\lesssim_\delta Q_p\Big(\frac{\lambda}{K^2},\frac{R}{K^2}\Big)R^\delta K^{2n/p-(n-1)}\|f\|_{L^2(B^{n-1}_1)}^{\frac{2}{p}}\|f\|_{L^\infty(B^{n-1}_1)}^{1-\frac{2}{p}}.
 	\eeqq
 \end{lemma}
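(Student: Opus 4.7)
The strategy is a standard parabolic rescaling: the hypothesis that $f$ is supported in a $K^{-1}$-cap lets us re-express $T^\lambda f$ as a modulated, rescaled copy of a $T^{\lambda/K^2}$-operator in reduced form applied to a rescaled function $\tilde f$ on $B_1^{n-1}$, after which the definition of $Q_p(\lambda/K^2, R/K^2)$ can be applied. First I will perform the frequency rescaling $\xi = \xi_0 + K^{-1}\tilde\xi$, where $\xi_0$ is the center of the supporting cap, and set $\tilde f(\tilde\xi) := f(\xi_0 + K^{-1}\tilde\xi)$, so that $\|\tilde f\|_{L^q} = K^{(n-1)/q}\|f\|_{L^q}$. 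Taylor-expanding the reduced-form phase in $\tilde\xi$ and peeling off the $\tilde\xi$-independent terms as a unit-modulus modulation in $x$ leaves a residual phase whose quadratic-in-$\tilde\xi$ piece carries the coefficient $K^{-2}x_n\tilde h(\tilde\xi)$ with $\tilde h(\tilde\xi) := \tfrac12\langle \partial^2_{\xi\xi}h(\xi_0)\tilde\xi,\tilde\xi\rangle$.

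Next I will perform the spatial change of variables
\beqq
\tilde x' := K^{-1}\bigl(x' + x_n\partial_\xi h(\xi_0) + \lambda\partial_\xi\mathcal{E}(x/\lambda,\xi_0)\bigr), \qquad \tilde x_n := K^{-2}x_n,
\eeqq
which collapses the residual phase into the reduced form $\langle \tilde x', \tilde\xi\rangle + \tilde x_n\,\tilde h(\tilde\xi) + \tilde{\mathcal{E}}^{\lambda/K^2}(\tilde x,\tilde\xi)$, with $\tilde{\mathcal E}$ inheriting the smallness of $\mathcal{E}$ by an extra factor of $K^{-2}$ on the quadratic part. The implicit function theorem guarantees this is an affine bi-Lipschitz diffeomorphism with Jacobian $K^{n+1}$. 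One must then verify that $\mathbf{C}_1$ and $\mathbf{C}_2$ continue to hold at the rescaled scale and, crucially, that the straight condition is preserved; this is precisely why the class $\mathbf{\Phi}_{\mathrm{cs}}$ was designed in Section \ref{reduc}, since the Gauss map of the rescaled phase is a fixed reparametrization of the original $G(\xi)$ and hence inherits its $x$-independence. The outcome is the pointwise identity $|T^\lambda f(x)| = K^{-(n-1)}|T^{\lambda/K^2}\tilde f(\tilde x(x))|$ for a genuinely reduced-form $T^{\lambda/K^2}$.

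To finish, I will cover the image of $B_R^n(0)$ under $x\mapsto\tilde x$---an anisotropic slab of dimensions $\sim R/K$ in the transverse directions and $\sim R/K^2$ in the $\tilde x_n$-direction---by $\lesssim K^{n-1}$ finitely-overlapping balls $\{B_{R/K^2}^n(\tilde y_k)\}$, invoke the definition \eqref{eq:op} of $Q_p(\lambda/K^2, R/K^2)$ on each ball, and sum the $p$-th powers. Combining the local bounds with the Jacobian factor $dx = K^{n+1}d\tilde x$ and the frequency scaling identities for $\|\tilde f\|_{L^2}$ and $\|\tilde f\|_{L^\infty}$ produces the claimed factor $K^{2n/p-(n-1)}$; any residual $K^{\delta}$-loss (e.g.\ from invoking Lemma \ref{decoupling} to exploit a finer subdivision of the cap and win back the gap from the slab-vs-ball covering) is absorbed into $R^{\delta}$, using $K\leq R$.

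The main obstacle is the second step: certifying that the affine $x$-change of variables, which is only implicitly defined because $\mathcal{E}$ depends on $x$, produces a phase that still satisfies $\mathbf{C}_1$, $\mathbf{C}_2$, and, most importantly, the straight condition, with constants uniform in $K$, $\lambda$, and $\xi_0$. This is exactly the structural stability that distinguishes $\mathbf{\Phi}_{\mathrm{cs}}$ from more general diffeomorphic perturbations of reduced phases, and is what allows the inductive machinery behind Theorem \ref{theo} to close up under parabolic rescaling.
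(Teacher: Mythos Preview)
Your first two steps---the frequency rescaling $\xi=\xi_0+K^{-1}\tilde\xi$ and the parabolic change of spatial variables sending $B_R^n(0)$ to an anisotropic region of dimensions $\sim R/K$ in the transverse directions and $\sim R/K^2$ in $\tilde x_n$---match the paper's reduction exactly, and your bookkeeping of the Jacobian $K^{n+1}$ and the amplitude factor $K^{-(n-1)}$ is correct. Incidentally, you do not need to worry about the straight condition here: $Q_p$ is defined as a supremum over operators in \emph{reduced form}, and Section~\ref{reduc} already shows that this class is closed under the rescaling you describe.

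The gap is in your third step. Covering the slab by $\sim K^{n-1}$ balls of radius $R/K^2$ and applying the definition of $Q_p(\lambda/K^2,R/K^2)$ on each one gives
\[
\|T^\lambda f\|_{L^p(B_R)}^p \lesssim K^{n+1-(n-1)p}\cdot K^{n-1}\cdot Q_p\big(\tfrac{\lambda}{K^2},\tfrac{R}{K^2}\big)^p\,\|\tilde f\|_{L^2}^{2}\|\tilde f\|_{L^\infty}^{p-2},
\]
and after inserting $\|\tilde f\|_{L^2}^2=K^{n-1}\|f\|_{L^2}^2$ you end up with $K^{(3n-1)/p-(n-1)}$ rather than the claimed $K^{2n/p-(n-1)}$. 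The discrepancy is $K^{(n-1)/p}$, a genuine polynomial loss, not a $K^\delta$ that can be absorbed into $R^\delta$. The loss comes from the fact that the \emph{same} $\tilde f$ appears on every translated ball, so there is no orthogonality to exploit when you sum. Your gesture toward Lemma~\ref{decoupling} does not repair this: decoupling separates caps in frequency, not translates in physical space.

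The paper closes this gap by a different mechanism. After the rescaling, it must control $\|\widetilde T^{\lambda/K^2}\tilde f\|_{L^p}$ over the elongated ellipse $\tilde D_R$. It decomposes $\tilde f$ into caps $\theta$ of the \emph{finest} scale $(R/K^2)^{-1}$, writes $\widetilde T\tilde f=\sum_\theta e^{2\pi i\tilde\phi(\cdot,\xi_\theta)}\widetilde T_\theta\tilde f_\theta$ with the demodulated pieces $\widetilde T_\theta\tilde f_\theta$ locally constant at scale $(R/K^2)^{1-\delta}$, and then on each $R/K^2$-ball applies the discrete exponential-sum estimate of Lemma~\ref{sds} to the sequence $\theta\mapsto \widetilde T_\theta\tilde f_\theta(y)$. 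The resulting bound involves $\|\widetilde T_\theta\tilde f_\theta(y)\|_{\ell^2(\theta)}$, and the H\"ormander $L^2$ estimate $\|\widetilde T_\theta\tilde f_\theta\|_{L^2}\lesssim (R/K^2)^{1/2}\|\tilde f_\theta\|_{L^2}$ together with $\sum_\theta\|\tilde f_\theta\|_{L^2}^2\lesssim\|\tilde f\|_{L^2}^2$ recovers exactly the missing $K^{(n-1)/p}$. This local-constancy-plus-Lemma~\ref{sds} argument is the substantive content of the lemma, and it is absent from your proposal.
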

 \begin{proof}
 	Without loss of generality, we may assume the ball to be $B_{K^{-1}}^{n-1}(\bar \xi)$.
 	Doing the same argument as in Section \ref{reduc}, we obtain
 	\beqq
 	\|T^\lambda f\|_{L^p(B_R^n(0))}\lesssim_\delta K^{(n+1)/p}\|\widetilde{T}^{\lambda/K^2}\tilde{f}\|_{L^p(\tilde{D}_R)}
 	\eeqq
 	where $\widetilde{T}^{\lambda/K^2}$ is defined with phase $\tilde{\phi}$ as in \eqref{eq:213} and $\tilde{D}_R$ is an ellipse with principle axes parallel to the coordinate axes and dimensions $O(R/K)\times...\times O(R/K) \times O(R/K^2)$
 	and $\tilde{f}(\xi):=K^{-(n-1)}f(\bar{\xi}+K^{-1}\xi)$, note that for each $q>0$,
 	\beqq
 	\|\widetilde{f}\|_{L^q}\lesssim K^{-(n-1)+(n-1)/q}\|f\|_{L^q}.\eeqq
 	Then it suffice to show that
 	\beqq \|\widetilde{T}^{\lambda/K^2}\tilde{f}\|_{L^p(\tilde{D}_R)}\lesssim_\delta Q_p\Big(\frac{\lambda}{K^2},\frac{R}{K^2}\Big)R^\delta \|\tilde{f}\|_{L^2(B^{n-1}_1)}^{\frac{2}{p}}\|\tilde{f}\|_{L^\infty(B^{n-1}_1)}^{1-\frac{2}{p}}.\eeqq
 	Since the phase $\tilde{\phi}$ is also of reduced form, to ease notations, we just need to show 
 	\beqq \|T^{\lambda}f\|_{L^p(D_R)}\lesssim_\delta Q_p(\lambda, R)R^\delta \|f\|_{L^2(B^{n-1}_1)}^{\frac{2}{p}}\|f\|_{L^\infty(B^{n-1}_1)}^{1-\frac{2}{p}}.\eeqq
 	for all $1\ll R\leq R^\prime\leq \lambda$ and $\delta>0$, where
 	$$D_R:=\left\{x\in\mathbb{R}^n:\left(\frac{|x^\prime|}{R^\prime}\right)^2+\left(\frac{|x_n|}{R}\right)^2\leq1\right\}$$
 	is an ellipse and $T^\lambda$ is an operator with the reduced form.  Choose a collection of essentially disjoint $R^{-1}$-caps $\theta$  covers $B^{n-1}$, denote the center of $\theta$ by $\xi_\theta$ and  decompose $f$ as $f=\sum_\theta f_\theta$. Set
 	\beqq
 	T^\lambda_\theta f(x):=e^{-2\pi i\phi^\lambda(x,\xi_\theta)}T^\lambda(x),\eeqq
 	hence we have
 	\beqq
 	T^\lambda f(x)=\sum_\theta e^{-2\pi i\phi^\lambda(x,\xi_\theta)}T^\lambda_\theta f_\theta(x).\eeqq
 	Fix $\delta>0$ to be sufficiently small for the purpose of the forthcoming argument. 
 	We may also write
 	\beqq
 	T^\lambda_\theta f_\theta(x)=T^\lambda_\theta f_\theta*\eta_{R^{1-\delta}}(x)+{\rm RapDec}(R)\|f\|_{L^2(B^{n-1})}\eeqq
 	for some choice of smooth, rapidly decreasing function $\eta$ such that $|\eta|$ admits a smooth rapidly decreasing majorant $\zeta:\mathbb{R}^n\rightarrow[0,+\infty)$ which is locally constant at scale $1$. In particular, it follows that
 	\beq\label{zeta}
 	\zeta_{R^{1-\delta}}(x)\lesssim R^\delta \zeta_{R^{1-\delta}}(x) \quad \text{if}\quad |x-y|\lesssim R.\eeq
 	
 	Cover $D_R$ by finitely-overlapping $R$-balls, and let $B_R^n$ be some member of this cover with the center denoted by $\bar{x}$ , by the above observation, for $z\in B_R^n(0)$, we have
 	\beqq
 	|T^\lambda f(\tilde{x}+z)|\lesssim R^\delta\int_{\mathbb{R}^n}\left|\sum_{\theta} e^{2\pi i\phi^\lambda(\bar{x}+z,\xi_\theta)}T^\lambda_\theta f_\theta(y)\right| \zeta_{R^{1-\delta}}(\bar{x}-y)dy.\eeqq
 	By taking the $L^p$-norm in $z$ and modifying  the proof of Lemma \ref{sds} for the phase $\phi^\lambda(\bar{x}+\cdot,\xi_\theta)$, we have
 	\begin{align*}
 		\|T^\lambda f\|_{L^p(B_R^n(0))}& \lesssim  R^\delta\int_{\mathbb{R}^n}\left\|\sum_{\theta} e^{2\pi i\phi^\lambda(\bar{x}+z,\xi_\theta)}T^\lambda_\theta f_\theta(y)\right\|_{L^p(B_R^n(0))} \zeta_{R^{1-\delta}}(\bar{x}-y)dy \\
 		& \lesssim Q_p(\lambda,R) R^{(n-1)/p^\prime} R^\delta\int_{\mathbb{R}^n}\|T^\lambda_\theta f_\theta(y)\|_{l^2(\theta)}^{2/p}\|T^\lambda_\theta f_\theta(y)\|_{l^\infty(\theta)}^{1-2/p}\zeta_{R^{1-\delta}}(\bar{x}-y)dy,
 	\end{align*}
 where we use $\|a_{\theta}\|_{\ell^p(\theta)}$ to denote  $\big(\sum\limits_{\theta}|a_\theta|^p\big)^{1/p}$.
 
 	By property \eqref{zeta}, for $z\in B_R^n(0)$
 	\begin{align*}
 		&\int_{\mathbb{R}^n}\|T^\lambda_\theta f_\theta(y)\|_{l^2(\theta)}^{2/p}\|T^\lambda_\theta f_\theta(y)\|_{l^\infty(\theta)}^{1-2/p}\zeta_{R^{1-\delta}}(\bar{x}-y)dy\\
 		=&\int_{\mathbb{R}^n}\|T^\lambda_\theta f_\theta(\bar{x}+z-y)\|_{l^2(\theta)}^{2/p}\|T^\lambda_\theta f_\theta(\bar{x}+z-y)\|_{l^\infty(\theta)}^{1-2/p}\zeta_{R^{1-\delta}}(y-z)dy \\
 		\lesssim &R^{O(\delta)}\int_{\mathbb{R}^n}\|T^\lambda_\theta f_\theta(\bar{x}+z-y)\|_{l^2(\theta)}^{2/p}\|T^\lambda_\theta f_\theta(\bar{x}+z-y)\|_{l^\infty(\theta)}^{1-2/p}\zeta_{R^{1-\delta}}(y)dy\\
 		\lesssim& R^{O(\delta)}\left(\int_{\mathbb{R}^n}\|T^\lambda_\theta f_\theta(\bar{x}+z-y)\|_{l^2(\theta)}^{2}\|T^\lambda_\theta f_\theta(\bar{x}+z-y)\|_{l^\infty(\theta)}^{p-2}\zeta_{R^{1-\delta}}(y)dy\right)^{1/p}
 	\end{align*}
 	Then we deduces that for all $z\in B_R^n(0)$
 	\beqq
 	\begin{aligned}
 	\quad \|T^\lambda f\|_{L^p(B_R^n(0))}&\lesssim Q_p(\lambda, R) R^{(n-1)/p^\prime}  R^{O(\delta)}\\ &\times \left(\int_{\mathbb{R}^n}\|T^\lambda_\theta f_\theta(\bar{x}+z-y)\|_{l^2(\theta)}^{2}\|T^\lambda_\theta f_\theta(\bar{x}+z-y)\|_{l^\infty(\theta)}^{p-2}\zeta_{R^{1-\delta}}(y)dy\right)^{1/p}
 	\end{aligned}\eeqq
 	By raising both sides of this estimate to the $p$th power, averaging in $z$ and summing over all balls $B_R^n(0)$ in the covering, it follows 
 	that $\|T^\lambda f\|_{L^p(D_R)}$ is dominated by
 	\beqq
 	Q_p(\lambda,R) R^{(n-1)/p^\prime-n/p} R^{O(\delta)}\left(\int_{\mathbb{R}^n}\sum_\theta\|T^\lambda_\theta f_\theta\|_{L^2(D_R-y)}^{2}\sup_\theta\|T^\lambda_\theta f_\theta\|_{L^\infty(D_R-y)}^{p-2}\zeta_{R^{1-\delta}}(y)dy\right)^{1/p}\eeqq
 	We have the trivial estimate
 	\beqq
 	\|T^\lambda_\theta f_\theta\|_{L^\infty(D_R-y)}\lesssim\|f_\theta\|_{L^1}\lesssim R^{-(n-1)}\|f_\theta\|_{L^\infty}\eeqq
 	and 
 	\beqq
 	\|T^\lambda_\theta f_\theta\|_{L^2(D_R-y)}\lesssim R^{1/2}\|f_\theta\|_{L^2}.\eeqq
 	Hence  $\|T^\lambda f\|_{L^p(D_R)}$ is dominated by $  Q_p(\lambda,R) R^{O(\delta)}\|f\|_{L^2}^{\frac{2}{p}}\|f\|_{L^\infty}^{1-\frac{2}{p}}.$
 \end{proof}

 {\bf \noindent Proof of Proposition \ref{prop reduction}:}
 Let $T^\lambda$ be defined with the reduced form. Then there exits a smooth function $p(x)$ such that 
 $\phi(p(x), \xi)$ satisfies the straight condition with  \eqref{eq:uni1} holding. For convenience, we denote $\bar \phi(x,\xi):=\phi(p(x),\xi)$ and $\mathscr{T}^\lambda$ be defined with $\bar \phi$.  Hence, we have 
 \beqq
 \|T^\lambda f\|_{L^p(B_R^n(0))}\lesssim \|\mathscr{T}^\lambda f\|_{L^p(B_{CR}^n(0))}.
 \eeqq
 For a given ball $B_{K^2}^n$, we chose a collection of $(k-1)$- subspaces $V_1, ..., V_L$ which 
 achieve the minimum under the definition of $k$- board ``norm". Then
 \beqq
 \int_{B_{K^2}^n}|\mathscr{T}^\lambda f|^p\lesssim K^{O(1)}\max_{\tau\notin V_\ell , 1\leqslant \ell \leqslant L} \int_{B_{K^2}^n}|\mathscr{T}^\lambda f_\tau|^p+\sum_{\ell=1}^L\int_{B_{K^2}^n}|\sum_{\tau\in V_\ell}\mathscr{T}^\lambda f_\tau|^p.
 \eeqq
 We can use the $k$-broad hypothesis to dominate the first term, indeed, let $\mathcal{B}_{K^2}$ be a collection of finitely overlapping
 balls of radius $K^2$ which cover $B_{CR}^n(0)$, then one has
 \begin{align*}
 	\int_{B_{CR}^n(0)}|\mathscr{T}^\lambda f|^p & \lesssim\ K^{O(1)}\|\mathscr{T}^\lambda f\|_{{\rm BL}_{k,L}^p(B_{CR}^n(0))}^p+\sum_{B_{K^2}^n \in\mathcal{B}_{K^2}}\sum_{\ell=1}^L\int_{B_{K^2}^n}|\sum_{\tau\in V_\ell}\mathscr{T}^\lambda f_\tau|^p\\&
 	\lesssim K^{O(1)}C(K,\varepsilon,L)R^{p\varepsilon}\|f\|_{L^p}^p +\sum_{B_{K^2}^n \in\mathcal{B}_{K^2}^n }\sum_{\ell=1}^L\int_{B_{K^2}^n}|\sum_{\tau\in V_\ell}\mathscr{T}^\lambda f_\tau|^p.
 \end{align*}
 By Lemma \ref{decoupling}, for any $\delta^\prime$, we have
 \beqq
 \int_{B_{K^2}^n}|\sum_{\tau\in V_\ell}\mathscr{T}^\lambda f_\tau|^p\lesssim_{\delta^\prime}K^{(k-2)(p/2-1)+\delta^\prime}\sum_{\tau\in V_\ell}\int_{\mathbb{R}^n}|\mathscr{T}^\lambda f_\tau|^pw_{B_{K^2}^n }
 \eeqq for each $1\leqslant \ell \leqslant L$.
 Since $w_{B_R^n(0)}=\sum_{B_{K^2}^n \in\mathcal{B}_{K^2}}w_{B_{K^2}^n }$, one has
 \beqq
 \sum_{B_{K^2}^n \in\mathcal{B}_{K^2}}\sum_{\ell=1}^L\int_{B_{K^2}^n }|\sum_{\tau\in V_\ell }\mathscr{T}^\lambda f_\tau|^p\lesssim_{\delta^\prime}K^{(k-2)(p/2-1)+\delta^\prime}\sum_\tau\int_{\mathbb{R}^n}|\mathscr{T}^\lambda f_\tau|^pw_{B_{R}^n(0)}.
 \eeqq
 For each $\tau$, we take the same approach as in Section \ref{reduc} which obtains the reduced form from a general phase. To ease the notations, under the new coordinates, we use  $T^\lambda f_\tau$ to denote the new operator  which belongs to the reduced form and the new function.  Therefore, 
 \beqq
 \int_{\mathbb{R}^n}|\mathscr{T}^\lambda f_\tau|^pw_{B_{R}^n(0)}\lesssim \int_{\mathbb{R}^n} |T^\lambda f_\tau|^pw_{B_{CR^{}}^n(0)}.
 \eeqq
 Note that $w_{B_{CR}^n(0)}$ rapidly decay outside $B_{2CR}^n(0)$, we get
 \beqq
 \sum_{B_{K^2}^n \in\mathcal{B}_{K^2}}\sum_{\ell=1}^L \int_{B_{K^2}^n }|\sum_{\tau\in V_\ell }T^\lambda f_\tau|^p\lesssim_{\delta^\prime}K^{(k-2)(p/2-1)+\delta^\prime}\sum_\tau\int_{B_{2CR}^n (0)}|T^\lambda f_\tau|^p.
 \eeqq
 Let $\delta>0$ be a small number to be determined later. By a finitely-overlapping decomposition and translation,  from Lemma \ref{rescaling},  we obtain
 \beqq
 \int_{B_{2CR}^n(0)}|T^\lambda f_\tau|^p\lesssim Q_p\Big(\frac{\lambda}{K^2}, \frac{R}{K^2}\Big)R^\delta K^{2n-(n-1)p} \|f_\tau\|_{L^2}^{2}\|f_\tau\|_{L^\infty}^{p-2}.\eeqq
 Let 
 \beqq e(k,p):=(k-2)(1-\frac{1}{2}p)-2n+(n-1)p.
 \eeqq
 Recall
 \beqq
 \sum_{\tau}\|f_\tau\|_{L^2}^2\lesssim\|f\|_{L^2}^2,
 \eeqq
 therefore, we have
 \beqq
 \sum_{B_{K^2}^n\in\mathcal{B}_{K^2}^n}\sum_{\ell=1}^L\int_{B_{K^2}}|\sum_{\tau\in V_\ell}T^\lambda f_\tau|^p\lesssim_{\delta,\delta^\prime} Q_p\Big(\frac{\lambda}{K^2}, \frac{R}{K^2}\Big)^pR^\delta K^{-e(k,p)+\delta^\prime}\|f\|_{L^2}^{2}\|f\|_{L^\infty}^{p-2}.
 \eeqq
 Combining above estimates, we get
 \beqq
 \int_{B_R^n(0)}|T^\lambda f|^p\leqslant(K^{O(1)}C(K,\varepsilon,L)R^{p\varepsilon}+C_{\delta,\delta^\prime} Q_p\Big(\frac{\lambda}{K^2}, \frac{R}{K^2}\Big)^pR^\delta K^{-e(k,p)+\delta^\prime}\|f\|_{L^2}^{2}\|f\|_{L^\infty}^{p-2}.
 \eeqq
 Then by the definition of $Q_p(\lambda, R)$,
 \beqq
 Q_p(\lambda, R)^p\leqslant K^{O(1)}C(K,\varepsilon,L)R^{p\varepsilon}+C_{\delta,\delta^\prime} Q_p(\lambda, R)^pR^\delta K^{-e(k,p)+\delta^\prime}.
 \eeqq
 When $p>2\frac{2n-k+2}{2n-k}$, $e(k,p)>0$, we choose $\delta^\prime=\frac{1}{2}e(k,p),K=K_0R^{2\delta/e(k,p)}$ where $K_0$ is a large constant depending on $\varepsilon,\delta,p$ and $n$ such  that
 \beqq
 Q_p(R)^p\leqslant K^{O(1)}C(K,\varepsilon,L)R^{p\varepsilon}+\frac{1}{2}Q_p(R)^p.
 \eeqq
 Recall that $C(K,\varepsilon,L)$ depends polynomially  on $K$, then we will complete the proof by choosing  suitable $0<\delta\ll\varepsilon$ such that $Q_p(R)\lesssim_\varepsilon R^\varepsilon.$

\bibliographystyle{amsplain}

\begin{thebibliography}{99}	
	\bibitem{BHS}
	D. Beltran, J. Hickman, C. Sogge. \newblock  Variable coefficient Wolff-type inequalities and sharp local smoothing estimates for wave equations on manifolds[J]. Analysis \& PDE, 2020, 13(2): 403-433.
	\bibitem{Bourgain1}
	J. Bourgain. $L^p$-estimates for oscillatory integrals in several variables. Geometric and Functional Analysis, 1991, 1(4): 321-374.
	\bibitem{BD}
	J. Bourgain, C. Demeter.  \newblock The proof of the $\ell^2$ decoupling conjecture. Annals of mathematics, 2015, 182: 351-389.
	
	\bibitem{BG}
	J. Bourgain, L. Guth. Bounds on oscillatory integral operators based on multilinear estimates. Geometric and Functional Analysis, 2011,  21(6): 1239-1295.
	\bibitem{DGL}X. Du, L. Guth, X. Li.  A sharp Schr\"odinger maximal estimate in $\R^2$. Annals of Mathematics, 2017, 186(2): 607-640.
	\bibitem{FS}
	C. Feffermann, E. Stein. \newblock $H^p$ spaces of several variables. Acta Mathematica, 1972, 129: 167-193.
	\bibitem{GOW}S. Gan, C. Oh, S. Wu.  A note on local smoothing estimates for fractional Schr\"odinger equations. arXiv preprint arXiv:2109.05401. 
	\bibitem{GLMX} C. Gao, B. Liu, C. Miao, Y. Xi . Improved local smoothing estimate for the wave equation in higher dimensions. arXiv preprint arXiv:2108.06870. 
	\bibitem{GMZ}
	C. Gao, C. Miao, J. Zheng. \newblock Improved local smoothing estimate for the fractional Schr\"odinger operator. \newblock Arxiv preprint.
	\bibitem{Guti} S. Gutiérrez. \newblock Non-trivial $L^q$ solutions to the Ginzburg-Landau equation. Mathematische Annalen, 2004, 328(1): 1-25.
	\bibitem{GOW}S. Guo, C. Oh, H. Wang, S. Wu, R. Zhang. \newblock The Bochner-Riesz problem: an old approach revisited. arXiv preprint arXiv:2104.11188.
	\bibitem{GS} M. Goldberg, W. Schlag. \newblock A limiting absorption principle for the three-dimensional Schr\"odinger equation with $L^p$ potentials[J]. International Mathematics Research Notices, 2004, 75: 4049-4071.
	\bibitem{Guth} L. Guth. \newblock A restriction estimate using polynomial partitioning[J]. Journal of the American Mathematical Society, 2016, 29(2): 371-413.
	\bibitem{Guth18} L. Guth. \newblock Restriction estimates using polynomial partitioning II. Acta Mathematica, 2018, 221(1): 81-142.
	\bibitem{GHI}
	L. Guth, J. Hickman, M. Iliopoulou. \newblock Sharp estimates for oscillatory integral operators via polynomial partitioning. Acta Mathematica, 2019, 223: 251-376.
	\bibitem{HRZ} J. Hickman,  K. Rogers, R. Zhang. Improved bounds for the Kakeya maximal conjecture in higher dimensions. arXiv preprint arXiv:1908.05589 (2019).
	\bibitem{Hor}L. H\"ormander. \newblock Oscillatory integrals and multipliers on F$L^p$. Arkiv för Matematik 11, 1973, 1: 1-11.
	\bibitem{ILX} A. Iosevich, B. Liu, Y. Xi. \newblock Microlocal decoupling inequalities and the distance problem on Riemannian manifolds.  arXiv:1909.05171 (2019).
	
	\bibitem{KSR} C. Kenig, A. Ruiz, C. Sogge. \newblock Uniform Sobolev inequalities and unique continuation for second order constant coefficient differential operators[J]. Duke Mathematical Journal, 1987, 55(2): 329-347.
	\bibitem{LS}
	S, Lee. Linear and bilinear estimates for oscillatory integral operators related to restriction to hypersurfaces.
	Journal of Functional Analysis, 2006, 241(1): 56-98.
	
	\bibitem{KL} Y. Kwon, S. Lee. \newblock Sharp resolvent estimates outside of the uniform boundedness range[J]. Communications in Mathematical Physics, 2020, 374(3): 1417-1467.
	\bibitem{EKL} J. Eunhee, Y.  Kwon, and S. Lee. \newblock Uniform Sobolev inequalities for second order non-elliptic differential operators. Advances in Mathematics, 2016, 302: 323-350.
	\bibitem{HZ} J. Hickman, J. Zahl. \newblock A note on Fourier restriction and nested Polynomial Wolff axioms[J]. arXiv preprint arXiv:2010.02251, 2020.
	\bibitem{Miya2}
	A. Miyachi. \newblock  On some singular Fourier multipliers for $H^p(R^n$). \emph{J. of  Fac. Sci. Univ.Tokyo.}, 1981, 28: 267-315.
	\bibitem{Ro}	K. Rogers. A local smoothing estimate for the Sch\"odinger equation[J]. Advances in Mathematics, 2008, 219(6): 2105-2122.
	\bibitem{stein1} E. Stein. Oscillatory integrals in Fourier analysis, in Beijing Lectures in Harmonic Analysis (Beijing,
	1984), Ann. of Math. Stud., 112, pp. 307-355. Princeton Univ. Press, Princeton,
	NJ, 1986.
	\bibitem{stein} E. Stein, T. Murphy. \newblock Harmonic analysis: real-variable methods, orthogonality, and oscillatory integrals[M]. Princeton University Press, 1993.
	
	\bibitem{Wang1} H. Wang.  A restriction estimate in $\mathbb {R}^ 3$ using brooms. arXiv preprint arXiv:1802.04312.
	\bibitem{Wu} S. Wu. On the Bochner-Riesz operator in $\mathbb {R}^ 3$.  arXiv preprint arXiv:2008.13043 (2020).
	
	
	
\end{thebibliography}

\end{document}